\numberwithin{equation}{section}
\definecolor{db}{RGB}{0, 0, 130}
\definecolor{rp}{rgb}{0.25, 0, 0.75}
\definecolor{dg}{rgb}{0, 0.6, 0}
\newtheorem{theorem}{Theorem}[section]
\newtheorem{definition}{Definition}[section]
\newtheorem{example}[definition]{Example}
\newtheorem{assumption}[theorem]{Assumption}
\newtheorem{lemma}[definition]{Lemma}
\newtheorem{proposition}[definition]{Proposition}
\newtheorem{remark}[definition]{Remark}
\def\R{\mathbb{R}}
\def\E{\mathbb{E}}
\def\Bh{\widehat{B}}
\def\Q{\mathbb{Q}}
\def\x{\times}
\def\Om{\Omega}
\def\om{\omega}
\def\Fc{\mathcal{F}}
\def\F{\mathbb{F}}
\def\P{\mathbb{P}}
\def\Xb{\overline{X}}
\def\eps{\varepsilon}
\def\Yh{\widehat{Y}}
\def\Tc{\mathcal{T}}
\def\Lh{\widehat{L}}
\def\Lc{\mathcal{L}}
\def\Yc{\mathcal{Y}}
\def\tauh{\hat \tau}
\def\taut{\tilde \tau}
\def\esssup{{\rm ess}\!\sup}
\def\essinf{{\rm ess}\!\inf}
\def \proof{{\noindent \bf Proof }}
\def \endproof{\hbox{ }\hfill$\Box$}
\title{An exit contract optimization problem}
\author{
Xihao He
\footnote{Department of Mathematics, The Chinese University of Hong Kong. xihaohe@math.cuhk.edu.hk.}
\and
Xiaolu Tan
\footnote{Department of Mathematics, The Chinese University of Hong Kong. xiaolu.tan@cuhk.edu.hk. Research supported by CUHK startup grant, and by Hong Kong RGC General Research Fund (Projects 14302921 and 14302622).}
\and
Jun Zou
\footnote{Department of Mathematics, The Chinese University of Hong Kong. zou@math.cuhk.edu.hk. Research substantially supported by Hong Kong RGC General Research Fund 
(Projects 14306921 and 14306719). }
}
\date{\today}
\begin{document}

\maketitle

\begin{abstract}

	We study an exit contract design problem, where one provides a universal exit contract to multiple heterogeneous agents, with which each agent chooses an optimal (exit) stopping time.
	The problem consists in optimizing the universal exit contract w.r.t. some criterion depending on the contract as well as the agents' exit times.
	Under a technical monotonicity condition, and by using Bank-El Karoui's representation of stochastic processes, we are able to transform the initial contract optimization problem into an optimal control problem.
	The latter is also equivalent to an optimal multiple stopping problem and the existence of the optimal contract is proved.
	We next show that the problem in the continuous-time setting can be approximated by a sequence of discrete-time ones, which would induce a natural numerical approximation method.
	We finally discuss the optimization problem over the class of all Markovian and/or continuous exit contracts.
\end{abstract}

\noindent \textbf{Key words:} Optimal stopping, stochastic optimal control,  Bank-El Karoui's representation, contract theory.

\vspace{0.5em}

\noindent\textbf{MSC2010 subject classification:} 60G40, 60G07, 49M25

\section{Introduction}

	The contract theory is an important research subject in economics and applied mathematics.
	A basic problem in the theory is about the design of a contract between the principal(s) and the agent(s),
	where the principal provides to the agent a contract (reward function) depending on the action
and/or the corresponding output of the agent, while the agent takes an optimal action according to the given contract/reward. The problem (of the principal) is then to design a best contract by taking into account the (optimal) action of the agent as well as the corresponding output and reward value paid to the agent.
	In this paper, we introduce an exit contract optimization problem with $n \ge 1$ heterogeneous agents in a stochastic context.
	Let $(\Om, \Fc, \F, \P)$ be a filtered probability space, an exit contract is mathematically an adapted reward process $Y = (Y_t)_{t \ge 0}$,
	with which each agent $i =1, \cdots, n$ solves an optimal stopping problem
	$$
		\sup_{\tau \in \Tc} ~ \E \Big[ \int_0^{\tau} f_i(t) dt + Y_{\tau} \Big],
	$$
	in order to choose an optimal exit time $\tauh_i$, where $\Tc$ represents the collection of all stopping times.
	In above, the agents share the universal contract $Y$, but each agent has individual utility/reward process $f_i$ for $i=1, \cdots, n$.
	Our problem consists in choosing an optimal contract $Y$ by considering the optimization probelm:
	$$
		\sup_{Y} ~G(Y, \tauh_1, \cdots, \tauh_n),
	$$
	where $G$ is a reward function depending on the contract $Y$ as well as the corresponding optimal stopping times $(\tauh_i)_{i=1, \cdots, n}$ of the agents.
	
	\vspace{0.5em}
	
	The above problem can be considered as a variation of the so-called principal-agent problem,
	where a principal provides a contract w.r.t. which the agent solves a stochastic optimal control problem,
	and then the principal optimizes the contract w.r.t. a reward functional depending on the contract as well as the agents' optimal control.
	In the continuous-time setting, the principal-agent problem has been studied by characterizing the optimal behavior of the agents with the first order necessary condition, see e.g. Cvitanić and Zhang \cite{CZ} and the references therein.
	More recently, a dynamic programming approach has been introduced by Sannikov \cite{Sannikov}, and then developped by Cvitanić, Possama\"i and Touzi \cite{CPT}
	with the 2nd order BSDE (backward stochastic differential equation) technique of Soner, Touzi and Zhang \cite{STZ} (see also Possama\"i, Tan and Zhou \cite{PTZ}).

	The optimal stopping mechanism can also be included in this problem, see, e.g.,
	Cvitanić, Wan and Zhang \cite{CWZ2}, Sannikov \cite{Sannikov}, Capponi and Frei \cite{CapponiFrei2015}, Hajjej, Hillairet and Mnif \cite{HHM}, and Lin, Ren, Touzi and Yang \cite{LinRenTouziYang}, in the setting with one agent.
	While the initial principal-agent problem focused on the case with one principal and one agent,
	Elie and Possama\"i \cite{ElieP}, Elie, Mastrolia and Possama\"i \cite{EMP},
	Ren, Tan, Touzi and Yang \cite{RenTTY}
	studied the case with one principal and many agents, and
	Mastrolia and Ren \cite{MastroliaRen2018}, Hu, Ren and Yang \cite{HRY} investigated
	the case with many principals and one agent.
	Kang \cite{Kang2013} {considered} the problem with several principals and several agents.
	{We like also to mention another interesting work by} Hernández Santibáñez, Possamaï and Zhou \cite{HsPossamaiZhou}, where the principal offers a family of contracts and {the agent can choose one.}
	{For the applications of the principal-agent problem in such as energy or financial market,
	we refer to, among many others,
	the work by A\"id, Possama\"i and Touzi \cite{APT}, Elie, Hubert, Mastrolia and Possama\"i \cite{EHMP},
	Euch, Mastrolia Rosenbaum and Touzi \cite{EuchMastroliaRosenbaumTouzi}, Baldacci, Possama\"i and Rosenbaum \cite{BPR}.}
	
	\vspace{0.5em}

	By adopting this language, we will also call our exit contract optimization problem the principal's problem.
	For our exit contract problem, a main difference is that each agent needs to solve an optimal stopping problem in contrast to an optimal control problem in the principal-agent problem.
	More importantly, the principal can observe the agents' optimal choice (exit time) in our problem, while the principal can not observe the agents' optimal control in the standard principal-agent problem.
	In other words, there is no moral hazard in our exit contract design problem.
	
	\vspace{0.5em}
	
	Further, in our exit contract design problem,
	although the principal has full information on the reward process $f_i$ of each agent $i=1, \cdots, n$, she/he needs to propose a universal contract $Y$ to all agents, rather than an individual contract to each agent.
	This should be considered as a constraint on the principal's problem,
	{which occurs in various applications.} Let us mention some of them below.
	\begin{itemize}
		\item The exit contract process $Y$ can represent the price process of the good/service (e.g. the electricity) sold publicly.
		In this case, the price could depend on the time, but it needs to be the same to all customers.
		
		\item In some countries, when a company plans to lay off a number of employees in a progressive way, the manager can not directly fire the chosen employees
		because of the labour union or law constraint.
		One needs to provide a time-dependent compensation plan which is identical for everyone, and then let the employees take voluntary leave.
		
		\item In some retirement {systems,} people are allowed to choose {their retirement ages in a range}.
		In particular, everyone has an individual pension basis, and the real retirement pension depends on the basis as well as a {parameter}.
		The retirement plan, i.e. how this {parameter} depends on the age, needs to be universal to all people or all groups of people.
	\end{itemize}

	Let us also mention the recent work by Nutz and Zhang \cite{NZ}, which studies an optimal design problem of the exit scheme for a (large) population of agents playing a mean-field game of  stopping under the exit scheme.
	In their context, each agent runs an identical and independent Brownian motion, and the agents are in a symmetric situation with interaction.
	For our exit contract problem, the main difference is that all agents share the same randomness in the probability space $(\Om, \Fc, \P)$ and there is no interaction between the agents.

	\vspace{0.5em}

	In the context with only one agent, the exit contract problem could become trivial as the principal can easily manipulate the exit time of the agent,
	so that it turns to be a so-called first best problem for the optimal stopping part,
	see e.g. Cvitanić, Wan and Zhang \cite{CWZ2}.
	However, in the current context with a universal exit contract for multiple heterogeneous agents, the agents may have different (optimal) exit times as their utility/reward process $(f_i)_{i = 1, \cdots, n}$ are different.
	It becomes no more trivial to manipulate directly all exit times of multiple agents as in the one agent case.
	To the best of our knowledge, this formulation (in both discrete-time and continuous-time settings) has not been studied in the literature.
	To provide a first approach, we follow the spirit of Sannikov \cite{Sannikov} to focus on the value processes of the agents, which allows decoupling the principal's and the agents' problems.
	In particular, we will apply the remarkable representation theorem of the stochastic processes due to Bank and El Karoui \cite{BankKaroui2004} (see also the recent development by Bank and Besslick \cite{Bank2018}).
	By this representation theorem,
	any optional process $Y = (Y_t)_{t \in [0,T]}$ satisfying some integrability and regularity in expectation conditions can be represented as an integral of functional of another optional process $L = (L_t)_{t \in [0,T]}$.
	More importantly, the hitting times of the process $L$ at different level provide the (minimal) solutions of a family of optimal stopping problems relying on $Y$.
	From this point of view, one can use $L$ to represent the contract $Y$ and at the same time the optimal exit times of the agents.
	It follows that, at least formally, one can decouple the initial problems and reformulate the exit contract design problem as an optimization problem over a class of processes $L$.
	Moreover, the exit design problem can be then reformulated to a multiple optimal stopping problem, where the optimal solution corresponds to the optimal stopping times of the agents with the corresponding optimal exit contract.
	From this point of view, our approach confirms that, under our technical conditions,
	our exit contract problem with multiple agents can be transformed as a first best problem (with a well chosen reward function) as in the one agent case (see more discussions in Section \ref{subsec:discussion}).

	\vspace{0.5em}

	In the continuous-time setting, some technical upper-semicontinuous condition is required on the admissible contracts $Y$ to ensure the existence of the optimal exit time of the agents.
	In the discrete-time setting, such technical condition is not required in the classical optimal stopping theory.
	We hence provide an analogue of the main results on the exit contract design problem, by developing the same techniques in the discrete-time framework.
	We can also show the convergence of the discrete-time problems to the continuous-time problem as time step tends to $0$.
	In particular, this could induce natural numerical approximation methods for the initial continuous-time exit contract problem.
	Finally, by Bank-El Karoui's representation theorem in the discrete-time setting,
	we can identify the properties of the process $L$ when $Y$ is Markovian and/or continuous w.r.t. some underlying process $X$,
	which allows us to study the contract design problem when one is restricted to choose from the class of all Markovian and/or continuous contracts.

	\vspace{0.5em}

	The rest of the paper is organized as follows.
	In Section \ref{sec:PA_continuous}, we formulate our exit contract design problem in a continuous-time framework, and provide an approach to decouple the principal's and agents' problems based on Bank-El Karoui's representation theorem.
	It is shown that the principal's problem (i.e. exit contract design problem) is equivalent to an optimal control problem or a multiple optimal stopping problem, and the existence of the optimal contract is obtained.
	Some examples and interpretations are provided in the end.
	In Section \ref{sec:PA_discrete}, we develop the analogue techniques for the discrete-time problem, and show its convergence to the continuous-time one as time step goes to $0$.
	Finally, in Section \ref{sec:Markovian}, we stay in the discrete-time setting and study the problem with Markovian and/or continuous contracts.

\section{Exit contract optimization problem in continuous-time setting}
\label{sec:PA_continuous}

	Let $(\Om, \Fc, \P)$ be a complete probability space, equipped with a filtration $\F = (\Fc_t)_{t \in [0,T]}$, for some finite $T> 0$, satisfying the usual conditions,
	i.e. the map $t \longmapsto \Fc_t$ is right-continuous, and $\Fc_0$ contains all null sets in $\Fc$.
	Let $\Tc$ denote the collection of all $\F$--stopping time taking values in $[0,T]$.
	Given $\tau \in \Tc$, we also denote
	$$
		\Tc_{\tau} ~:=~ \big\{ \theta \in \Tc ~: \theta \geq \tau \big\}.
	$$

\subsection{Mathematical formulation of the exit contract problem}
	We will formulate an exit contract design problem with $n$ agents.
	An exit contract is a $\F$-optional process $Y = (Y_t)_{t \in [0,T]}$,
	and each agent chooses to quit the contract at time $t \in [0,T]$ to receive a reward value $Y_t$.
	In the typical case where $\F$ is the (augmented) filtration generated by some c\`adl\`ag process $X = (X_t)_{t \in [0,T]}$, a contract $Y$ is a functional of the underlying process $X$.
	The contract design problem consists in choosing an optimal contract $Y$ w.r.t. a criterion depending on $Y$ as well as the agents' exit times.

\subsubsection{Agents' problem}

	Let us consider $n$ agents indexed by $i=1, \cdots, n$.
	Given a fixed contract $Y = (Y_t)_{t \in [0,T]}$, which is a $\F$-optional process of class (D) (i.e. the family $(Y_{\tau})_{\tau \in \Tc}$ is uniformly integrable),
	the agent $i$ aims at solving the following optimal stopping problem
	\begin{equation} \label{eq:AgentPb}
		V^A_i
		~:=~
		\sup_{\tau \in \Tc}~ \E \Big[ \int_{[0,\tau]} f_i(t) \mu^A(dt) + Y_{\tau} \Big],
	\end{equation}
	where $f_i : [0,T] \x \Om \longrightarrow \R$ is a given (progressively measurable) reward function, $\mu^A$ is a deterministic atomless finite Borel measure on $[0,T]$.
	Namely, the agent $i$ chooses an exit time $\tau_i$, before which she/he would receive a reward value with rate $f_i(t)$ (w.r.t. $\mu^A$),
	and at which she/he would receive the compensation $Y_{\tau_i}$.
	We assume that all agents stay in a risk-neutral context under $\P$,
	so that their optimization problems turn out to be \eqref{eq:AgentPb}.
	
	\vspace{0.5em}
	
	The optimal stopping problem \eqref{eq:AgentPb} can be solved by the classical Snell envelop approach (see the recalling in Theorem \ref{thm:SnellEnv}).
	Namely, let us denote
	$$
		G^i_t ~:=~ \int_{[0,t]} f_i(s) \mu^A(ds) + Y_t,
	$$
	and by $S^i$ the Snell envelop of $G^i$, so that, for all $\tau \in \Tc$,
	\begin{equation}\label{eq:prop_SnellEnv}
		S^i_{\tau}
		=
		\int_{[0,{\tau}]} f_i(s) \mu^A(ds)
			+ Z^{A, i}_{\tau}, ~\mbox{a.s.},
		~~\mbox{with}~
		Z^{A,i}_{\tau}
		:=
		\esssup_{\sigma \in \Tc_{\tau}}
		\E \bigg[ \int_{(\tau,{\sigma}]} f_i(s) \mu^A(ds)
		+ Y_{\sigma} \Big|\Fc_\tau \bigg].
	\end{equation}
	Then $V^A_i = \E[ S^i_0] = \E[ Z^{A,i}_0]$.

	\begin{definition}[USCE] \label{def:USCE}
		An optional process $Y$ of class $(D)$ is said to be upper-semicontinuous in expectation ($\mathrm{USCE}$) if, for any $\tau \in \Tc$ and any sequence $(\tau_n)_{n \ge 1} \subset \Tc$ satisfying either
		$$
			(\tau_n)_{n\ge 1} ~\mbox{is non-decreasing,}~
			\tau_n < \tau ~\mbox{on}~\{\tau > 0\},~\mbox{for each}~n\ge 1,
			~\lim_{n \to \infty} \tau_n = \tau,
		$$
		or
		$$
			\tau_n \ge \tau,~\mbox{a.s., for each}~ n \ge 1,~\lim_{n \to \infty} \tau_n = \tau,
		$$
		one has
		$$
			\E[Y_{\tau}] ~\geq~ \limsup_{n\to \infty}\E[Y_{\tau_n}].
		$$
	\end{definition}
	By the classical optimal stopping theory (see e.g. Theorem \ref{thm:SnellEnv}),
	when the optional process $Y$ is of class (D) and USCE in the sense of Definition \ref{def:USCE},
	the smallest optimal stopping time is given by
	\begin{equation} \label{eq:def_tauh_i}
		\tauh_i
		~:=~
		\inf \big\{ t \ge 0 ~:  S^i_t = G^i_t \big\}
		~=~
		\essinf \big\{ \tau \in \Tc ~: Z^{A,i}_{\tau} = Y_\tau, ~\mbox{a.s.} \big\}.
	\end{equation}
	We will assume the above technical condition on the admissible contract $Y$,
	so that each agent has a unique smallest optimal stopping time.

\subsubsection{The exit contract optimization problem}

	Let $\xi$ be a fixed $\Fc_T$--measurable random variable such that $\E[|\xi|] < \infty$,
	and define
	\begin{equation*}
		\Yc
		~:=~
		\big\{
			Y~\mbox{is}~\F\mbox{--optional, USCE, in class (D), and satisfies}~ Y_T \ge \xi
		\big\}.
	\end{equation*}
	Then for each $Y \in \Yc$, the optimal stopping problem \eqref{eq:AgentPb} of the agent $i$ has a unique smallest optimal stopping time $\tauh_i$ given by \eqref{eq:def_tauh_i}.
	To make the behaviour of the agents tractable, we fix $\Yc$ as the set of all admissible contracts,
	and also assume that the agents will choose to exit the contract at the smallest optimal stopping time among all optimal ones.

	\vspace{0.5em}
	
	Next, before the exit time $\tauh_i$ of agent $i$, the principal will receive a (continuous) reward value with rate $g_i(t)$
	(w.r.t a deterministic atomless finite Borel measure $\mu^P$ on $[0,T]$) due to the work of agent $i$, where $g_i: [0,T] \x \Om \longrightarrow \R$ is a progressively measurable process.
	Further, at the exit time $\tauh_i$, the principal pays the agent $i$ the compensation $Y_{\tauh_i}$.
	We also assume that the principal is risk-neutral, so that the exit contract optimization problem turns out to be
	\begin{equation} \label{eq:PrincipalPb}
		V^P
		~:=~
		\sup_{Y \in \Yc} \E \bigg[ \sum_{i=1}^n \Big( \int_{[0,\tauh_i]} g_i(t) \mu^P(dt) - Y_{\tauh_i} \Big) \bigg].
	\end{equation}

	\begin{remark} \label{rem:Y_xi}
	$\mathrm{(i)}$ The constraint $Y_T \ge \xi$ can be considered as the participation constraint in our exit contract problem.
		In particular, it ensures that the reward value $V^A_i$ of agent $i$ satisfies
		$$
			V^A_i  ~\ge~ C_i ~:=~ \E \Big[ \int_0^T f_i(t) \mu^A(dt) + \xi \Big].
		$$
		Notice that the optimal stopping times $(\tauh_i)_{i=1,\cdots, n}$ stay unchanged if the principal {replaces} the contract $(Y_t)_{t \in [0,T]}$ by $(Y_t- C)_{t \in [0,T]}$ for
		an arbitrarily big constant $C > 0$, which would make {the} reward value $V^P$ of the principal in \eqref{eq:PrincipalPb} to be $\infty$.
		The constraint $Y_T \ge \xi$ will prevent the principal to choose the contract in this way.
		
	\vspace{0.5em}
	
	\noindent $\mathrm{(ii)}$ Furthermore, the optimal exit times $(\tauh_i)_{i=1,\cdots, n}$ of the agents will not change if the principal replaces $Y$ by $Y - M$ for some martingale $M$.
	Therefore, the principal will always choose a contract $Y$ such that $Y_T = \xi$.
	Otherwise, he/she can replace $Y$ by $Y - M$ with $M_t := \E[ Y_T -\xi | \Fc_t]$ for $t \in [0,T]$ to have a better reward value.	
	For this reason, one can assume, {additionally and w.l.o.g.}, that $Y_T = \xi$ for all admissible contracts in $\Yc$.
	\end{remark}

	\begin{remark}
		One can also consider different atomless measures $\{\mu^P_i\}_{i=1}^n$ for different agents in principal's problem.
		Nevertheless, by considering a measure $\mu^P$ dominating all measures $\mu^P_i$ and then modifying the rate functions $g_i$,
		one can still reduce principal's problem to  that in \eqref{eq:PrincipalPb}.
	\end{remark}

	In the following of the paper, we make the following technical assumptions on $(f_i, g_i)_{i=1, \cdots, n}$.
	\begin{assumption}\label{assum:f}
		It holds that
		\begin{equation}\label{eq:monotone_cond}
			f_1(t, \om) < \cdots < f_n(t, \om),  ~~\mbox{for all}~(t, \om) \in [0,T] \x \Om,
		\end{equation}
		and
		$$
			\E \Big[ \int_0^T \big| f_i(t) \big| \mu^A(dt)
				+
				\int_0^T \big|g_i(t)\big| \mu^P(dt)
			\Big] < \infty,
			~~
			i = 1, \cdots, n.
		$$
	\end{assumption}

	\begin{remark}
		The monotonicity condition \eqref{eq:monotone_cond} is a technical condition due to our approach,
		which would be restrictive in practice.
		In particular, it implies that the optimal stopping times $\{\hat \tau_i\}_{i=1}^n$ are ordered, see also Remark \ref{rem:ordered_times} below.
	\end{remark}

\subsection{Solving the exit contract problem}

	We will make use of Bank-El Karoui's representation theorem (recalled in Theorem \ref{thm:BErepresentationTheorem}) to solve the principal's problem \eqref{eq:PrincipalPb}.
	As preparation, let us first interpolate the functionals $(f_i)_{i = 1, \cdots, n}$ as a functional defined on $[0,T] \x \Om \x \R$.
	By abus of notation, we denote it by $f: [0, T] \x \Om \x \R \longrightarrow \R$: $~\mbox{for all}~(t, \om) \in [0,T] \x \Om$
	\begin{align} \label{eq:f_interpolation}
		f(t, \om, \ell) ~:=~
		\begin{cases}
			f_1(t, \om) + (\ell - 1), & \ell \in (-\infty,1], \\
			(i+1 - \ell) f_i(t, \om) + (\ell - i) f_{i+1}(t, \om), ~& \ell \in [i, i+1], ~i = 1,\cdots, n-1, \\
			f_n(t, \om) + (\ell - n), &\ell \in [n,+\infty).
		\end{cases}
	\end{align}

	Then under Assumption \ref{assum:f}, it is clear that, for each fixed $\ell \in \R$,
	$$
		\E \bigg[ \int_{[0,T]} \big| f \big(t, \ell \big) \big| \mu^A(dt) \bigg] < \infty.
	$$
	Further, for each $(t, \om) \in [0,T] \x \Om$, the map $\ell \longmapsto f(t, \om, \ell)$ is continuous and strictly increasing from $-\infty$ to $+\infty$,
	and finally, $f(t, \om, i) = f_i(t, \om)$ for each $i = 1, \cdots, n$.

	\vspace{0,5em}

	Applying  Bank-El Karoui's representation theorem (Theorem \ref{thm:BErepresentationTheorem}),
	it follows that, for every contract $Y \in \Yc$, there exists an optional process $L^Y$ such that
	\begin{equation}\label{eq:BErepresentationPA}
		Y_\tau
		~=~
		\E \bigg[\xi + \int_{(\tau,T]}f\Big(t, \sup_{s\in [\tau,t)}L^Y_s \Big)\mu^A(dt) \Big| \Fc_\tau\bigg],
		~~\mbox{for all}~
		\tau \in \Tc.
	\end{equation}
	Moreover, the (smallest) optimal stopping time of each agent $i=1, \cdots, n$ can be obtained from the process $L^Y$ by
	\begin{equation} \label{eq:def_tauh_i}
		\hat \tau_i
		~=~
		\inf \big\{ t\geq 0 ~: L^Y_t \geq i \big\}
		=
		\inf \big\{ t \ge 0 ~: Y_t = Z^{A,i}_t \big\}.
	\end{equation}
	Notice that in \eqref{eq:BErepresentationPA}, there may exist {multiple} solutions $L^Y$ for a given $Y$, but their running maximum $\widehat L^Y_t := \sup_{0 \le s \le t} L^Y_s$ will be the same, so that the stopping times $\hat \tau_i$ are uniquely defined.
	Then, at least formally at this stage, one can expect to reformulate the exit contract optimization problem \eqref{eq:PrincipalPb} as an optimization problem over a class of optional processes $L$.
	Let us denote by $\Lc$ the collection of all $\F$-optional processes $L: [0,T] \x \Om \longrightarrow [0, n]$,
	and the subsets
	\begin{equation}\label{Definition:Lc}
		\Lc^+ :=
		\big\{ L \in \Lc ~: L ~\mbox{is non-dereasing} \big\},
		~~
		\Lc^+_0
		:=
		\big\{
			L \in \Lc^+ ~:
			L_t \in \{0, 1, \cdots, n \}, t \in [0,T],~\mbox{a.s.}
		\big\}.
	\end{equation}

	Before we provide the solution of the principal's problem based on the  representation in \eqref{eq:BErepresentationPA},
	we show that given any $L \in \Lc^+$, one can construct an admissible contract $Y^L \in \Yc$.

	\begin{lemma}\label{L-definedY}
		Let Assumption \ref{assum:f} hold true.
		Then for every $L \in \Lc^+$, there exists an optional process $Y^L$ such that
		\begin{equation} \label{eq:L2Y}
			Y^L_{\tau}
			~=~
			\E \Big[\xi + \int_{(\tau,T]}f\Big(t, \sup_{s\in [\tau,t)}L_s \Big)\mu^A(dt) \Big| \Fc_\tau\Big],
			~\mbox{a.s., for all}~
			\tau \in \Tc.
		\end{equation}
		Moreover, $Y^L \in \Yc$, and it has almost surely right-continuous paths.
	\end{lemma}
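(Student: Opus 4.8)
The plan is to exploit the monotonicity of $L$ to strip the dependence on $\tau$ out of the integrand, which reduces the whole construction to writing $Y^L$ as the difference of a uniformly integrable martingale and a continuous finite-variation process. \textbf{Key observation:} since $L \in \Lc^+$ is nondecreasing, for every $\om$ and every $t > \tau(\om)$ one has $\sup_{s \in [\tau, t)} L_s = \lim_{s \uparrow t} L_s =: L_{t-}$, \emph{independently of} $\tau$. Consequently, for every $\tau \in \Tc$ the integrand $f\big(t, \sup_{s\in[\tau,t)} L_s\big)$ in \eqref{eq:L2Y} agrees $\mu^A(dt)$--a.e. on $(\tau,T]$ with $f(t, L_{t-})$; here I use that $\mu^A$ is atomless, so the at most countably many times at which $L_{t-} \neq L_t$ are $\mu^A$--negligible.

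First I would set $A_t := \int_{(0,t]} f(s, L_{s-})\, \mu^A(ds)$. To see that $A$ is well defined, note that $\ell \mapsto f(t,\om,\ell)$ is increasing and $L_{s-} \in [0,n]$, so that $|f(s, L_{s-})| \le \max\big(|f(s,0)|,|f(s,n)|\big) \le |f_1(s)| + |f_n(s)| + 1$, which is $\mu^A \otimes \P$--integrable by Assumption \ref{assum:f}. Thus $A$ is a continuous (because $\mu^A$ is atomless), adapted, finite-variation process whose total variation is dominated by the integrable random variable $\int_{(0,T]}\big(|f_1(t)| + |f_n(t)| + 1\big)\, \mu^A(dt)$. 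Next I would introduce the uniformly integrable martingale $M_t := \E\big[\, \xi + A_T \mid \Fc_t\,\big]$, pass to its \cadlag\ modification, and define $Y^L_t := M_t - A_t$.

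With this definition $Y^L$ is \cadlag\ (hence optional, and in particular has a.s. right-continuous paths), and it belongs to class (D) because $M$ is a uniformly integrable martingale while $|A_t|$ is uniformly dominated by an integrable random variable. The representation \eqref{eq:L2Y} then follows from a one-line computation: for $\tau \in \Tc$, using the key observation and the $\Fc_\tau$--measurability of $A_\tau$,
\[
\E\Big[\xi + \int_{(\tau,T]} f\big(t, \textstyle\sup_{s\in[\tau,t)} L_s\big)\mu^A(dt)\,\Big|\,\Fc_\tau\Big]
= \E\big[\xi + A_T \mid \Fc_\tau\big] - A_\tau = M_\tau - A_\tau = Y^L_\tau .
\]
Taking $\tau = T$ gives $Y^L_T = \xi$, so in particular the participation constraint $Y^L_T \ge \xi$ holds.

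It remains to verify the USCE property of Definition \ref{def:USC}, and this is the only point requiring genuine care. For either of the two admissible types of approximating sequences $\tau_n \to \tau$, optional sampling applied to the uniformly integrable martingale $M$ yields $\E[M_{\tau_n}] = \E[M_0] = \E[M_\tau]$ \emph{exactly}, so the martingale part contributes nothing; meanwhile the continuity of $A$ gives $A_{\tau_n} \to A_\tau$ a.s., and dominated convergence (via the integrable bound on the total variation of $A$) gives $\E[A_{\tau_n}] \to \E[A_\tau]$. Hence $\E[Y^L_{\tau_n}] \to \E[Y^L_\tau]$, which is stronger than the required inequality $\E[Y^L_\tau] \ge \limsup_n \E[Y^L_{\tau_n}]$. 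I expect the main obstacle to lie entirely in the reduction of the key observation, namely justifying rigorously that the $\tau$--dependent running supremum collapses $\mu^A$--a.e. to the $\tau$--independent left limit $L_{t-}$; once $Y^L = M - A$ is in hand with $A$ continuous, every remaining property (class (D), right-continuity, $Y^L_T = \xi$, and USCE) is essentially automatic.
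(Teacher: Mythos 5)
Your proposal is correct and follows essentially the same route as the paper: both proofs first use the monotonicity of $L$ to replace $\sup_{s\in[\tau,t)}L_s$ by the $\tau$-independent left limit $L_{t-}$, then realize $Y^L$ as a \cadlag\ adapted process (your explicit decomposition $Y^L = M - A$ with $M$ a uniformly integrable martingale and $A$ a continuous finite-variation integral is precisely the construction the paper leaves implicit), and verify class (D) and continuity in expectation by the same domination bound $|f(t,L_{t-})| \le |f_1(t)|+|f_n(t)|+1$. The only cosmetic difference is that your USCE step isolates the martingale and finite-variation contributions separately, whereas the paper computes $\E[Y^L_{\tau_k}-Y^L_\tau]=\E\big[\int_{(\tau_k,\tau]}f(t,L_{t-})\,\mu^A(dt)\big]$ directly; these are the same estimate.
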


	\begin{proof}
	$\mathrm{(i)}$ Notice that a process $L \in \Lc^+$ is nondecreasing, so that
	\begin{equation} \label{eq:L2Y_nosup}
		\E\Big[\xi + \int_{(\tau,T]} f\Big(t, \sup_{s\in [\tau, t)} L_s \Big)\mu^A(dt) \Big| \Fc_\tau \Big]
		~=~
		\E\Big[\xi + \int_{(\tau,T]} f\big(t, L_{t-} \big)\mu^A(dt) \Big|\Fc_\tau\Big].
	\end{equation}
	Therefore, there exists a c\`adl\`ag adapted  (and hence optional) process $Y^L$ satisfying \eqref{eq:L2Y}.
	
	\vspace{0.5em}
	
	\noindent $\mathrm{(ii)}$ We next prove that $Y^L \in \Yc$. From \eqref{eq:L2Y}, we first obtain that $Y^L_T = \xi$.
	Moreover, $Y^L$ is of Class (D) since
	\begin{eqnarray*}
		|Y^L_\tau|
		&=&
		\Big|\E\Big[\xi + \int_{(\tau,T]}
		f\Big(t, \sup_{s\in [\tau,t)} L_s \Big)
		\mu^A(dt)\Big|\Fc_\tau\Big] \Big| \\
		&\leq&
		\E\Big[|\xi| + \int_{[0,T]}
		\big(|f_1(t)| + 1 + |f_n(t)|\big)
		\mu^A(dt) \Big|\Fc_\tau\Big].
	\end{eqnarray*}
	It is enough to check that $Y^L$ is USCE in order to conclude that $Y^L \in \Yc$.
	It is in fact continuous in expectation.
	Indeed, let $\{\tau_k\}_{k=1}^\infty \subset \Tc$ be such that $\tau_k \longrightarrow \tau$ as $k \to \infty$ for some $\tau \in \Tc$,
	then we have by \eqref{eq:L2Y} and \eqref{eq:L2Y_nosup}
	\begin{align*}
		\E \big[ Y^L_{\tau_k} - Y^L_\tau \big]
		~=~&
		\E\Big[\int_{(\tau_k,T]}
        f\Big(t, \sup_{s\in [\tau_k, t)}L_s \Big)\mu^A(dt) - \int_{(\tau,T]}
        f\Big(t, \sup_{s\in [\tau,t)}L_s \Big)\mu^A(dt)\Big]
		\\
		~=~&
		\E\big[\int_{(\tau_k, \tau]} f\big(t, L_{t-}  \big)\mu^A(dt) \Big]
		~\longrightarrow~
		0,
	\end{align*}
	where the last limit follows from the fact that
	$$
		\Big| f\Big(t, L_{t-}  \Big) \Big|
		~\leq~
		|f_1(t)| + |f_n(t)| + 1
	$$
	and the integrability condition on $(f_i)_{i = 1, \cdots, n}$ in Assumption \ref{assum:f}.
	\end{proof}

	\begin{theorem}\label{ReductionByBErepresentation}
		Let Assumption \ref{assum:f} hold true.
		Then the contract design problem \eqref{eq:PrincipalPb} is equivalent to
		\begin{align}\label{eq:PrincipalPb-L}
			V^P
			& ~=
			\sup_{L \in \Lc^+}
			\E\Bigg[ \sum_{i=1}^{n} \bigg(
			\int_{[0,T]}\Big( g_i(t)\mathds{1}_{\{L_{t-} < i\}}\mu^P(dt) - f\big(t,L_{t-}\big)\mathds{1}_{\{L_{t-} \geq i\}}\mu^A(dt) \Big)  - \xi \bigg) \Bigg]   \\
			&~=
			\sup_{L \in \Lc^+_0}
			\E\Bigg[\sum_{i=1}^{n} \bigg(
			\int_{[0,T]} \Big(g_i(t)\mathds{1}_{\{L_{t-} < i\}}\mu^P(dt) - f\big(t,L_{t-}\big)\mathds{1}_{\{L_{t-} \geq i\}}\mu^A(dt) \Big) - \xi \bigg)\Bigg]. \nonumber
		\end{align}
		Further, their optimal solutions are also related in the following sense:

		\vspace{0.5em}
		
		\noindent $\mathrm{(i)}$
		Let $\Lh^* \in \Lc^+$ be an optimal solution to the optimization problem at the r.h.s. of \eqref{eq:PrincipalPb-L}, then the contract $Y^{\Lh^*}$, defined below, is an optimal solution to the contract design problem \eqref{eq:PrincipalPb}:
		\begin{equation}\label{YdefinedL}
			Y^{\Lh^*}_t
			~:=~
			\E\Big[\xi + \int_{(t,T]}f\big(s, \Lh^*_{s-}\big)\mu^A(ds) \Big|\Fc_t\Big],
			~~ t \in [0,T].
		\end{equation}
		Moreover, the smallest optimal stopping times $(\tauh^*_i)_{i=1, \cdots, n}$ of the agents w.r.t. the contract $Y^{\Lh^*}$ is given by
		$$
			\tauh^*_i ~:=~ \inf \big\{ t \ge 0~:\Lh^*_t \ge i\big\}.
		$$
		
		\noindent $\mathrm{(ii)}$
		Conversely, let $Y^*$ be an optimal contract  to Problem \eqref{eq:PrincipalPb},
		and $L^{Y^*}$ be a corresponding optional process in the representation \eqref{eq:BErepresentationPA},
		let
		$$
			\Lh^{Y^*}_t
			~:=~
			\Big(0 \vee \sup_{s \in [0,t)} L^{Y^*}_s \Big)\wedge n.
		$$
		Then $\Lh^{Y^*} \in \Lc^+$ and it is an optimal solution to the optimization problem at the r.h.s. of \eqref{eq:PrincipalPb-L}.
		Moreover, the contract $\Yh$ defined by  $\Lh^{Y^*}$ through \eqref{YdefinedL} is also an optimal contract to \eqref{eq:PrincipalPb},
		and it induces the same smallest optimal stopping time $\tauh_i$ (defined in \eqref{eq:def_tauh_i}) for each agent $i = 1, \cdots, n$ as those induced by contract $Y^*$.
	\end{theorem}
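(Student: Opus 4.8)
The plan is to prove the two equalities in \eqref{eq:PrincipalPb-L} through a pair of matching computations, then to pass from $\Lc^+$ to $\Lc^+_0$ by a pointwise monotonicity argument, and finally to read off the correspondence of optimizers in $\mathrm{(i)}$ and $\mathrm{(ii)}$. Denote by $J(Y)$ and $\mathcal{K}(L)$ the objective functionals appearing in \eqref{eq:PrincipalPb} and \eqref{eq:PrincipalPb-L}, respectively. First I would show that for every $L\in\Lc^+$ one has $J(Y^L)=\mathcal{K}(L)$, where $Y^L\in\Yc$ is the contract furnished by Lemma \ref{L-definedY}. Since \eqref{eq:L2Y} is exactly of the Bank--El Karoui form \eqref{eq:BErepresentationPA} with representing process $L$, the uniqueness in Theorem \ref{thm:BErepresentationTheorem} identifies $L$ as the representing process of $Y^L$, so the smallest optimal exit times are $\hat\tau_i=\inf\{t:L_t\ge i\}$. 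As $L$ is nondecreasing, $\{L_{t-}<i\}=\{t\le\hat\tau_i\}$ and $\{L_{t-}\ge i\}=\{t>\hat\tau_i\}$, whence $\int_{[0,T]}g_i(t)\mathds{1}_{\{L_{t-}<i\}}\mu^P(dt)=\int_{[0,\hat\tau_i]}g_i(t)\mu^P(dt)$ (the endpoint being $\mu^P$--negligible, $\mu^P$ being atomless); taking expectations in \eqref{eq:L2Y} at $\tau=\hat\tau_i$, using the tower property and $\sup_{s\in[\hat\tau_i,t)}L_s=L_{t-}$, gives $\E[Y^L_{\hat\tau_i}]=\E[\xi+\int_{[0,T]}f(t,L_{t-})\mathds{1}_{\{L_{t-}\ge i\}}\mu^A(dt)]$. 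Summing over $i$ yields $J(Y^L)=\mathcal{K}(L)$, hence $V^P\ge\sup_{L\in\Lc^+}\mathcal{K}(L)$.

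The converse bound is where the main difficulty lies. Given $Y\in\Yc$ with representing process $L^Y$, I set $\Lh^Y:=(0\vee\sup_{s\in[0,\cdot)}L^Y_s)\wedge n\in\Lc^+$ and claim $J(Y)=\mathcal{K}(\Lh^Y)$. The reward term matches as before once one checks $\{\Lh^Y_{t-}<i\}=\{\sup_{s<t}L^Y_s<i\}=\{t\le\hat\tau_i\}$, using $\hat\tau_i=\inf\{t:L^Y_t\ge i\}$. The delicate point is the compensation term: \eqref{eq:BErepresentationPA} gives $\E[Y_{\hat\tau_i}]=\E[\xi+\int_{(\hat\tau_i,T]}f(t,\sup_{s\in[\hat\tau_i,t)}L^Y_s)\mu^A(dt)]$, and I must replace $\sup_{s\in[\hat\tau_i,t)}L^Y_s$ by $\Lh^Y_{t-}=\sup_{s<t}L^Y_s$. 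The key pathwise identity is that, for $t>\hat\tau_i$, $\sup_{s\in[\hat\tau_i,t)}L^Y_s=\sup_{s\in[0,t)}L^Y_s$: indeed $\sup_{s<\hat\tau_i}L^Y_s\le i$ because $L^Y_s<i$ for all $s<\hat\tau_i$, while $\sup_{s\in[\hat\tau_i,t)}L^Y_s\ge i$ because the times at which $L^Y\ge i$ accumulate at $\hat\tau_i$ from the right (or $\hat\tau_i$ itself is such a time), so the running supremum on $[\hat\tau_i,t)$ already dominates that on $[0,\hat\tau_i)$. With this identity the two expressions coincide, giving $J(Y)=\mathcal{K}(\Lh^Y)\le\sup_{L\in\Lc^+}\mathcal{K}(L)$, and combined with the previous step this establishes the first equality in \eqref{eq:PrincipalPb-L}. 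I expect this running--supremum identity, together with the care needed around whether the hitting times are attained, to be the technical crux.

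Next I would establish $\sup_{L\in\Lc^+}\mathcal{K}(L)=\sup_{L\in\Lc^+_0}\mathcal{K}(L)$; here ``$\ge$'' is immediate from $\Lc^+_0\subset\Lc^+$, so only ``$\le$'' needs work. Writing $\sum_{i=1}^n f(t,\ell)\mathds{1}_{\{\ell\ge i\}}=f(t,\ell)\lfloor\ell\rfloor$ and $\sum_{i=1}^n g_i(t)\mathds{1}_{\{\ell<i\}}=\sum_{i>\ell}g_i(t)$, the integrand of $\mathcal{K}$ depends on $\ell=L_{t-}\in[0,n]$ only through these two sums; on each interval $[k,k+1)$ the $\mu^P$--part is constant in $\ell$ while the $\mu^A$--part $-k\,f(t,\ell)$ is nonincreasing in $\ell$ (as $f(t,\cdot)$ is increasing and $k\ge 0$), so the integrand attains its maximum over $[k,k+1)$ at the left endpoint $\ell=k=\lfloor\ell\rfloor$. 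Hence, for $L\in\Lc^+$, the process $L':=\lfloor L\rfloor$ lies in $\Lc^+_0$, satisfies $L'_{t-}=\lfloor L_{t-}\rfloor$ off a countable (thus $\mu^A$-- and $\mu^P$--null) set of times, and obeys $\mathcal{K}(L')\ge\mathcal{K}(L)$ by the pointwise inequality just described, which yields the second equality in \eqref{eq:PrincipalPb-L}.

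Finally, the statements on optimizers follow by specialization. For $\mathrm{(i)}$, if $\Lh^*\in\Lc^+$ attains $\sup_{\Lc^+}\mathcal{K}$, then $Y^{\Lh^*}$ (which coincides with the contract \eqref{YdefinedL}) lies in $\Yc$ and satisfies $J(Y^{\Lh^*})=\mathcal{K}(\Lh^*)=V^P$, so it is optimal for \eqref{eq:PrincipalPb}. For $\mathrm{(ii)}$, if $Y^*$ is optimal then $J(Y^*)=V^P$, and the identity $J(Y^*)=\mathcal{K}(\Lh^{Y^*})$ of the second paragraph, together with $V^P=\sup_{\Lc^+}\mathcal{K}$, shows $\Lh^{Y^*}$ is optimal for \eqref{eq:PrincipalPb-L}; the contract $\Yh=Y^{\Lh^{Y^*}}$ then attains $J(\Yh)=\mathcal{K}(\Lh^{Y^*})=V^P$, and its exit times $\inf\{t:\Lh^{Y^*}_t\ge i\}=\inf\{t:L^{Y^*}_t\ge i\}=\hat\tau_i$ coincide with those induced by $Y^*$, as claimed.
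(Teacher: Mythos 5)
Your proposal follows essentially the same route as the paper: construct $Y^L$ from $L\in\Lc^+$ via Lemma \ref{L-definedY} and use Theorem \ref{thm:BErepresentationTheorem}$\mathrm{(ii)}$ to identify the agents' exit times as the level-hitting times of $L$ (giving $V^P\ge\sup_{\Lc^+}\mathcal{K}$); conversely truncate the Bank--El Karoui representing process to $\Lh^Y$ and use the running-supremum identity $\sup_{s\in[\hat\tau_i,t)}L^Y_s=\sup_{s\in[0,t)}L^Y_s$ on $\{t>\hat\tau_i\}$ (giving the upper bound); pass to $\Lc^+_0$ by the floor map; and read off the optimizers. One imprecision: in the converse direction you assert the \emph{identity} $J(Y)=\mathcal{K}(\Lh^Y)$, writing $\Lh^Y_{t-}=\sup_{s<t}L^Y_s$, but $\Lh^Y$ is the supremum \emph{truncated at $n$}, and on $\{t>\hat\tau_i\}$ the untruncated supremum may exceed $n$; since $f(t,\cdot)$ is increasing this only yields $J(Y)\le\mathcal{K}(\Lh^Y)$, which is exactly what the paper proves and is the direction needed. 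Your conclusions all survive (in particular, in $\mathrm{(ii)}$ the chain $V^P=J(Y^*)\le\mathcal{K}(\Lh^{Y^*})\le\sup_{\Lc^+}\mathcal{K}=V^P$ still forces optimality of $\Lh^{Y^*}$), but you should state that step as an inequality rather than an identity.
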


	\begin{remark}
		There are different ways to interpolate $(f_i)_{i = 1, \cdots, n}$ to obtain a functional $f: [0,T] \x \Om \x \R \longrightarrow \R$ as in \eqref{eq:f_interpolation}.
		Nevertheless, when we consider the processes  in $\Lc^+_0$, which take values only in $\{0, 1, \cdots, n\}$,
		it does not change the problem at the r.h.s. of \eqref{eq:PrincipalPb-L}.
	\end{remark}

	\proof \textbf{of Theorem \ref{ReductionByBErepresentation}.}
	$\mathrm{(i)}$
	Let us first prove that
	\begin{equation}\label{Inequality:reduction-leq}
		V^P
		~\leq~
		\sup_{L \in \Lc^+}
		\E\Bigg[\sum_{i=1}^{n} \bigg(
			\int_{[0,T]} \Big( g_i(t)\mathds{1}_{\{L_{t-} < i\}}\mu^P(dt) - f\big(t, L_{t-} \big)\mathds{1}_{\{L_{t-} \geq i\}}\mu^A(dt) \Big) - \xi \bigg)
		\Bigg].
	\end{equation}
	Given any $Y\in \Yc$, we denote by $L^Y$ an optional process which provides the representation \eqref{eq:BErepresentationPA},
	and then define
	$$
		\Lh^{Y}_t
		~:=~
		0 \vee \sup_{s\in [0,t)}L^Y_s \wedge n,
		~~~ t\in [0,T].
	$$
	One observes that, for each $i = 1, \cdots, n$,
	$$
		\{\tauh_i < t\}
		~\subset~
		\big\{ i \leq \Lh^{Y}_t \big \}
		~\subset~
		\{\tauh_i\leq t\},
		~~\mbox{and}~~
		\sup_{s\in [\tauh_i,t)}L^Y_s = \sup_{s\in [0,t)}L^Y_s
		~\mbox{on}~
		\{\tauh_i < t \}.
	$$
	It follows that
	\begin{align}\label{eq:tau2L}
		J^P_i (Y)
		~&:=~
		\E \Big[  \int_{[0,\tauh_i]} g_i(t) \mu^P(dt) - Y_{\tauh_i} \Big ]  \nonumber \\
		&=~
		\E \Big[
			\int_{[0,\tauh_i]} g_i(t)\mu^P(dt)
- \int_{(\tauh_i,T]}f\Big(t, \sup_{s\in [\tauh_i,t)}L^Y_s \Big) \mu^A(dt) - \xi
		\Big] \nonumber \\
        &=~
		\E \Big[
			\int_{[0,\tauh_i]} g_i(t)\mu^P(dt) - \int_{(\tauh_i,T]}f\Big(t, \sup_{s\in [0,t)}L^Y_s \Big) \mu^A(dt) - \xi
		\Big] \nonumber \\
		&\leq~
		\E \Big[
			\int_{[0, T]} \Big( g_i(t) \mathds{1}_{\{t \leq \tauh_i\}}\mu^P(dt) -  f\big(t, \Lh^{Y}_t \big)\mathds{1}_{\{t > \tauh_i\}} \mu^A(dt)\Big) - \xi
		\Big] \nonumber \\
		& = ~
		\E \Big[
			\int_{[0,T]} \Big( g_i(t) \mathds{1}_{\{\Lh^{Y}_t < i\}}\mu^P(dt) - f\big(t, \Lh^{Y}_t \big)\mathds{1}_{\{\Lh^{Y}_t \geq i\}}  \mu^A(dt)\Big) - \xi
		\Big].
	\end{align}
	Notice that $\Lh^Y \in \Lc^+$ has almost surely left continuous paths, taking the sum over $i=1, \cdots, n$,
	it follows that \eqref{Inequality:reduction-leq} holds.
	
	\vspace{0.5em}
	
	\noindent $\mathrm{(ii)}$
	We next prove the reverse inequality:
	\begin{equation}\label{Inequality:reduction-geq}
		V^P
		~\geq~
		\sup_{L \in \Lc^+}
		\E\Bigg[\sum_{i=1}^{n} \bigg(
		\int_{[0,T]}\Big(g_i(t)\mathds{1}_{\{L_{t-} < i\}}\mu^P(dt) - f\big(t, L_{t-} \big)\mathds{1}_{\{L_{t-} \geq i\}}\mu^A(dt) \Big) - \xi \bigg) \Bigg] .
	\end{equation}

	For each $L \in \Lc^+$, let $Y^L$ be the optional process given by Lemma \ref{L-definedY}, so that $Y^{L} \in \Yc$ and
	$$
		Y^{L}_\tau
		~=~
		\E\Big[\xi + \int_{(\tau,T]} f\Big(t, \sup_{s\in [\tau,t)} L_s \Big)\mu^A(dt) \Big| \Fc_\tau\Big],
		~\mbox{a.s.},
		~~\mbox{for all}~\tau \in \Tc,
	$$
	where $\sup_{s \in [\tau,t)}L_s = L_{t-}$, since $L \in \Lc^+$ admits nondereasing paths.
	Let
	$$
		\tauh_i := \inf\{t\geq 0 ~:L_t \geq i\},
	$$
	then by Bank-El Karoui's representation theorem (Theorem \ref{thm:BErepresentationTheorem}),
	$\tauh_i$ is the smallest optimal stopping time  of the $i$-th agent under the contract $Y^{L}$.
	In particular, one has
	\begin{equation}\label{eq:L2tau}
		\E \bigg[ \int_{[0,T]}f\big(t,L_{t-}\big)\mathds{1}_{\{t > \tauh_i\}}\mu^A(dt) \bigg]
		~=~
		\E \bigg[ \int_{[0,T]}f\big(t,L_{t-}\big)\mathds{1}_{\{L_{t-} \geq i\}}\mu^A(dt) \bigg].
	\end{equation}
	It follows that
	\begin{equation*}
		V^P
		~\geq~
		\sum_{i=1}^n J^P_i(Y^L)
		=
		\E\Bigg[\sum_{i=1}^{n} \bigg(
		\int_{[0,T]}\Big(g_i(t)\mathds{1}_{\{L_{t-} < i\}}\mu^P(dt) - f\big(t,L_{t-}\big)\mathds{1}_{\{L_{t-} \geq i\}}\mu^A(dt) \Big) - \xi \bigg) \Bigg],
	\end{equation*}
	and therefore the inequality in \eqref{Inequality:reduction-geq} holds.

	\vspace{0.5em}

	\noindent $\mathrm{(iii)}$
	Notice that for any $L \in \Lc^+$, we define $L^0_t := [L_t]$, where $[x]$ denotes the biggest integer less or equal to $x$.
	It is easy to verify that $L^0 \in \Lc^+$, $L^0 \leq L$ and $\{L_{t-} \geq i\} = \{L^0_{t-}\geq i\}$ for any $t\in [0,T]$, $i = 1, \cdots, n$.
	Hence we have
	\begin{align*}
		&\E\Bigg[\sum_{i=1}^{n} \bigg(
		\int_{[0,T]}\Big(g_i(t)\mathds{1}_{\{L_{t-} < i\}}\mu^P(dt) - f\big(t, L_{t-}\big)\mathds{1}_{\{L_{t-} \geq i\}}\mu^A(dt) \Big) - \xi \bigg)\Bigg] \\
		\leq~ &\E\Bigg[\sum_{i=1}^{n} \bigg(
		\int_{[0,T]}\Big(g_i(t)\mathds{1}_{\{L^0_{t-} < i\}}\mu^P(dt) - f\big(t,L^0_{t-}\big)\mathds{1}_{\{L^0_{t-} \geq i\}}\mu^A(dt) \Big) - \xi \bigg)\Bigg].
	\end{align*}
	So we complete the proof of the second equality in \eqref{eq:PrincipalPb-L}.

	\vspace{0.5em}

	\noindent $\mathrm{(iv)}$
	Given an optimal solution $\Lh^* \in \Lc^+$ to the optimization problem at the r.h.s. of \eqref{eq:PrincipalPb-L}, together with the equality \eqref{eq:L2tau},
	one has
	\begin{eqnarray*}
		V^P
		&=&
		\E\Bigg[ \sum_{i=1}^{n} \bigg(
			\int_{[0,T]}\Big(g_i(t)\mathds{1}_{\{\Lh^*_{t-} < i\}}\mu^P(dt) - f\big(t, \Lh^*_{t-} \big)\mathds{1}_{\{\Lh^*_{t-} \geq i\}}\mu^A(dt) \Big) - \xi \bigg)
		\Bigg] \\
		&=&
		\E \bigg[ \sum_{i=1}^{n} \bigg( \int_{[0,\tauh^*_i]} g_i(t) \mu^P(dt) - Y^{\Lh^*}_{\tauh^*_i} \bigg) \bigg ],
	\end{eqnarray*}
	where $\tauh^*_i$ is the smallest optimal stopping time of agent $i$ with contract $Y^{\Lh^*}$.
	Hence $Y^{\Lh^*}$ is an optimal solution to the exit contract design problem.
	
	\vspace{0.5em}
	
	Similarly, given an optimal solution $Y^*$, by using \eqref{eq:tau2L} and the representation \eqref{eq:BErepresentationPA},
	one can conclude that the corresponding $\Lh^{Y^*}$ is an optimal solution to the optimization problem at the r.h.s. of \eqref{eq:PrincipalPb-L}, $\Lh^{Y^*} \in \Lc^+$,
	and  the contract $\Yh$ defined by  $\Lh^{Y^*}$ through \eqref{YdefinedL} is also an optimal contract to \eqref{eq:PrincipalPb},
	Moreover, since	
	$\{L^{Y^*}_t \ge i\} = \{\Lh^{Y^*}_t \ge i\}$, it follows that $Y^*$ and $\Yh$ induce the same smallest optimal stopping time $\tauh_i$ for each agent $i = 1, \cdots, n$.
	\endproof

	\begin{remark} \label{rem:ordered_times}
		As observed in the above proof,
		given $L \in \Lc^+$ and the corresponding contract $Y^L$ defined in Lemma \ref{L-definedY},
		the optimal stopping time of agent $i$ is given by $\tauh_i := \inf\{t\geq 0 ~:L_t \geq i\}$.
		In particular, they are ordered in the sense that
		$$
			\hat \tau_1 \le \cdots \le \hat \tau_n.
		$$		
		In fact, given an arbitrary contract $Y \in \Yc$, one can check that the smallest optimal stopping time $\tauh_i$ of the agent $i$ in Problem \eqref{eq:AgentPb} satisfies
		\begin{equation} \label{eq:ordered_stopping}
			\tauh_1 ~\leq~ \tauh_2 ~\leq~ \cdots ~\leq~ \tauh_n,
			~
			\mbox{a.s.}
		\end{equation}
		Indeed, under Assumption \ref{assum:f} and by the definition of $Z^{A,i}$ in \eqref{eq:prop_SnellEnv}, it follows that, for all $t \in [0,T]$,
		$$
			Z^{A,1}_t ~\leq~ Z^{A,2}_t ~\leq~ \cdots ~\leq~ Z^{A,n}_t, ~\mbox{a.s.}
		$$
		Notice that $Z^{A,i}_t \ge Y_t$ for all $i = 1, \cdots, n-1$, then
		$$
			Z^{A,i+1}_t = Y_t,~\mbox{a.s.}
			~~\Longrightarrow~~
			Z^{A,i}_{t} = Y_t,~\mbox{a.s.},
		$$
		and it follows then from \eqref{eq:def_tauh_i} that $\tauh_i \le \tauh_{i+1}$, a.s. and hence \eqref{eq:ordered_stopping} holds.
	\end{remark}

	As noticed in Remark  \ref{rem:ordered_times}, the event $\{L_{t-} \ge i\}$ at the r.h.s. of \eqref{eq:PrincipalPb-L} defines a sequence of ordered stopping times.
	One can then reformulate the the exit contract design problem as a multiple stopping problem.
	More importantly, this reformulation allows us to obtain the existence of the optimal contract.

	\begin{proposition} \label{prop:P_SnellEnv}
		Let Assumption \ref{assum:f} hold true.
		
		\vspace{0.5em}
		
		\noindent $\mathrm{(i)}$
		The exit contract design problem \eqref{eq:PrincipalPb} is equivalent to the following optimal multiple stopping problem,
		with $\tau_{n+1} \equiv T$,
		\begin{equation}\label{eq:PrincipalPb_equiv}
			V^P
			~=
			\sup_{\substack{\{\tau_i\}_{i=1}^n \subset \Tc \\ \tau_1 \leq \cdots \leq \tau_n}}
			\E \bigg[\sum_{i=1}^n \Big( \int_{[0,\tau_i]} g_i(t) \mu^P(dt) - \sum_{j = i}^{n}\int_{(\tau_j,\tau_{j+1}]}f_j(t) \mu^A(dt) - \xi \Big) \bigg].
		\end{equation}

		\noindent $\mathrm{(ii)}$
		There exist stopping times $\tauh_1 \le \cdots \le \tauh_n $ which solve the optimization problem at the r.h.s of \eqref{eq:PrincipalPb_equiv}.
		Moreover, the process $\Lh^*_t := \sum_{i=1}^n \mathds{1}_{\{ \tauh_i < t \}}$ is an optimal solution to the optimization problem at the r.h.s. of \eqref{eq:PrincipalPb-L}.
		Consequently, the corresponding contract $Y^{\Lh^*}$ defined as in \eqref{YdefinedL} is an optimal contract to Problem \eqref{eq:PrincipalPb}.
	\end{proposition}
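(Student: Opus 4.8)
The plan is to handle the two assertions in sequence: part (i) is an algebraic reconciliation of the two optimisation problems, while part (ii) carries the genuine analytic content, namely existence.

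For part (i) I would start from the second equality in Theorem \ref{ReductionByBErepresentation}, which already reduces $V^P$ to a supremum over $\Lc^+_0$, and set up the objective-preserving correspondence between $\Lc^+_0$ and ordered families of stopping times: given $L \in \Lc^+_0$ put $\tau_i := \inf\{t \ge 0 : L_t \ge i\}$, and given $\tau_1 \le \cdots \le \tau_n$ put $L_t := \sum_{i=1}^n \mathds{1}_{\{\tau_i \le t\}} \in \Lc^+_0$. The two pointwise identities that drive everything are $\{L_{t-} \ge i\} = \{\tau_i < t\}$ (valid for any nondecreasing $L$ by the definition of the hitting times) and, consequently, $L_{t-} = j$ on $(\tau_j,\tau_{j+1}]$ with the conventions $\tau_0 := 0$, $\tau_{n+1} := T$. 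I would then evaluate the integrand of \eqref{eq:PrincipalPb-L} along this correspondence: the $g$-term gives $\int_{[0,T]} g_i \mathds{1}_{\{L_{t-} < i\}} \mu^P(dt) = \int_{[0,\tau_i]} g_i \mu^P(dt)$, while for the $f$-term, using $f(t,j) = f_j(t)$ from \eqref{eq:f_interpolation}, one gets $\sum_{i=1}^n f(t,L_{t-})\mathds{1}_{\{L_{t-}\ge i\}} = L_{t-} f_{L_{t-}}(t) = \sum_j j\, f_j(t)\mathds{1}_{(\tau_j,\tau_{j+1}]}(t)$, which after interchanging the finite summations ($\sum_i\sum_{j\ge i} = \sum_j j$) matches $\sum_{i=1}^n\sum_{j\ge i}\int_{(\tau_j,\tau_{j+1}]} f_j \mu^A(dt)$, the single points $\{0\}$ and $\{\tau_j\}$ being $\mu^A$-null since $\mu^A$ is atomless. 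This identifies the two objectives term by term and yields \eqref{eq:PrincipalPb_equiv}.

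For part (ii) I would first recast \eqref{eq:PrincipalPb_equiv} as a multiple optimal stopping problem with a \emph{decoupled} and continuous reward. Telescoping the $f$-term via $\int_{(\tau_j,\tau_{j+1}]} = \int_{(0,\tau_{j+1}]} - \int_{(0,\tau_j]}$ and re-indexing rewrites the objective as $\E\big[\sum_{i=1}^n \Psi^i_{\tau_i}\big] + C$ for an explicit constant $C$ and reward processes $\Psi^i_t = \int_{[0,t]} g_i \mu^P(dt) + \int_{(0,t]} h_i \mu^A(dt)$, where $h_1 = f_1$ and $h_k = k f_k - (k-1) f_{k-1}$ for $k \ge 2$. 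The structural gain is that, because $\mu^A,\mu^P$ are atomless and $g_i,f_i$ are integrable by Assumption \ref{assum:f}, each $\Psi^i$ has a.s. continuous paths and is of class (D); in particular each $\Psi^i$ is continuous in expectation, hence USCE in the sense of Definition \ref{def:USC}. I would then solve $\sup_{\tau_1 \le \cdots \le \tau_n}\E[\sum_i \Psi^i_{\tau_i}]$ by backward induction / iterated Snell envelopes, in the spirit of \cite{KobylanskiQuenezRouy-Mironescu2011}: set $\hat V^{(n+1)} \equiv 0$ and $\hat V^{(i)}_\sigma := \esssup_{\theta \in \Tc_\sigma}\E[\Psi^i_\theta + \hat V^{(i+1)}_\theta \mid \Fc_\sigma]$ for $i = n,\ldots,1$. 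The additive reward together with the constraint $\tau_i \in \Tc_{\tau_{i-1}}$ makes this the correct dynamic programming decomposition, with value $\E[\hat V^{(1)}_0]$; at each stage Theorem \ref{thm:SnellEnv} provides an optimal stopping time realised as a hitting time, and the resulting $\tauh_1 \le \cdots \le \tauh_n$ are automatically ordered since the $i$-th is chosen in $\Tc_{\tauh_{i-1}}$. The main obstacle is the regularity bookkeeping of this induction: to invoke Theorem \ref{thm:SnellEnv} at stage $i$ I need the stage reward $\Psi^i + \hat V^{(i+1)}$ to be USCE, which forces me to show that the value process $\hat V^{(i+1)}$ inherits the USCE property from the previous step. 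I expect this propagation to be the one genuinely technical lemma, closed by combining the continuity of $\Psi^i$ with the right-continuity and class-(D) property of Snell envelopes of USCE rewards.

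Finally I would close the loop. Given the optimal ordered times, set $\Lh^*_t := \sum_{i=1}^n \mathds{1}_{\{\tauh_i < t\}}$; this process is nondecreasing, integer-valued in $\{0,\ldots,n\}$ and optional, so $\Lh^* \in \Lc^+_0$, and it is left-continuous with $\Lh^*_{t-} = \Lh^*_t$ and $\{\Lh^*_{t-} \ge i\} = \{\tauh_i < t\}$. By the objective-preserving correspondence of part (i), the value of $\Lh^*$ in \eqref{eq:PrincipalPb-L} equals the multiple-stopping objective \eqref{eq:PrincipalPb_equiv} evaluated at $(\tauh_1,\ldots,\tauh_n)$, i.e. the optimum $V^P$, so $\Lh^*$ is optimal for \eqref{eq:PrincipalPb-L}. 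Theorem \ref{ReductionByBErepresentation}(i) then upgrades $\Lh^*$ to the optimal contract $Y^{\Lh^*}$ defined through \eqref{YdefinedL}, which completes the argument.
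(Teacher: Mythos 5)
Your proposal is correct and follows essentially the same route as the paper: part (i) via the same correspondence $L \leftrightarrow (\tau_i)$ with the identity $\{L_{t-}\ge i\}=\{\tau_i<t\}$, and part (ii) via the same telescoping into decoupled rewards with increments $i f_i-(i-1)f_{i-1}$, reducing to a standard multiple stopping problem with rewards continuous in expectation. The iterated-Snell-envelope induction you sketch (including the propagation of continuity in expectation through the value processes) is precisely how the paper's Theorem \ref{thm:OptimalMultiStoppingRecall} is proved in the appendix, where the propagation step is closed by citing Propositions 1.5 and 1.6 of \cite{KobylanskiQuenezRouy-Mironescu2011}.
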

	\begin{proof}
	\noindent $\mathrm{(i)}$ Given any sequence $\{\tau_i\}_{i = 1}^n$ of stopping times satisfying $\tau_1 \leq \cdots \leq \tau_n$,
	one can define an optional process $L \in \Lc^+_0$ by $L_t := \sum_{i = 1}^{n}\mathds{1}_{\{\tau_i < t\}}$.
	On the other hand, given $L \in \Lc^+_0$, one can define a sequence of ordered stopping times by $\tau_i := \inf\{t\geq 0:L_{t-} \geq i\}$, $i=1, \cdots, n$.
	One can then obtain \eqref{eq:PrincipalPb_equiv} from \eqref{eq:PrincipalPb-L}, together with the equality
	\begin{eqnarray*}
		&&
		\int_{[0,T]} g_i(t)\mathds{1}_{\{L_{t-} < i\}}\mu^P(dt) - f\big(t,L_{t-}\big)\mathds{1}_{\{L_{t-} \geq i\}}\mu^A(dt) \\
		&=&
		\int_{[0,\tau_i]} g_i(t) \mu^P(dt) - \sum_{j = i}^{n}\int_{(\tau_j,\tau_{j+1}]}f_j(t) \mu^A(dt).
	\end{eqnarray*}
	
	\noindent $\mathrm{(ii)}$ By arranging the terms at the r.h.s. of \eqref{eq:PrincipalPb_equiv}, it follows that
	\begin{align*}
		~ &  ~
		\E \bigg[\sum_{i=1}^n \Big(
		\int_{[0,\tau_i]}
		g_i(t) \mu^P(dt)
		- \sum_{j = i}^{n}\int_{(\tau_j,\tau_{j+1}]}
		f_j(t) \mu^A(dt)
		- \xi \Big) \bigg]
		\\
		~ = & ~
		\E \bigg[\sum_{i=1}^n \Big(
		\int_{[0,\tau_i]}
		\Big(g_i(t) \mu^P(dt) + \big(if_i - (i - 1)f_{i - 1}\big)(t)
             \mu^A(dt)\Big)
			-
			\int_{[0,T]}
			f_n(t)
			\mu^A(dt)
			- \xi \Big) \bigg].
	\end{align*}
	The stopping time at the r.h.s. of \eqref{eq:PrincipalPb_equiv} is then equivalent to the optimal multiple stopping problem:
	\begin{equation}\label{equiv:MultiOptimalStopping}
			\sup_{\substack{\{\tau_i\}_{i=1}^n \subset \Tc \\ \tau_1 \leq \cdots \leq \tau_n}}
			\E \bigg[\sum_{i=1}^n
                \int_{[0,\tau_i]}
                    \Big(g_i(t) \mu^P(dt) + \big(if_i - (i - 1)f_{i - 1}\big)
                        (t) \mu^A(dt)\Big)
		 \bigg].
	\end{equation}
	At the same time, it is easy to check that the continuity (in expectation) of the mapping
	$$
		t \longmapsto \E \bigg[
		\int_{[0,\tau_i]}
                    \Big(g_i(t) \mu^P(dt) + \big(if_i - (i - 1)f_{i - 1}\big)
                        (t) \mu^A(dt)\Big)
		\bigg].
	$$
	Then it is enough to apply Theorem \ref{thm:OptimalMultiStoppingRecall} to prove the existence of the optimal stopping times $\{\tauh_i\}_{i = 1}^n \subset \Tc$.
	\end{proof}

	\begin{remark}
		The multiple stopping problem has been studied in the classical literature such as in Carmona and Touzi \cite{CarmonaTouzi},
		Kobylanski, Quenez and Rouy-Mironescu \cite{KobylanskiQuenezRouy-Mironescu2011}.
		A main difference of the multiple stopping problem in \eqref{eq:PrincipalPb_equiv} is that the stopping times are required to be ordered.
	\end{remark}

\subsection{Further discussions and examples}
\label{subsec:discussion}

	Our exit contract design problem shares some main features with the classical principal-agent problem (as studied in \cite{Sannikov}) since both problems optimize over a class of contracts.
	However, there would be some structural differences between the two problems:

	\begin{itemize}
		\item For a classical principal-agent problem, a basic structure is the following:
	the agent makes an action $a$, which induces an output $X^a$, and the contract $\xi$ is a function of the output variable $X^a$.
	In this setting, there exist two situations: the principal observes both agent's action $a$ and the output $X^a$, or the principal observes only $X^a$.
	According to the two different situations, the principal's problem would be the so-called first best problem, or the second best problem.

		\item For our exit contract design problem, the agent makes an action $\tau$, and the contract $\xi$ is a function of $(\tau, X)$ with some underlying observable process $X$.
	There would be only one situation: the principal observes the agent's action $\tau$ so that both can agree with the payoff $\xi (\tau, X)$ paid by the principal.
	
	\end{itemize}
	
	For a standard principal agent problem, where the principal can provide an individual contract to the agent,
	it is well-known that the principal's problem can be reduced to the so-called first best problem, i.e. the principal controls directly the action of the agent in order to optimize an appropriate reward function.
	This is also the case for the exit contract design problem in the setting with one agent, see e.g. \cite{CWZ2}.
	However, it becomes much less obvious in our setting where the principal needs to provide a universal contract to multiple heterogeneous agents.
	Our result in Proposition \ref{prop:P_SnellEnv} shows that, under the monotone condition in Assumption \ref{assum:f},
	one can give an order to different agents and then find an appropriate reward function (as that at the r.h.s. of \eqref{eq:PrincipalPb_equiv})
	so that the initial problem reduces to a first best type optimization problem over the sequences of ordered stopping times.
	We have found such a reward function thanks to the approach in Theorem \ref{ReductionByBErepresentation} based on the Bank-El Karoui's representation theorem.
	
	\vspace{0.5em}
	
	Nevertheless, this seems not to be the feature of our problem without the monotone condition in Assumption \ref{assum:f}.
	Indeed, without the monotone condition, there may be different ways to index the agents, and there is no reason that the optimal stopping times of the agents are ordered.
	Moreover, we show in the following example that, without the monotone condition,
	the problem is not equivalent to the first best problem if one uses the r.h.s. of \eqref{eq:PrincipalPb_equiv} as the reward function.

	\begin{example} \label{exam:not_first_best}
{\rm
		Let us consider a deterministic setting with $2$ agents, where  $T= 3$, $\xi = 0$,
		and $\mu^A(dt) = \mu^P(dt) = \delta_{0}(dt) + \delta_{1}(dt) + \delta_{2}(dt)$, so that the problem can be considered as a discrete-time one on the grid $\{0, 1, 2, 3 \}$.
		Let
		$$
			f_1(0) = f_2(0) = 0,
			~~f_1(1) = 1, ~~f_2(1) = 2,
			~~f_1(2) = 2, ~~f_2(2) = 1,
			~~f_1(3) =f_2(3) = 0,
		$$
		and
		$$
			g_1(0) = g_2(0) = 0,
			~~g_1(1) = 1, ~~g_2(1) = -4,
			~~g_1(2) = -\frac{5}{2},~~g_2(2) = \frac{1}{2},
			~~
			g_1(3) = g_2(3) = 0.
		$$
		In this deterministic setting, the stopping times become deterministic times taking values in $\{0, 1, 2, 3\}$,
		and as the natural extension of the r.h.s. of \eqref{eq:PrincipalPb_equiv}, one can guess that the corresponding first best problem would be
		$$
			V_1 = \sup_{\tau_1 \le \tau_2}
			J_1(\tau_1, \tau_2),
			~\mbox{with}~
			J_1(\tau_1, \tau_2) := \Big(
				\sum_{k=0}^{\tau_1} g_1(k) + \sum_{k=0}^{\tau_2} g_2(k)
				- \sum_{k=\tau_1+1}^{\tau_2} f_1(k)
				- 2\sum_{k=\tau_2+1}^3 f_2(k)
			\Big),
		$$
		or
		$$
			V_2 = \sup_{\tau_2 \le \tau_1} J_2(\tau_2, \tau_1),
			~\mbox{with}~
			J_2(\tau_2, \tau_1) := \Big(
				\sum_{k=0}^{\tau_1} g_1(k) + \sum_{k=0}^{\tau_2} g_2(k)
				- \sum_{k=\tau_2+1}^{\tau_1} f_2(k)
				- 2\sum_{k=\tau_1+1}^3 f_1(k)
			\Big).
		$$
		By a direct computation, we have
		$$
			V_1 \ge J_1(1,2) = - \frac92,
			~~\mbox{and}~
			V_2 \ge J_2(0,2) = - \frac92.
		$$
		In fact, by considering all (finitely) possible values of $(\tau_1, \tau_2)$, one can check that $V_1 = V_2 = -\frac{9}{2}$.
		
		\vspace{0.5em}
		
		Next, let us consider the exit contract optimization problem $V^P$ in \eqref{eq:PrincipalPb}.
		In our deterministic setting, it is enough to consider all possible values of $(Y_0, Y_1, Y_2) \in \R^3$ as $Y_3 = \xi = 0$ (see Remark \ref{rem:Y_xi}).
		Let us define
		\begin{align*}
		&
		D^1_0
		~ := ~
		\{(y_0,y_1, y_2) \in \R^3 : y_0 \geq \max\{1 + y_1, 3 + y_2, 3\}\}, \\
		&
		D^1_1
		~ := ~
		\{(y_0,y_1, y_2) \in \R^3  :  y_1 \geq \max\{2 + y_2, 2\}, ~ y_1 > y_0 - 1\}, \\
		&
		D^1_2
		~ := ~
		\{(y_0,y_1, y_2) \in \R^3 :  y_2 \geq 0, ~ 3 + y_2 > \max\{y_0, 1 + y_1 \}\},
		\\
		&
		D^2_0
		~ := ~
		\{(y_0,y_1, y_2) \in \R^3 :  y_0 \geq \max\{2 + y_1, 3 + y_2, 3\}\},
		\\
		&
		D^2_1
		~ := ~
		\{(y_0,y_1, y_2) \in \R^3 :  y_1 \geq \max\{1 + y_2, 1\}, ~ y_1 > y_0 - 2\},
		\\
		&
		D^2_2
		~ := ~
		\{(y_0,y_1, y_2) \in \R^3 : y_2 \geq 0, ~ 3 + y_2 > \max\{y_0, 2 + y_1\}\}.
        \end{align*}		
	One can easily check that, whenever $(Y_0, Y_1, Y_2) \in D^1_0$ (resp. $D^1_1$, $D^1_2$), one has $\tauh_1 = 0$ (resp. $1$, $2$),
	and whenever $(Y_0, Y_1, Y_2) \in D^2_0$ (resp. $D^2_1$, $D^2_2$), one has $\tauh_2 = 0$ (resp. $1$, $2$).
	For $i, j = 0,1,2$, we define
        \begin{align*}
            V^P(i, j)
    		~:=~
    		\sup_{Y \in D^1_i \cap D^2_j}
            \Big( \sum_{k = 1}^i g_1(k)
            +
            \sum_{k = 1}^j g_2(k)
            - Y_i - Y_j
            \Big),
        \end{align*}
        so that
        $$
        	V^P = \max_{0 \le i, j \le 2} V^P(i,j).
        $$
        By a direct computation, one has
        \begin{align*}
            V^P(0,0) = -6,
            ~
            V^P(0,1) = -8,
            ~
            V^P(1,1) = -7,
            ~
            V^P(2,1) = -\frac{13}{2},
            ~
            V^P(2,2) = -5,
        \end{align*}
        and the value of $V^P(i,j)$ not listed above is $-\infty$.
        Therefore, $V^P = V^P(2,2) = -5$, and one optimal contract would be $(\widehat Y_0, \widehat Y_1, \widehat Y_2, \widehat  Y_3) = (0, 0, 0, 0)$.

        \vspace{0.5em}

	In the above example, we observe that $V^P < V_1 = V_2$, and $V_1$ or $V_2$ can not be the corresponding first best problem.
	At the same time, it seems not clear to us how to formulate an appropriate first best problem for $V^P$ in this setting.
}
	\end{example}

	We next provide an example with explicit solutions to the principal's and agents' problems (under the monotone condition),
	which could also illustrate the structure of our exit contract design problem.

	\begin{example}
{\rm
		Let $n=2$, $T= 1$, $\xi = 0$, $\mu^A(dt) = \mu^P(dt)$ be the Lebesgue measure, $f_1(t) \equiv 1$, $f_2(t) \equiv 2$,
		and $g_1, g_2: [0,T] \longrightarrow \R$ are both deterministic functions.
		Let us define $h_2: [0,1] \longrightarrow \R$, and for each $\tau_2 \in [0,1]$, define $h_1(\tau_2, \cdot) : [0, \tau_2] \longrightarrow \R$ by
		$$
			h_2(t) ~:= \int_t^1 f_2(s) ds ~=~ 2-2t,
			~~\mbox{for all}~t \in [0,1],
		$$
		and
		$$
			h_1(\tau_2, t) ~:=~ h_2 (\tau_2) + \int_t^{\tau_2} f_1(s) ds ~=~ h_2(\tau_2) + \tau_2 - t,
			~~\mbox{for all}~t \in [0, \tau_2].
		$$
		Then the r.h.s. of \eqref{eq:PrincipalPb_equiv} becomes
		\begin{equation} \label{eq:exam_tauh}
			\max_{0 \le \tau_1 \le \tau_2 \le 1}
			~\int_0^{\tau_1} g_1(s) ds + \int_0^{\tau_2} g_2(s) ds - h_1(\tau_2, \tau_1) - h_2(\tau_2).
		\end{equation}
		In this deterministic setting, given the functions $f_i$ and $g_i$, one can easily compute an optimizer $(\tauh_1, \tauh_2)$ for \eqref{eq:exam_tauh}.
		Moreover, by Proposition \ref{prop:P_SnellEnv}, an optimal exit contract for the principal becomes
		$$
			\widehat Y_t = h_1(\tauh_2,t)  \mathds{1}_{\{t = \tauh_1\}} + h_2(t)  \mathds{1}_{\{t = \tauh_2\}},
		$$
		and an optimizer $\widehat L$ for \eqref{eq:PrincipalPb_equiv} can be given by
		$$
			\widehat L_t ~:=~ 0 \x \mathds{1}_{\{0 \le t < \tauh_1\}} + 1 \x \mathds{1}_{\{\tauh_1 \le t < \tauh_2\}} + 2 \x \mathds{1}_{\{\tauh_2 \le t \le T\}},
			~~\mbox{for all}~ t\in [0,T].
		$$

}	
	\end{example}		
		
		Let us provide some interpretations of the functions $h_1$, $h_2$ as well as  problem \eqref{eq:exam_tauh}.
		First, under the monotone condition that $f_1 < f_2$, it is known that the smallest optimal stopping time of Agent $1$ will be smaller than that of Agent $2$ (see Remark \ref{rem:ordered_times}).
		Then, for each $t \in [0,1]$, $h_2(t)$ represents the cumulative reward value that  Agent $2$ expects to receive from time $t$ if he/she chooses not to stop before $T$.
		Thus to encourage Agent $2$ to stop at time $t < T$, the principal should provide at least a compensation value $Y_t = h_2(t)$.
		Next, {depending on} the exit time $\tauh_2$ of Agent 2 and the contract value $Y_{\tauh_2} = h_2(\tauh_2)$,
		the value $h_1(\tauh_2, t)$ denotes the cumulative reward value that Agent $1$ expects to receive from time $t$ if he/she chooses not to stop before $\tauh_2$.
		Therefore, to make Agent $1$ stop at time $t < \tauh_2$, the principal should provide at least a compensation value $Y_t = h_1(\tauh_2, t)$.
		It follows that the principal's optimal exit contract problem turns to be equivalent to \eqref{eq:exam_tauh}.
		
		\vspace{0.5em}
		
		Finally, given an optimal solution $\tauh_1 \le \tauh_2$ of \eqref{eq:exam_tauh}, one can further find an increasing function $L: [0,1] \longrightarrow \{0, 1, 2\}$ such that the hitting time of $L$ to the level $1$ (resp. $2$) is the time $\tauh_1$ (resp. $\tauh_2$),
		which is in fact a solution to the optimization problem at the r.h.s. of \eqref{eq:PrincipalPb_equiv}.

	\begin{remark}
		In view of the above interpretation of $h_1$ and $h_2$, one can in fact provide a direct proof of Proposition \ref{prop:P_SnellEnv} based on a Snell envelop type argument (in a general stochastic setting).
		We choose to first prove Theorem \ref{ReductionByBErepresentation} by the arguments based on Bank-El Karoui's representation theorem, and then to deduce Proposition \ref{prop:P_SnellEnv}, for the following reasons.
		First, from a numerical point of view, the formulation in \eqref{eq:PrincipalPb-L} is a quite standard optimal control problem,
		which {can be solved numerically by the control techniques}.
		We will also develop this point of view and provide a convergence result in Section \ref{subsec:convergence}.
		More importantly, the formulation in \eqref{eq:PrincipalPb-L} and the corresponding approach
		are more flexible when one restricts the initial exit contract problem to a subclass of contract, such as the Markovian contract, and/or continuous contracts w.r.t. some underlying processes.
		We will develop this further in Section \ref{sec:Markovian}, see in particular Remark \ref{rem:no_stopping_formulation} for more discussions.
	\end{remark}

	Finally, let us conclude the subsection by another example which highlights the role of the contract $Y$ in incentivizing agents.

	\begin{example}
		Let us consider a stochastic setting with $n=2$ agents, terminal time $T < \infty$, $\xi = 0$, and $\mu^A(dt) = \mu^P(dt) = dt$.
		Let $B$ be a standard Brownian motion, the running rewards $f_1$ and $f_2$ of the two agents are given by
		\begin{equation*}
			f_1(t) := 1 - 2 \Bh_t - t,
			~~~
			f_2(t) := 2 - \Bh_t - t,
			~\mbox{with}~
			\Bh_t := \sup_{0 \le s \le t}|B_s|,
			~\mbox{for all}~
			t \in [0,T].
		\end{equation*}
		Notice that $f_1$ and $f_2$ clearly satisfy the monotone condition \eqref{eq:monotone_cond}.

		\vspace{0.5em}

		In the case without incentive contract, each agent $i=1, 2$ solves the following optimal stopping problem:
		\begin{equation*}
			\sup_{\tau \in \Tc}\E\bigg[\int_{0}^{\tau}f_i(t)dt\bigg].
		\end{equation*}
		Notice that both $t \longmapsto f_1(t)$ and $t \longmapsto f_2(t)$ are strictly decreasing,
		then the unique optimal stopping time $\taut_i$ for agent $i =1, 2$ will be the first time that $f_i(t)$ becomes negative,
		that is,
		\begin{equation*}
			\taut_1 = \inf\{t \ge 0: 2|B_t| + t \ge 1\} \wedge T,
			~~~
			\taut_2 = \inf\{t \ge 0: |B_t| + t \ge 2\} \wedge T.
		\end{equation*}

		Next, we analyze the behavior of the agents when the principal provides a contract,
		which is also optimized w.r.t. the principal's utility functions $g_1$ and $g_2$.
		By Proposition \ref{prop:P_SnellEnv}, it follows that:
		\begin{align} \label{eq:ineq_optimal_contract}
			V^P
		~ & = ~
            \sup_{\substack{\tau_1, \tau_2 \in \Tc \\ \tau_1 \leq \tau_2}}
            \E\bigg[
            \bigg(\int_{0}^{\tau_1}g_1(t)dt - \int_{\tau_1}^{\tau_2}f_1(t)dt - \int_{\tau_2}^{T}f_2(t)dt\bigg)
            +
            \bigg(\int_{0}^{\tau_2}g_2(t)dt - \int_{\tau_2}^{T}f_2(t)dt\bigg)\bigg]
		\nonumber \\ ~ & = ~
            \sup_{\substack{\tau_1, \tau_2 \in \Tc \\ \tau_1 \leq \tau_2}}
            \E\bigg[
            \int_{0}^{\tau_1}\bigg(g_1(t) + 1 - 2\Bh_t - t\bigg)dt
            +
            \int_{0}^{\tau_2}\big(g_2(t) + 3 - t\big)dt
            \bigg]
            +
            2\E\bigg[\int_{0}^{T}\Bh_tdt\bigg]
            -
            4T.
		\nonumber \\ ~ & \le ~
            \sup_{\tau_1 \in \Tc}\E\bigg[
            \int_{0}^{\tau_1}\big(g_1(t) + 1 - 2\Bh_t - t\big)dt\bigg]
            +
            \sup_{\tau_2 \in \Tc}\E\bigg[
            \int_{0}^{\tau_2}\big(g_2(t) + 3 - t\big)dt
            \bigg]
		\nonumber \\ & ~~~~~~~~~~~~~~~~~~~~~~~~~~~~~~~~~~~~~~~~~~~~~~~~~~~~~~~~~~~~~~~~~~~~
            +
            2\E\bigg[\int_{0}^{T}\Bh_tdt\bigg]
            -
            4T.
		\end{align}
		
       		Let  $\tauh_1, \tauh_2 \in \Tc$ be two stopping times solve respectively the optimal stopping problems
		\begin{equation}\label{eq:Principal_stopping_problem_separate}
			\sup_{\tau_1 \in \Tc}\E\bigg[
			\int_{0}^{\tau_1}\big(g_1(t) + 1 - 2\Bh_t - t\big)dt\bigg],
			~~~
			\sup_{\tau_2 \in \Tc}\E\bigg[
			\int_{0}^{\tau_2}\big(g_2(t) + 3 - t\big)dt
			\bigg].
		\end{equation}
		Assume in addition that $\tauh_1 \le \tauh_2$, a.s.
		Then the inequality in \eqref{eq:ineq_optimal_contract} becomes an equality,
		and $(\tauh_1, \tauh_2)$ is an optimal solution to $V^P$.
		Further, by \eqref{YdefinedL}, it leads to the optimal contract $Y^*$ given by
		\begin{equation}\label{eq:example_optimal_contract}
			\begin{split}
			Y^*_t
			~ = & ~
			\E\bigg[
			\int_{t}^{T}\Big(f_1(s)\mathds{1}_{[0,\tauh_2)}(s)
                        + f_2(s)\mathds{1}_{[\tauh_2,T]}(s)\Big) ds
                        - (\tauh_1 - t)^+ \Big|\Fc_t
			\bigg]
			\\ ~ = & ~
			\E\bigg[
			\int_{t}^{T}\Big((1 - 2\Bh_s - t)\mathds{1}_{[0,\tauh_2)}(s)
                        + (2 - \Bh_s - t)\mathds{1}_{[\tauh_2,T]}(s)\Big) ds
                        - (\tauh_1 - t)^+ \Big|\Fc_t
			\bigg].
			\end{split}
		\end{equation}
		Moreover, $\tauh_i$ is the optimal stopping time of the agents $i =1, 2$ with the contract $Y^*$.

		\vspace{0.5em}
		
		In the following, let us consider different examples of the principal's utility function $g_1$ and $g_2$,
		which leads to different optimal contract $Y^*$ as well as different optimal stopping times $\tauh_1$ and $\tauh_2$ of the agents.
		
		\begin{enumerate}
			\item Let $g_1(t) := 2\Bh_t + t$, $g_2 (t):= 1 + t$ for all $t \in [0,T]$.
			Then by \eqref{eq:Principal_stopping_problem_separate} and \eqref{eq:example_optimal_contract},
			the optimal contract is
			$$
				Y^*_t := -\E \Big[\int_{t}^{T}(2\Bh_s+ s)ds \Big|\Fc_t \Big],
				~~t \in [0,T],
			$$
			and the two optimal stopping problems in \eqref{eq:Principal_stopping_problem_separate} reduce to
			\begin{equation*}
				\sup_{\tau_1 \in \Tc}\E[\tau_1],
				~~~
				\sup_{\tau_2 \in \Tc}\E[4\tau_2],
			\end{equation*}
			so that the optimal stopping times $\tauh_1$ and $\tauh_2$ for the agents are given by
			\begin{equation*}
				\tauh_1 ~=~ \tauh_2 ~=~ T.
			\end{equation*}
			One observes that, comparing to the optimal stopping times $\taut_1$ and $\taut_2$ in the setting without contract,
			the contract incentivizes the agents to work for a longer period (until the terminal time in fact).

			\item Let $g_1(t) := -2 + 2\Bh_t + t$, $g_2(t) := -4 + t$ for $t \in [0,T]$.
			By similar computation, one obtains the principal's optimal contract:
			$$
				Y^*_t := \E \Big[\int_{t}^{T}(2 - \Bh_s - s)ds \Big|\Fc_t \Big],
				~~
				t \in [0,T],
			$$
			and the optimal stopping problems in \eqref{eq:Principal_stopping_problem_separate} reduce to
			\begin{equation*}
				\sup_{\tau_1 \in \Tc}\E[-\tau_1],
				~
				\sup_{\tau_2 \in \Tc}\E[-\tau_2],
			\end{equation*}
			so that the optimal stopping times $\tauh_i$ of the agent $i =1 ,2$ are given by
			\begin{equation*}
				\tauh_1 = \tauh_2 = 0.
			\end{equation*}
			In this setting, the contract incentivise the agent to work for a shorter period (stop immediately at the beginning in fact).
			
			\item Let $g_1 := 1 - 4\Bh - 2t$, $g_2 := 3 - 2\Bh - t$.
			The two optimal stopping problems in \eqref{eq:Principal_stopping_problem_separate} reduce to
			\begin{equation*}
				\sup_{\tau_1 \in \Tc}\E \Big[\int_{0}^{\tau_1}(2 - 6\Bh_t - 3t)dt \Big],
				~~~
				\sup_{\tau_2 \in \Tc}\E\Big [\int_{0}^{\tau_2}(6 - 2\Bh_t - 2t)dt \Big] .
			\end{equation*}
			and similarly, the optimal stopping times of the agents are given by
			\begin{equation*}
				\tauh_1 = \inf \Big\{t \ge 0: 2|B_t| + t \ge \frac{2}{3} \Big\}\wedge T,
				~~~
				\tauh_2 = \inf \Big\{t \ge 0: |B_t| + t \ge 3 \Big\} \wedge T.
			\end{equation*}
			Namely, comparing to $\taut_1$ and $\taut_2$ in the case without contract, the incentive contract makes the agent $i=1$ to work for a shorter period, and the agent $i=2$ to work for a longer period.

		\end{enumerate}
	\end{example}

\section{A discrete-time version and its convergence}
\label{sec:PA_discrete}

	We now study a discrete-time version of the exit contract design problem, and provide some analogue results to those in the continuous-time setting.
	In particular, the USCE technical condition is no more required in  the discrete-time setting to define the admissible contracts.
	We next prove its convergence to the continuous-time problem as the time step goes to $0$.

\subsection{A discrete-time version of the exit contract problem}

	Let us consider a partition $\pi = (t_j)_{0 \le j \le m}$ of the interval $[0,T]$, i.e. $0 = t_0 < t_1 < \cdots < t_m = T$,
	and study the exit contract design problem formulated on the discrete-time grid $\pi$.
	We will in fact embed the discrete-time problem into the continuous-time setting,
	and reformulate it as a continuous-time problem by considering piecewise constant processes.

	\vspace{0.5em}

	We stay in the same probability space setting as in the continuous-time case,
	i.e. $(\Om, \Fc, \P)$ being a complete probability space, equipped with the filtration $\F = (\Fc_t)_{t \in [0,T]}$ satisfying the usual conditions.
	We next define the filtration $\F^{\pi} = (\Fc^{\pi}_t)_{t \in [0,T]}$ by
	\begin{equation} \label{eq:def_Fpi}
		\Fc^{\pi}_t := \Fc_{t_j},~~\mbox{for all}~t \in [t_j, t_{j+1}),~~j=0, \cdots, m-1,
		~~\mbox{and}~
		\Fc^{\pi}_T = \Fc_T,
	\end{equation}
	and $\Tc^{\pi}$ denote the collection of all $\F^{\pi}$-stopping times taking values in $\pi$.
	Next, let
	\begin{equation} \label{eq:def_Ycpi}
		\Yc^{\pi}
		:=
		\big\{
		Y~\mbox{is}~\F^{\pi}\mbox{--adapted s.t.}~
		Y_t = Y_{t_j},~t \in [t_j, t_{j+1}),~\E[|Y_{t_j}|] < \infty,~j=0, \cdots, m,
		~\mbox{and}~Y_T \ge \xi
		\big\},
	\end{equation}
	and
	$$
		\mu^{A, \pi} (dt) = \sum_{j=0}^m c^A_j \delta_{t_j}(dt),
		~~\mbox{for some constants}~ c^A_j > 0, ~j=0, \cdots, m.
	$$
	Given a discrete-time contract $Y \in \Yc^{\pi}$, the agents' optimal stopping problems are given by

	\begin{equation} \label{eq:VApi}
		V^{A, \pi}_i
		~:=~
		\sup_{\tau \in \Tc^\pi}
		~
        \E \Big[ \int_{[0,\tau]} f_i(t) \mu^{A, \pi}(dt) + Y_{\tau} \Big],
        ~i = 1,\cdots, n,
	\end{equation}
	whose Snell envelop is also a piecewise constant process,
	so that the minimum optimal stopping time for the $i$-th agent is given by
	$$
		\tauh_i
        ~:=~
        \essinf \big\{ \tau \in \Tc^\pi ~:
        Z^{A,\pi, i}_{\tau} = Y_\tau, ~\mbox{a.s.} \big\}
	$$
	and takes only values in $\pi$ (i.e. $\tauh_i \in \Tc^{\pi}$), where, as in \eqref{eq:prop_SnellEnv},
	$$
		Z^{A, \pi, i}_{\tau}
		~:=~
		\esssup_{\sigma \in \Tc^\pi_{\tau}}
		\E \Big[ \int_{(\tau,{\sigma}]} f_i(s) \mu^{A, \pi} (ds)
		+ Y_{\sigma} \Big|\Fc_\tau \Big].
	$$
	Similarly, let
	$$
		\mu^{P, \pi} (dt) = \sum_{j=0}^m c^P_j \delta_{t_j}(dt),
		~~\mbox{for some constants}~ c^P_j > 0, ~j=0, \cdots, m.
	$$
	The contract design problem is given by
	\begin{equation} \label{eq:PbP_pi}
		V^{P,\pi}
		~:=~
		\sup_{Y \in \Yc^{\pi}} \E \bigg[ \sum_{i=1}^n \Big( \int_{[0,\tauh_i]} g_i(t) \mu^{P,\pi}(dt) - Y_{\tauh_i} \Big) \bigg].
	\end{equation}
	Recall that $\Lc$, $\Lc^+$ and $\Lc^+_0$ are defined above and in \eqref{Definition:Lc},
	we further define $\Lc^{\pi}$ as the set of all $\F^{\pi}$-optional processes $L: [0,T] \x \Om \longrightarrow [0,n]$ which is constant on each interval $[t_j, t_{j+1})$,
	and
	\begin{equation} \label{eq:def_Lc_pi}
		\Lc^{\pi,+} :=
		\Lc^{\pi} \cap \Lc^+,
		~~~~
		\Lc^{\pi,+}_0
		:=
		\Lc^{\pi} \cap \Lc^+_0.
	\end{equation}
	In this context, one still has the extension of the Bank-El Karoui's representation theorem (see Theorem \ref{thm:BErepresentationTheorem}) without the USCE condition on $Y$.
	It follows that, for any $Y \in \Yc^{\pi}$, there exists an $\F^{\pi}$-optional process $L$ which is piecewise constant on each interval $[t_j, t_{j+1})$ such that
	\begin{equation}\label{eq:YrepresentedLdiscrete}
		Y_\tau = \E\Big[\int_{(\tau,T]} f\Big(t, \sup_{s \in [\tau,t)}L_s \Big) \mu^{A, \pi}(dt) + \xi \Big| \Fc^{\pi}_\tau\Big],
		~\mbox{a.s., for all}~
		\tau \in \Tc,
	\end{equation}
	where $\sup_{s \in [\tau,t)}L_s$ will be replaced by $L_{t-}$ if $L \in \Lc^{\pi,+}$,
	and the hitting time $\tau_{i} :=  \inf \big\{ t\geq 0 ~: L_t \geq i \big\}$ is the minimum solution to the $i$-th agent's optimal stopping problem \eqref{eq:VApi}.

	\vspace{0.5em}
	
	On the other hand, for any $L \in \Lc^{\pi,+}$, the process
	\begin{equation}\label{eq:LdefinedYdiscrete}
		Y^L_{s}
		~:=~
		\E \Big[\xi + \int_{[0,T]}f\Big(t, L_{t-} \Big)\mu^{A, \pi} (dt) \Big| \Fc_s\Big]
		-
		\int_{[0,s]}f\Big(t, L_{t-} \Big)\mu^{A, \pi} (dt),
		~\mbox{a.s.},
	\end{equation}
	satisfies clearly that $Y^L_s = Y^L_{t_j}$ for all $s \in [t_j, t_{j+1})$, $j=0, \cdots, m-1$, so that $Y^L \in \Yc^{\pi}$.
	
	\vspace{0.5em}
	
	One can follow almost the same arguments in Theorem \ref{ReductionByBErepresentation} and Proposition \ref{prop:P_SnellEnv},
	but use the discrete-time version of the optimal stopping theory and Bank-El Karoui's representation theorem (see Theorems \ref{thm:SnellEnv} and \ref{thm:BErepresentationTheorem}),
	to obtain the analogue solution to the discrete-time exit contract problem.
	Let us just state the results and omit the proof.

	\begin{theorem} \label{thm:main_discrete}
		Assume that $\E[ |f_i(t_j)| ] + \E[ |g_i(t_j)|] < \infty$ for all $i=1, \cdots, n$ and $j=0, \cdots, m$,
		and
		$$
			f_1(t_j, \om) < \cdots < f_n(t_j, \om),  ~~\mbox{for all}~(t_j, \om) \in \pi \x \Om.
		$$
		Then one has the following equivalence for the contract design problem:
		\begin{align}\label{eq:PrincipalPb-L_pi}
			V^{P,\pi}
			& ~=
			\sup_{L \in \Lc^{\pi,+}}
			\E\Bigg[\sum_{i=1}^{n} \bigg(
			\int_{[0,T]}\Big(g_i(t)\mathds{1}_{\{L_{t-} < i\}}\mu^{P, \pi} (dt) - f\big(t,L_{t-}\big)\mathds{1}_{\{L_{t-} \geq i\}}\mu^{A, \pi} (dt) \Big) -  \xi \bigg) \Bigg]  \nonumber \\
			&~=
			\sup_{L \in \Lc^{\pi,+}_0}
			\E\Bigg[\sum_{i=1}^{n} \bigg(
			\int_{[0,T]}\Big(g_i(t)\mathds{1}_{\{L_{t-} < i\}}\mu^{P, \pi} (dt) - f\big(t,L_{t-}\big)\mathds{1}_{\{L_{t-} \geq i\}}\mu^{A, \pi} (dt) \Big) -  \xi \bigg) \Bigg]  \nonumber \\
			&~=
			\sup_{\substack{\{\tau_i\}_{i=1}^n \subset \Tc^{\pi} \\ \tau_1 \leq \cdots \leq \tau_n}}
			\E \bigg[\sum_{i=1}^n \bigg( \int_{[0,\tau_i]} g_i(t) \mu^{P,\pi}(dt) - \sum_{j = i}^{n}\int_{(\tau_j,\tau_{j+1}]}f_j(t) \mu^{A, \pi} (dt) - \xi \bigg) \bigg].
		\end{align}
		Moreover, there exists an optimal contract for Problem \eqref{eq:PbP_pi}.
	\end{theorem}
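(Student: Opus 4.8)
The plan is to transcribe the proofs of Theorem~\ref{ReductionByBErepresentation} and Proposition~\ref{prop:P_SnellEnv} into the discrete-time setting, replacing the continuous-time Snell envelope, the representation \eqref{eq:BErepresentationPA} and the multiple-stopping existence result by their piecewise-constant analogues recorded around \eqref{eq:YrepresentedLdiscrete}--\eqref{eq:LdefinedYdiscrete}. The one structural simplification is that membership in $\Yc^{\pi}$ (see \eqref{eq:def_Ycpi}) no longer requires upper-semicontinuity in expectation, so the role played by the USCE verification in Lemma~\ref{L-definedY} is taken over by the purely measurability-theoretic check that the constructed contracts are $\F^{\pi}$-adapted and constant on each $[t_j,t_{j+1})$.

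For the first equality I would prove the two inequalities separately. For ``$\le$'', given $Y\in\Yc^{\pi}$ I invoke the discrete Bank-El Karoui representation \eqref{eq:YrepresentedLdiscrete} to obtain an $\F^{\pi}$-optional $L^Y$, set $\Lh^{Y}_t:=(0\vee\sup_{s\in[0,t)}L^Y_s)\wedge n$, and reproduce the chain of (in)equalities in \eqref{eq:tau2L} verbatim, the only change being that $\mu^A,\mu^P$ are replaced by $\mu^{A,\pi},\mu^{P,\pi}$ and that $\{\tauh_i<t\}\subset\{\Lh^{Y}_t\ge i\}\subset\{\tauh_i\le t\}$ holds because $\tauh_i=\inf\{t:L^Y_t\ge i\}$ is the smallest optimal stopping time coming from the discrete representation. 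For ``$\ge$'', given $L\in\Lc^{\pi,+}$ I take $Y^L$ from \eqref{eq:LdefinedYdiscrete}: I first check $Y^L\in\Yc^{\pi}$, namely that $Y^L_s=Y^L_{t_j}$ for $s\in[t_j,t_{j+1})$ (immediate from the form of \eqref{eq:LdefinedYdiscrete} together with \eqref{eq:def_Fpi}), that $\E[|Y^L_{t_j}|]<\infty$, and that $Y^L_T=\xi$ so the participation constraint holds; then, using that $\tauh_i=\inf\{t:L_t\ge i\}$ is the smallest optimal stopping time for $Y^L$ and the discrete analogue of \eqref{eq:L2tau}, the objective equals the control functional, which gives the reverse bound.

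The second equality follows exactly as in part~(iii) of the proof of Theorem~\ref{ReductionByBErepresentation}: for $L\in\Lc^{\pi,+}$ the flooring $L^0_t:=[L_t]$ lies in $\Lc^{\pi,+}_0$, satisfies $L^0\le L$ and $\{L_{t-}\ge i\}=\{L^0_{t-}\ge i\}$, so replacing $L$ by $L^0$ does not decrease the functional, while $\Lc^{\pi,+}_0\subset\Lc^{\pi,+}$ is trivial. The third equality mirrors Proposition~\ref{prop:P_SnellEnv}(i): a sequence $\tau_1\le\cdots\le\tau_n$ in $\Tc^{\pi}$ corresponds bijectively to $L_t:=\sum_{i=1}^n\mathds{1}_{\{\tau_i<t\}}\in\Lc^{\pi,+}_0$ through $\tau_i=\inf\{t:L_{t-}\ge i\}$, and the pointwise algebraic identity relating the two integrands carries over unchanged since both $\mu^{A,\pi}$ and $\mu^{P,\pi}$ are supported on the grid $\pi$.

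Finally, for existence the discrete setting is genuinely easier than the continuous one. After the same rearrangement as in Proposition~\ref{prop:P_SnellEnv}(ii) the problem becomes a multiple optimal stopping problem over ordered stopping times in $\Tc^{\pi}$, and since every reward mapping is automatically ``continuous in expectation'' on the finite grid, one may either invoke Theorem~\ref{thm:OptimalMultiStoppingRecall} directly or, more elementarily, solve the nested stopping problems by backward induction along $t_m,t_{m-1},\dots,t_0$ using the discrete Snell envelope of Theorem~\ref{thm:SnellEnv}, which needs no regularity hypothesis. The resulting optimizer $\{\tauh_i\}$ yields $\Lh^*_t:=\sum_{i=1}^n\mathds{1}_{\{\tauh_i<t\}}\in\Lc^{\pi,+}_0$, and hence the optimal contract for \eqref{eq:PbP_pi} through \eqref{eq:LdefinedYdiscrete}. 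I expect the only real friction to be bookkeeping, namely confirming that all constructed processes remain $\F^{\pi}$-adapted and piecewise constant and that the hitting times indeed take values in $\Tc^{\pi}$, rather than any analytic difficulty, precisely because the delicate USCE / continuity-in-expectation arguments of the continuous case are no longer needed.
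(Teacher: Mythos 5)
Your proposal is correct and follows essentially the same route the paper intends: the paper itself omits the proof of Theorem \ref{thm:main_discrete}, stating only that one should repeat the arguments of Theorem \ref{ReductionByBErepresentation} and Proposition \ref{prop:P_SnellEnv} with the discrete-time versions of the Snell envelope and Bank--El Karoui representation, which is precisely what you carry out. Your added observations (the USCE verification being replaced by a measurability/piecewise-constancy check, and existence following from backward induction on the finite grid) are accurate and consistent with the paper's remarks surrounding \eqref{eq:YrepresentedLdiscrete}--\eqref{eq:LdefinedYdiscrete} and Theorem \ref{thm:SnellEnv}.
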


	\begin{remark}
		As in Theorem \ref{ReductionByBErepresentation} and Proposition \ref{prop:P_SnellEnv},
		one can also construct an optimal solution to \eqref{eq:PbP_pi} from a solution to \eqref{eq:PrincipalPb-L_pi},
		and vice-versa.
		We nevertheless skip this for simplicity.
	\end{remark}

\subsection{Convergence of the discrete-time value function to the continuous-time one}
\label{subsec:convergence}

	As illustrated in Section 5 of Bank and F\"ollmer \cite{BankFollmer}, the discrete-time version of the representation theorem  could provide a numerical algorithm for the continuous-time problem.
	Here we consider the exit contract design problem, and provide a convergence result of the discrete-time problems to the continuous-time problem.

	\vspace{0.5em}

	Let us consider a sequence $(\pi_m)_{m \ge 1}$ of partitions of $[0,T]$, with $\pi_m = (t^m_j)_{0 \le j \le m}$,
	and such that $|\pi_m| := \max_{j=0, \cdots, m-1} (t^m_{j+1} - t^m_{j}) \longrightarrow 0$ as $m \longrightarrow \infty$.
	We also fix $\mu^{A, \pi_m}$ and $\mu^{P, \pi_m}$ by
	$$
		\mu^{A, \pi_m} (dt) := \sum_{j=1}^{m} c^{A, m} \delta_{t^m_j} (dt),
		~~\mbox{and}~
		\mu^{P, \pi_m} (dt) := \sum_{j=1}^{m} c^{P, m} \delta_{t^m_j} (dt),
	$$
	with
	$$
		c^{A, m}_j := \mu^A((t^m_{j -1}, t^m_{j}])
		~\mbox{and}~
		c^{P,m} := \mu^P ((t^m_{j -1}, t^m_{j}]).
	$$
	Recall also that the corresponding value function $V^{P, \pi_m}$ is defined by \eqref{eq:PbP_pi} with partition $\pi_m$.

	\begin{assumption}\label{Assum:ConvergenceDiscreteContinuous}
		$\mathrm{(i)}$
		For each $i = 1, \cdots, n$, one has
		$$
			\E\Big[\sup_{t\in [0,T]}|g_i(t)|\Big]
			+
			\E\Big[\sup_{t\in [0,T]}|f_i(t)|\Big]
			< +\infty.
		$$

		\noindent $\mathrm{(ii)}$
		For each $i = 1, \cdots, n$,
		\begin{align} \label{eq:int_fg_pim}
			& \lim_{m \to \infty}
			\sum_{j = 1}^{m}
			\int_{(t^{m}_{j - 1},t_{j}^{m}]}\big|
			f_i\big(t\big)
			- f_i(t_{j}^{m})\big|
			\mu^A(dt) = 0,
            ~\mbox{a.s.}\\
			~~~
			& \lim_{m \to \infty}
			\sum_{j = 1}^{m}
			\int_{(t^{m}_{j - 1},t_{j}^{m}]}\big|
    		        g_i\big(t\big)
			- g_i(t_{j}^{m})\big|
			\mu^P(dt) = 0,
			~\mbox{a.s.}
		\end{align}	
	\end{assumption}

	\begin{theorem}
		Let Assumptions \ref{assum:f} and \ref{Assum:ConvergenceDiscreteContinuous} hold true.
		Then
		\begin{equation}
			\lim_{m \to \infty} V^{P, \pi_m} = V^P.
		\end{equation}
	\end{theorem}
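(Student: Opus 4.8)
The plan is to establish the two one-sided inequalities $\limsup_{m\to\infty} V^{P,\pi_m}\le V^P$ and $\liminf_{m\to\infty} V^{P,\pi_m}\ge V^P$ separately, working throughout with the multiple optimal stopping formulations rather than with the contracts themselves. By Proposition \ref{prop:P_SnellEnv}(i), $V^P$ equals the right-hand side of \eqref{eq:PrincipalPb_equiv}, and by the last line of \eqref{eq:PrincipalPb-L_pi} in Theorem \ref{thm:main_discrete}, $V^{P,\pi_m}$ has the same structure but with the stopping times restricted to $\Tc^{\pi_m}$ and the measures $\mu^A,\mu^P$ replaced by $\mu^{A,\pi_m},\mu^{P,\pi_m}$. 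For an ordered family $\tau=(\tau_1,\dots,\tau_n)$ with $\tau_{n+1}\equiv T$ and a pair of measures $(\nu^A,\nu^P)$, I would write
\[
	J(\tau;\nu^A,\nu^P):=\E\Big[\sum_{i=1}^n\Big(\int_{[0,\tau_i]}g_i\,d\nu^P-\sum_{j=i}^n\int_{(\tau_j,\tau_{j+1}]}f_j\,d\nu^A-\xi\Big)\Big],
\]
so that $V^P=\sup J(\tau;\mu^A,\mu^P)$ over ordered $\tau\subset\Tc$ and $V^{P,\pi_m}=\sup J(\tau;\mu^{A,\pi_m},\mu^{P,\pi_m})$ over ordered $\tau\subset\Tc^{\pi_m}$.

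The core technical device is a \emph{uniform} measure-approximation estimate. Set
\[
	\eta_m:=\sum_{i=1}^n\Big(\sum_{j=1}^m\int_{(t^m_{j-1},t^m_j]}\big|g_i(t)-g_i(t^m_j)\big|\,\mu^P(dt)+\sum_{j=1}^m\int_{(t^m_{j-1},t^m_j]}\big|f_i(t)-f_i(t^m_j)\big|\,\mu^A(dt)\Big).
\]
By Assumption \ref{Assum:ConvergenceDiscreteContinuous}(ii), $\eta_m\to0$ a.s., and Assumption \ref{Assum:ConvergenceDiscreteContinuous}(i) provides an integrable envelope, so dominated convergence yields $\E[\eta_m]\to0$. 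The key observation is that, because $\mu^{A,\pi_m},\mu^{P,\pi_m}$ are the right-endpoint quadratures of the atomless measures $\mu^A,\mu^P$, for any ordered $\tau\subset\Tc^{\pi_m}$ (so each $\tau_i$ takes values on the grid, say $\tau_i=t^m_k$) one has pathwise $\big|\int_{[0,\tau_i]}g_i\,d(\mu^{P,\pi_m}-\mu^P)\big|\le\sum_{j=1}^m\int_{(t^m_{j-1},t^m_j]}|g_i(t^m_j)-g_i(t)|\,\mu^P(dt)$, and similarly for the $f$-terms and the constant $\int_{[0,T]}f_n\,d\mu^A$. Summing, I would obtain $|J(\tau;\mu^{A,\pi_m},\mu^{P,\pi_m})-J(\tau;\mu^A,\mu^P)|\le C\,\E[\eta_m]$ with $C$ depending only on $n$, \emph{uniformly in} $\tau$. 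The upper bound is then immediate: since $\Fc^{\pi_m}_t\subseteq\Fc_t$, every ordered $\tau\subset\Tc^{\pi_m}$ is an ordered family in $\Tc$, whence $J(\tau;\mu^A,\mu^P)\le V^P$; combining, $J(\tau;\mu^{A,\pi_m},\mu^{P,\pi_m})\le V^P+C\,\E[\eta_m]$, and taking the supremum over $\tau$ followed by $\limsup_m$ gives $\limsup_m V^{P,\pi_m}\le V^P$.

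For the lower bound I would fix $\eps>0$ and an $\eps$-optimal ordered family $\tau^*=(\tau_1^*,\dots,\tau_n^*)\subset\Tc$, and discretize by $\tau_i^m:=\inf\{t\in\pi_m:t\ge\tau_i^*\}$. One checks that $\tau_i^m\in\Tc^{\pi_m}$ (indeed $\{\tau_i^m\le t^m_k\}=\{\tau_i^*\le t^m_k\}\in\Fc_{t^m_k}=\Fc^{\pi_m}_{t^m_k}$), that the ordering is preserved, and that $\tau_i^*\le\tau_i^m$ with $\tau_i^m-\tau_i^*\le|\pi_m|\to0$ a.s. I would then decompose $J(\tau^m;\mu^{A,\pi_m},\mu^{P,\pi_m})-J(\tau^*;\mu^A,\mu^P)$ into a measure-error part controlled by $C\,\E[\eta_m]$ and a stopping-time-error part built from terms such as $\E[\int_{(\tau_i^*,\tau_i^m]}g_i\,d\mu^P]$: since $\mu^P$ is atomless its distribution function is continuous, so $\mu^P((\tau_i^*,\tau_i^m])\to0$ a.s., and Assumption \ref{Assum:ConvergenceDiscreteContinuous}(i) with dominated convergence makes these expectations vanish (and analogously for the $f$-terms). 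Hence $J(\tau^m;\mu^{A,\pi_m},\mu^{P,\pi_m})\to J(\tau^*;\mu^A,\mu^P)\ge V^P-\eps$; as $\tau^m$ is feasible for the discrete problem, $\liminf_m V^{P,\pi_m}\ge V^P-\eps$, and letting $\eps\downarrow0$ concludes. The main obstacle is the uniform measure-approximation estimate of the second paragraph — making the bound genuinely independent of the (random, grid-valued) strategy $\tau$ — together with the careful separation in the lower bound of the two distinct sources of error, where the atomlessness of $\mu^A,\mu^P$ is exactly what kills the stopping-time discretization error.
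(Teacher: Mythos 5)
Your proof is correct and follows essentially the same strategy as the paper's: both directions rest on (a) a quadrature estimate comparing $\mu^{A,\pi_m},\mu^{P,\pi_m}$ with $\mu^A,\mu^P$ that is uniform over strategies thanks to Assumption \ref{Assum:ConvergenceDiscreteContinuous}, and (b) for the lower bound, rounding a near-optimal continuous-time strategy onto the grid and killing the resulting error via atomlessness of $\mu^A,\mu^P$ together with the fact that the strategy changes at most $n$ times. The only cosmetic difference is that you work in the multiple-stopping formulation \eqref{eq:PrincipalPb_equiv} while the paper works with the equivalent control formulation over $\Lc^+_0$, discretizing $L$ by sampling at grid points; under the correspondence $L_t=\sum_i\mathds{1}_{\{\tau_i<t\}}$ of Proposition \ref{prop:P_SnellEnv} the two arguments coincide step by step.
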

	\proof
	$\mathrm{(i)}$ For any fixed $L \in \Lc^+_0$, and $m \ge 1$, let us define
	$$
		L^{m}_t
		~ := ~
		\sum_{j = 0}^{m-1}
		L_{t_j^m}\mathds{1}_{[t_{j}^m,t_{j+1}^m)}(t) + L_T \mathds{1}_{\{T\}} (t),
		~t \in [0,T],
	$$
	so that $L^{m} \in \Lc^{\pi_m, +}_0$.
	Notice that $L^{m}$, $L$ are nondereasing and take only values in $\{0, 1, \cdots, n\}$,  then
	$$
		L^{m}_{t_j^m-} = L_{t_j^m-}
	$$
	holds except for at most $n$ numbers of time $t^m_j$.
	Therefore, for each $i=1, \cdots, n$,
	\begin{align*}
		& ~
		\int_{[0,T]}\Big|
			f\big(t,L^{m}_{t-}\big)
			\mathds{1}_{\{L^{m}_{t-} \geq i\}}
			-
			f\big(t,L_{t-}\big)
			\mathds{1}_{\{L_{t-} \geq i\}}\Big|
		\mu^{A,\pi_m}(dt) \\
		~= & ~
		\sum_{j = 0}^{m}\Big|
		f\big({t_{j}^{m}},L^{m}_{t_{j}^{m}-}\big)
		\mathds{1}_{\{L^{m}_{t_{j}^{m}-} \geq i\}}
		-
		f\big({t_{j}^{m}},L_{t_{j}^{m}-}\big)
		\mathds{1}_{\{L_{t_{j}^{m}-} \geq i\}}\Big|
		\mu^A((t_{j - 1}^{m}, t_{j}^{m}]) \\
		~ \leq & ~
		2n\Big(\sup_{t\in [0,T]}|f_1(t)| +
		\sup_{t\in [0,T]}|f_n(t)| + 1\Big)
		~\Big( \max_j\mu^A((t_{j - 1}^{m}, t_{j}^{m}]) \Big) .
	\end{align*}

	It follows that
	\begin{align} \label{eq:Discrete2ContinuousEstimation1}
		& ~
		\lim_{m \to \infty}
		\E\Big[\int_{[0,T]}\Big|
			f\big(t,L^{m}_{t-}\big)
			\mathds{1}_{\{L^{m}_{t-} \geq i\}}
			-
			f\big(t,L_{t-}\big)
			\mathds{1}_{\{L_{t-} \geq i\}}\Big|
			\mu^{A,\pi_m}(dt)\Big] \nonumber \\
		~ \leq & ~
		\lim_{m \to \infty}
		2n\E\Big[\sup_{t\in [0,T]}|f_1(t)| +
			\sup_{t\in [0,T]}|f_n(t)| + 1\Big]
			\Big( \max_j\mu^A((t_{j - 1}^{m}, t_{j}^{m}]) \Big)
		~=~ 0.
	\end{align}
	Further, for $L \in \Lc^+_0$, $L_{t-}$ remains constant on $(t^{m}_{j - 1},t_{j}^{m}]$ except for at most $n$ numbers of time $t^m_j$.
	It follows that, for each $i=1, \cdots, n$,
	\begin{align*}
		& ~
		\Big|\E\Big[\int_{[0,T]}
			f\big(t,L_{t-}\big)
			\mathds{1}_{\{L_{t-} \geq i\}}
		\Big(\mu^{A,\pi_m}(dt)- \mu^A(dt)\Big)\Big]\Big|
		\\
		~ = & ~
        \Big|\E\Big[\sum_{j = 1}^{m}
            \int_{(t^{m}_{j - 1},t_{j}^{m}]}\Big(
            f\big(t,L_{t-}\big)
                - f\big(t^{m}_i,L_{t_{j}^{m}-}\big)\Big)
                \mathds{1}_{\{L_{t-} \geq i\}}
             \mu^A(dt)\Big]\Big|
		\\
	~ \leq & ~
        2n(M + 1) \Big( \max_j\mu^A((t_{j - 1}^{m}, t_{j}^{m}]) \Big)
        + \sum_{i = 0}^{n}\E\Big[\sum_{j = 1}^{m}
            \int_{(t^{m}_{j - 1},t_{j}^{m}]}\Big|
            f_i\big(t\big)
                - f_i\big(t^{m}_j\big)\Big|
            \mu^A(dt)\Big],
	\end{align*}
	with
	$$
		M ~:=~
		\E\Big[\sup_{t\in [0,T]}|f_1(t)| +
		\sup_{t\in [0,T]}|f_n(t)| \Big].
	$$
	By \eqref{eq:int_fg_pim} and dominated convergence theorem,
        we have for $i = 0, 1, \cdots, n$
	$$
        \lim_{m \to \infty}
            \E\Big[\sum_{j = 1}^{m}
            \int_{(t^{m}_{j - 1},t_{j}^{m}]}\Big|
            f_i\big(t\big)
                - f_i(t_{j}^{m})\Big|
            \mu^A(dt)\Big] = 0,
	$$
	and it follows that
	\begin{equation}\label{eq:Discrete2ContinuousEstimation2}
		\lim_{m \to \infty}
		\E\Big[\int_{[0,T]}
			f\big(t,L_{t-}\big)
			\mathds{1}_{\{L_{t-} \geq i\}}
		\Big(\mu^{A,\pi_m}(dt)- \mu^A(dt)\Big)\Big]
		~ = ~
		0.
	\end{equation}
	Then by \eqref{eq:Discrete2ContinuousEstimation1} and \eqref{eq:Discrete2ContinuousEstimation2}, one obtains that
   \begin{align*}
        \lim_{m \to \infty}
        \E\Big[\int_{[0,T]}
            f\big(t,L^{m}_{t-}\big)
            \mathds{1}_{\{L^{m}_{t-} \geq i\}}
             \mu^{A, \pi_m}(dt)\Big]
        ~  =  ~
        \E\Big[\int_{[0,T]}
            f\big(t,L_{t-}\big)
            \mathds{1}_{\{L_{t-} \geq i\}}
             \mu^A(dt)\Big].
   \end{align*}
	Similarly, one can obtain that
	$$
		\lim_{m \to \infty}\E\Bigg[
		\int_{[0,T]}
		g_i\big(t\big)
		\mathds{1}_{\{L^{m}_{t-} \geq i\}}\mu^{P, \pi_m}(dt)
		\Bigg]
		~ = ~
		\E\Bigg[\int_{[0,T]}
		g_i\big(t\big)
		\mathds{1}_{\{L_{t-} \geq i\}}\mu^P(dt)
		\Bigg].
	$$
	This implies that
	\begin{align*}
		& ~
		\lim_{m \to \infty}
		\E\Bigg[\sum_{i=1}^{n}
		\int_{[0,T]}\bigg(
		g_i(t)
                \mathds{1}_{\{L^{m}_{t-} < i\}}\mu^{P,\pi_m}(dt) -
		f\big(t,L^{m}_{t-}\big)
                \mathds{1}_{\{L^{m}_{t-} \geq i\}}\mu^{A,\pi_m}(dt)
		\bigg) - \xi\Bigg]
		\\
		~ = & ~
		\E\Bigg[\sum_{i=1}^{n}
		\int_{[0,T]}\bigg(
		g_i(t)
		\mathds{1}_{\{L_{t-} < i\}}\mu^P(dt) -
		f\big(t,L_{t-}\big)
                \mathds{1}_{\{L_{t-} \geq i\}}\mu^A(dt)
		\bigg) - \xi\Bigg].
	\end{align*}
	Further, by the arbitrariness of $L$, this leads to the inequality
	\begin{equation*}
		\liminf_{m \to \infty}V^{P, \pi_m} ~\geq~ V^P.
	\end{equation*}

	\noindent $\mathrm{(ii)}$ To prove the reverse inequality,
	we notice that $L \in \Lc^+_0$ for any $L \in \Lc^{\pi_m, +}_0$.
	Then, for each $i=1, \cdots, n$, one has the estimation
	$$
		\Big|\E\Big[\int_{[0,T]}
			f\big(t,L_{t-}\big)
			\mathds{1}_{\{L_{t-} \geq i\}}
			\Big(\mu^{A,\pi_m}(dt)- \mu^A(dt)\Big)\Big]\Big|
			\\
		~ \leq~
		\eps_m,
	$$
	where $\eps_m$ is independent of $L$, and satisfies
	$$
		\eps_m
		~=~
		2n(M + 1) \Big( \max_j\mu^A((t_{j - 1}^{m}, t_{j}^{m}]) \Big)
		+ \sum_{i = 0}^{n}\E\Big[\sum_{j = 1}^{m}
		\int_{(t^{m}_{j - 1},t_{j}^{m}]}\Big|
		f_i\big(t\big)
		- f_i\big(t^{m}_j\big)\Big|
		\mu^A(dt)\Big]
		~\longrightarrow~ 0,
	$$
	as $m \longrightarrow \infty$.
	Similarly, one has, for any $L \in \Lc^{\pi_m,+}_0$
	$$
		\eps_m' ~:=~
		\Big|\E\Big[\int_{[0,T]}
		g_i\big(t\big)
		\mathds{1}_{\{L_{t-} < i\}}
		\Big(\mu^{A,\pi_m}(dt)- \mu^P(dt)\Big)\Big]\Big|
		~\longrightarrow~ 0,
		~\mbox{as}~
		m \longrightarrow \infty.
	$$
	Then it follows that
	$$
		V^{P, \pi_m} ~\le~ V^P + \eps_m + \eps_m',
	$$
	which concludes the proof.
\endproof

\section{The problem with Markovian and/or continuous contract}
\label{sec:Markovian}

	In this section, we will investigate a version of the exit contract design problem, where the admissible contracts are required to be Markovian and/or continuous w.r.t. some underlying process $X$.
	We will restrict ourself in the following discrete-time setting with a partition $\pi=(t_j)_{0 \leq j \leq m}$ of $[0,T]$, i.e. $0 = t_0 < \cdots < t_m = T$,
	$\mu^A(dt) := \sum_{j = 1}^mc_j^A\delta_{t_j}(dt)$ and $\mu^P(dt) := \sum_{j = 1}^mc_j^P\delta_{t_j}(dt)$,  with constants $c^A_j$, $c^P_j > 0$ for all $j = 1, \cdots, m$.

	\vspace{0.5em}

	Let $\Om = \R^{d \x (m+1)}$ be the canonical space with canonical process $X = (X_{t_j})_{1 \le j \le m}$, i.e. $X_{t_j} (\om) := \om_j$ for all $\om = (\om_0, \cdots, \om_m) \in \Om$.
	We further extend it to be a continuous-time process by setting $X_t = X_{t_j}$, for all $t \in [t_j, t_{j+1})$, $j=0, \cdots, m-1$.
	By abus of notation, we still denote it by $X = (X_t)_{t \in [0,T]}$.
	Let $\Fc := \Fc_T$, with $\Fc_t := \sigma(X_s ~: s  \in [0,t])$ for every $t \in [0,T]$, so that the filtration $\F = (\Fc_t)_{t \in [0,T]}$ is right-continuous and $\F^{\pi} = \F$ by \eqref{eq:def_Fpi}.
	We next assume that $\P$ is a probability measure on $(\Om, \Fc)$, under which $(X_{t_j})_{j = 0, \cdots, m}$ is a Markovian process,
	i.e.
	$$
		\Lc^{\P} \big( (X_{t_{j+1}}, \cdots, X_{t_m}) \big| X_0, \cdots, X_{t_j} \big)
		=
		\Lc^{\P} \big( (X_{t_{j+1}}, \cdots, X_{t_m}) \big| X_{t_j} \big)
		~\mbox{a.s. for each}~
		j = 0, \cdots, m-1.
	$$
	Then there exists also a family of probability measures $\{ \P^j_x ~:j = 0, \cdots, m, x \in \R^d \}$ such that $(\P^j_x)_{x \in \R^d}$ consists of a family of conditional probability distribution of $(X_{t_j}, \cdots, X_{t_m})$ known $X_{t_j}$.	

	\vspace{0.5em}

	Since $\F$ is generated by $X$, an admissible contract $Y \in \Yc^{\pi}$ (see its definition in \eqref{eq:def_Ycpi}) is a functional of process $X$.
	We will further define a class of Markovian contracts in the sense that $Y_{t_j} = y_j(X_{t_j})$ for some function $y_j$,
	and a class of continuous Markovian contracts by assuming $y$ to be continuous.
	Let us denote by $B(\R^d)$ the collection of all Borel measurable functions defined on $\R^d$, and by $C_b(\R^d)$ the collection of all bounded continuous functions defined on $\R^d$.
	Let
	\begin{align*}
		\Yc^\pi_{m}
		~:=~ &
		\big\{
			Y \in \Yc^\pi~:
			Y_{t_j} = y_j (X_{t_j}),
			~\mbox{for some}~y_j \in B(\R^d),
			~j = 0, 1, \cdots, m
		\big\},
		\\
		\Yc^\pi_{m,c}
		~:=~ &
		\big\{
			Y \in \Yc^\pi~:
			Y_{t_j} = y_j (X_{t_j}),
			~\mbox{for some}~y_j \in C_b(\R^d),
			~j = 0, 1, \cdots, m
		\big\}.
	\end{align*}
	We then obtain two variations of the contract design problem:
	\begin{align} \label{eq:PbP_pim}
 		V^{P,\pi}_m
		~ & :=~
		\sup_{Y \in \Yc^{\pi}_m} \E \bigg[ \sum_{i=1}^n \Big( \int_{[0,\tauh_i]} g_i(t) \mu^{P,\pi}(dt) - Y_{\tauh_i} \Big) \bigg],
		\\ \label{eq:PbP_pimc}
		V^{P,\pi}_{m,c}
		~ & :=~
		\sup_{Y \in \Yc^{\pi}_{m,c}} \E \bigg[ \sum_{i=1}^n \Big( \int_{[0,\tauh_i]} g_i(t) \mu^{P,\pi}(dt) - Y_{\tauh_i} \Big) \bigg],
	\end{align}
	where $\tauh_i$ is the minimum optimal stopping time of the agent $i$ (with the corresponding given contract $Y$) in \eqref{eq:VApi}.

	\vspace{0.5em}

	We can naturally expect to solve the above contract design problem as in Theorem \ref{thm:main_discrete}, but consider a subset of $\Lc^{\pi}$.
	Let us define
	\begin{align*}
		\Lc^\pi_{m}
		~:=~ &
		\big\{
			L \in \Lc^\pi~:
			L_{t_j} = l_j(X_{t_j})
			~\mbox{for some}~
			l_j \in B(\R^d), ~j = 0, 1, \cdots, m
		\big\}, \\
		\Lc^\pi_{m,c}
		~:=~ &
		\big\{
			L \in \Lc^\pi~:
			L_{t_j} = l_j(X_{t_j})
			~\mbox{for some}~
			l_j \in C_b(\R^d), ~j = 0, 1, \cdots, m
		\big\},
	\end{align*}
	and with $\Lc^\pi$, $\Lc^{\pi, +}$ and $\Lc^{\pi, +}_0$ defined above and in \eqref{eq:def_Lc_pi}, we let
	\begin{equation*}
		\Lc^{\pi,+}_{m}:= \Lc^{\pi}_{m} \cap \Lc^{+}, ~~
		\Lc^{\pi,+}_{m,0}:= \Lc^{\pi}_{m} \cap \Lc^{+}_0, ~~
		\Lc^{\pi,+}_{m,c}:= \Lc^{\pi}_{m,c} \cap \Lc^{+}.
	\end{equation*}

	We next formulate some additional technical conditions on the coefficient functions $\xi$, $f$ and probability kernels $(\P^j_x)_{x \in \R^d}$.

	\begin{assumption} \label{assum:Markovian}
		For each $i=1, \cdots, n$ and $t \in [0,T]$, the random variables $f_i(t, \cdot)$ and $g_i(t, \cdot)$ satisfies
		$\E \big[ |f_i(t)| + |g_i(t)| \big] < \infty$, and $f_i(t, \cdot)$ depends only on $X_{t}$.
		The random variable $\xi$ depends only on $X_T$.
		Moreover, one has
		$$
			f_1(t, \om) < \cdots < f_n(t, \om),  ~~\mbox{for all}~(t, \om) \in [0,T] \x \Om.
		$$
	\end{assumption}

	Recall that, in \eqref{eq:f_interpolation}, the functionals $(f_1, \cdots, f_n)$ would be interpolated into a functional $f: [0,T] \x \Om \x \R \longrightarrow \R$.
	Notice also that $g_i(t, \cdot)$ is only $\Fc_t$--measurable, it could depends on the whole past path of $X_{t \wedge \cdot}$.
	To emphasize this point, we will write
	$$
		\big( f_i(t, X_t),~f(t, X_t, \ell), ~ g(t, X_{t \wedge \cdot}),  \xi(X_T) \big)
		~~\mbox{in place of}~
		\big( f_i(t, \om),~f(t, \om, \ell), ~ g(t, \om), \xi(\om) \big).
	$$

	\begin{assumption}\label{Assume:ContinuousLaw}
		For each $i=1, \cdots, n$ and $j=0, \cdots, m-1$, the function $f_i(t_j, \cdot) \in C_b(\R^d)$,
		and the map $x \longmapsto \P_x^j$ is continuous under the weak convergence topology.
	\end{assumption}

	\begin{example}
		Let $\Xb$ be a diffusion process defined by the stochastic differential equation
		$$
			d \Xb_t ~=~ \mu(\Xb_t) dt ~+~ \sigma(\Xb_t) dW_t,
		$$
		with a Brownian motion $W$  in the filtered probability space $(\Om, \Fc, \F, \P)$ and the Lipschitz coefficient functions $b$ and $\sigma$.
		Let $X_t = \Xb_{t_j}$ for all $t \in [t_j, t_{j+1})$ and $j=0, \cdots, m-1$.
		Then it satisfies Assumption \ref{Assume:ContinuousLaw}.
	\end{example}

	\begin{theorem}\label{thm:MarkovContinuousContract}
		Let Assumption \ref{assum:Markovian} hold true.
		Then
		\begin{align}\label{equiv:PrincipalPb-markov}
			V^{P,\pi}_m
			\!=\!\!
			\sup_{L \in \Lc^{\pi,+}_{m,0}} \!
			\E\bigg[ \! \sum_{i=1}^{n}  \!\! \bigg( \!\!
			\int_{[0,T]} \!\!\! \Big(g_i(t, X_{t\wedge \cdot})\mathds{1}_{\{L_{t-} < i\}}\mu^{P, \pi} (dt)
				\!-\!
				f\big(t, X_t,L_{t-}\big)\mathds{1}_{\{L_{t-} \geq i\}}\mu^{A, \pi} (dt) \! \Big)
				\!-\!
				\xi(X_T) \!\! \bigg) \! \bigg].
		\end{align}
		Let Assumptions \ref{assum:Markovian}  and \ref{Assume:ContinuousLaw} hold true.
		Then
		\begin{align} \label{equiv:PrincipalPb-continuous}
			V^{P,\pi}_{m,c}
			\!=\!\!
			\sup_{L \in \Lc^{\pi,+}_{m,c}} \!\!
			\E\bigg[ \! \sum_{i=1}^{n}  \!\! \bigg( \!\!
			\int_{[0,T]} \!\!\! \Big(g_i(t, X_{t\wedge \cdot})\mathds{1}_{\{L_{t-} < i\}}\mu^{P, \pi} (dt)
				\!-\!
				f\big(t, X_t,L_{t-}\big)\mathds{1}_{\{L_{t-} \geq i\}}\mu^{A, \pi} (dt) \! \Big)
				\!-\!
				\xi(X_T) \!\! \bigg) \! \bigg].
		\end{align}
	\end{theorem}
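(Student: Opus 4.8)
The plan is to mirror the proofs of Theorem \ref{ReductionByBErepresentation}, Proposition \ref{prop:P_SnellEnv} and Theorem \ref{thm:main_discrete}, while carrying the extra bookkeeping needed to keep the representing process $L$ inside the Markovian (resp. continuous Markovian) class; throughout I would use the discrete-time Bank--El Karoui representation together with the Markov property of $(X_{t_j})$ under $\P$. Let me write $\Phi(L)$ for the expectation appearing on the right-hand side of \eqref{equiv:PrincipalPb-markov}.

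First I would treat the easy inequality, that the right-hand side is dominated by the principal's value. Fix $L\in\Lc^{\pi,+}_{m,0}$, so $L_{t_j}=l_j(X_{t_j})$ with $l_j\in B(\R^d)$ and $L$ nondecreasing and integer-valued. Then \eqref{eq:LdefinedYdiscrete} gives $Y^L_{t_j}=\E[\xi(X_T)+\sum_{k>j}c^A_k\, f(t_k,X_{t_k},l_{k-1}(X_{t_{k-1}}))\mid\Fc_{t_j}]$, and since $f$ and $\xi$ are Markovian the Markov property collapses this conditional expectation to a function of $X_{t_j}$ alone, i.e. $Y^L\in\Yc^\pi_m$. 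Exactly as in Theorem \ref{thm:main_discrete}, since $L$ is nondecreasing and integer-valued the agents' smallest optimal stopping times are $\tauh_i=\inf\{t:L_t\ge i\}$ and $\sum_i J^P_i(Y^L)=\Phi(L)$, whence $\sup_{L\in\Lc^{\pi,+}_{m,0}}\Phi(L)\le V^{P,\pi}_m$. In the continuous case one takes $l_j\in C_b(\R^d)$ and combines the boundedness and continuity of $f_i(t_j,\cdot)$ with the weak continuity of $x\mapsto\P^j_x$ from Assumption \ref{Assume:ContinuousLaw}; a Feller-type dominated-convergence argument then shows $x\mapsto Y^L_{t_j}$ is continuous, so $Y^L\in\Yc^\pi_{m,c}$ and the same inequality holds for $V^{P,\pi}_{m,c}$.

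For the reverse inequality I would start from an arbitrary $Y\in\Yc^\pi_m$ and first show that its Bank--El Karoui representing process $L^Y$ is itself Markovian, by backward induction on $j$. At the terminal step the running supremum in \eqref{eq:YrepresentedLdiscrete} reduces to $L^Y_{t_{m-1}}$ and the defining identity reads $y_{m-1}(X_{t_{m-1}})=\E[\xi+c^A_m f(t_m,X_{t_m},L^Y_{t_{m-1}})\mid\Fc_{t_{m-1}}]$; by the Markov property the right-hand side is a function $\Psi_{m-1}(X_{t_{m-1}},\ell)$ of $(X_{t_{m-1}},\ell=L^Y_{t_{m-1}})$ which, because $\ell\mapsto f(t,\cdot,\ell)$ is continuous, strictly increasing and onto $\R$, is invertible in $\ell$, giving $L^Y_{t_{m-1}}=\phi_{m-1}(X_{t_{m-1}})$. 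Feeding the already-Markovian later values into the running maximum at a generic step $t_j$ keeps $\Psi_j(X_{t_j},\ell)$ strictly increasing and continuous in $\ell$ (the first future term equals $f(t_{j+1},X_{t_{j+1}},\ell)$, strictly increasing, while the later $\max(\ell,\cdot)$ terms are nondecreasing), so the same inversion yields $L^Y_{t_j}=\phi_j(X_{t_j})$; under Assumption \ref{Assume:ContinuousLaw} the monotone inverse is continuous, so $\phi_j\in C$. Passing to $\lfloor(0\vee\phi_j)\wedge n\rfloor$ preserves Markovianity and, by the argument of Theorem \ref{ReductionByBErepresentation}(iii), the level sets $\{L_{t-}\ge i\}$, so the representation may be taken integer-valued without changing the objective.

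The hard part is the remaining reduction: making the representing process nondecreasing while keeping it Markovian. In the non-Markovian proof this is achieved by the pathwise running supremum $\Lh^Y_t=(0\vee\sup_{s<t}L^Y_s)\wedge n$ used in \eqref{eq:tau2L}, which is manifestly not a function of the current state, so the two required properties are in genuine tension (a nondecreasing Markovian level forces the sets $\{l_j\ge i\}$ to be essentially forward-absorbing, whereas $\phi_j(X_{t_j})$ crosses each level on a general hitting set). The route I would take is to exploit that, under Assumption \ref{assum:Markovian}, the agents' problem depends only on the Markovian data $(f,\xi)$, so the exit times $\tauh_i=\inf\{t_j:\phi_j(X_{t_j})\ge i\}$ are ordered Markovian hitting times; I would then recast the principal's problem, as in Proposition \ref{prop:P_SnellEnv} and \eqref{equiv:MultiOptimalStopping}, as a Markovian optimal multiple-stopping problem and solve it by backward dynamic programming, the value functions being computed state-by-state through the kernels $(\P^j_x)$. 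The maximizing level-increments then depend measurably (resp. continuously, using Assumption \ref{Assume:ContinuousLaw} for the Feller continuity of the dynamic-programming operator) on the current state, and the resulting level process is by construction nondecreasing, integer-valued and Markovian, hence in $\Lc^{\pi,+}_{m,0}$ (resp. $\Lc^{\pi,+}_{m,c,0}$); matching it to a contract through the first step gives $V^{P,\pi}_m\le\sup_{L\in\Lc^{\pi,+}_{m,0}}\Phi(L)$ and closes the proof. I expect this dynamic-programming reconciliation of monotonicity with the pointwise Markov constraint to be the main obstacle, and the place where Assumption \ref{Assume:ContinuousLaw} does the real work in the continuous case.
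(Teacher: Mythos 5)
Your first two paragraphs track the paper closely: the inequality $\sup_L\Phi(L)\le V^{P,\pi}_m$ via the construction $L\mapsto Y^L$, and the Markovianity (resp.\ continuity) of $L^Y$ and $Y^L$, are exactly the content of Lemmas \ref{Equiv:LandYmeasurable} and \ref{Equiv:LandYcontinuous} and of steps $\mathrm{(ii)}$--$\mathrm{(iv)}$ of the paper's proof; your backward induction with inversion in $\ell$ is essentially the paper's argument via the Markovian Snell envelopes $Z^\ell_{t_j}$ of Lemma \ref{SnellenvelopMarkovian} and, in the continuous case, Lemma \ref{Lemma:ConditionalContinuity}.

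The gap is in the reverse inequality $V^{P,\pi}_m\le\sup_{L\in\Lc^{\pi,+}_{m,0}}\Phi(L)$, where $\Phi(L)$ denotes, as in your write-up, the expectation on the right-hand side of \eqref{equiv:PrincipalPb-markov}. You correctly isolate the difficulty --- the monotonization $\Lh^Y_t=(0\vee\sup_{s<t}L^Y_s)\wedge n$ used in \eqref{eq:tau2L} is a path functional rather than a function of the current state --- but the route you propose to get around it does not close. First, the paper itself notes in Remark \ref{rem:no_stopping_formulation} that no multiple-stopping reformulation of \eqref{equiv:PrincipalPb-markov} is available: the hitting times of a level process with $L_{t_j}=l_j(X_{t_j})$ form a strict subclass of the ordered stopping times, so the unrestricted multiple-stopping problem you would solve by backward dynamic programming has value $V^{P,\pi}$, not $V^{P,\pi}_m$, and bounding $V^{P,\pi}_m$ by it yields only the trivial inequality $V^{P,\pi}_m\le V^{P,\pi}$. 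Second, even granting the dynamic programming step, its natural state at time $t_j$ is the pair $(X_{t_j},k)$ with $k$ the number of agents already stopped, so the optimal level increment is a function of $(X_{t_j},L_{t_{j-1}})$; the induced nondecreasing level process satisfies $L_{t_j}=\lambda_j(X_{t_j},L_{t_{j-1}})$ and is in general \emph{not} of the form $l_j(X_{t_j})$, hence not an element of $\Lc^{\pi,+}_{m,0}$ (resp.\ $\Lc^{\pi,+}_{m,c,0}$). The paper does not perform any such dynamic programming: it keeps the argument of Theorem \ref{ReductionByBErepresentation} verbatim, passing from $L^Y\in\Lc^{\pi}_m$ to the truncated running supremum $\Lh^Y$ and asserting directly that $\Lh^Y\in\Lc^{\pi,+}_m$. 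Whatever one thinks of that one-line claim (your objection to it is precisely the reason you deviated), your substitute establishes neither that the optimizer is Markovian in $X_{t_j}$ alone nor the inequality $V^{P,\pi}_m\le\sup_{L\in\Lc^{\pi,+}_{m,0}}\Phi(L)$, so the hard direction of the theorem is not proved in your proposal.
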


	\begin{remark} \label{rem:no_stopping_formulation}
		Similarly to Theorem \ref{ReductionByBErepresentation} and Proposition \ref{prop:P_SnellEnv},
		it is equivalent to take the supremum over $\Lc^{\pi,+}_m$ in place of $\Lc^{\pi,+}_{m,0}$
		{at the r.h.s. of \eqref{equiv:PrincipalPb-markov} (reps. \eqref{equiv:PrincipalPb-continuous}).}
		Moreover, one can construct {the corresponding optimal solutions} from the initial contract design problem in \eqref{eq:PbP_pim} (resp. \eqref{eq:PbP_pimc}), and vice versa.

		\vspace{0.5em}

		Nevertheless, we do not have now the equivalent optimal stopping formulation for \eqref{equiv:PrincipalPb-markov} and \eqref{equiv:PrincipalPb-continuous} as in Proposition \ref{prop:P_SnellEnv}.
		Intuitively, given a solution $\tauh_1 \le \cdots \tauh_n$ to the optimal stopping problem as at the r.h.s. of \ref{prop:P_SnellEnv},
		the corresponding $\widehat L^*_t := \sum_{i=1}^n \mathds{1}_{\{\tauh_i < t\}}$ is a priori measurable w.r.t. $\Fc_t = \sigma(X_s, s \le t)$, but not measurable w.r.t. $\sigma(X_t)$.
		Therefore, at time $t \in [0,T]$, the contract value $Y^{\widehat L^*}_t$ defined in \eqref{YdefinedL} has no reason to be a measurable function of $X_t$.
	\end{remark}

	As preparation of the above theorem, let us provide some technical lemmas.

	\begin{lemma}\label{SnellenvelopMarkovian}
		Let Assumption \ref{assum:Markovian} hold true.
		Then for each $Y \in \Yc^{\pi}_m$ and $j=0, \cdots, m$, the random variable $Z^\ell_{t_j}$ defined below is $\sigma(X_{t_j})$-measurable (up to the complementation of the $\sigma$-field):
		\begin{equation} \label{eq:def_Zltj}
			Z^{\ell}_{t_j}
			~:=~
			\esssup_{\tau \in \Tc_{t_j}}
			\E \Big[ \int_{(t_j, {\tau}]} f(s,X_s, \ell) \mu^{A, \pi} (ds)
				+ Y_{\tau} \Big|\Fc_{t_j} \Big].
		\end{equation}
	\end{lemma}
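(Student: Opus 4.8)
The plan is to compute $Z^\ell_{t_j}$ through the backward dynamic programming recursion for the discrete-time Snell envelope, and then to propagate the $\sigma(X_{t_j})$-measurability by a backward induction on $j$, the inductive step being powered by the Markov property of $(X_{t_j})_{j}$ under $\P$.

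First I would record that, since both $Y \in \Yc^{\pi}_m$ and $\mu^{A,\pi}$ live on the grid $\pi$, the reward process $t \longmapsto \int_{(t_j,t]} f(s,X_s,\ell)\,\mu^{A,\pi}(ds) + Y_t$ is right-continuous and piecewise constant on $\pi$. Hence for any $\tau \in \Tc_{t_j}$ the payoff in \eqref{eq:def_Zltj} coincides with that of the $\pi$-valued stopping time $\lfloor\tau\rfloor_\pi := \max\{t_k \in \pi : t_k \le \tau\}$ (which lies in $\Tc$ because $\F^{\pi}=\F$ here), so the essential supremum may be restricted to stopping times taking values in $\pi \cap [t_j,T]$ without changing $Z^\ell_{t_j}$. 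All quantities are integrable: by Assumption \ref{assum:Markovian} and $Y\in\Yc^{\pi}$ one has $\E[|f_i(t_k)|]<\infty$ and $\E[|Y_{t_k}|]<\infty$ for every $k$, and for fixed $\ell$ the function $f(\cdot,\cdot,\ell)$ is dominated by $|f_1|+|f_n|+1$ up to a constant by \eqref{eq:f_interpolation}. The discrete-time optimal stopping theory (Theorem \ref{thm:SnellEnv}) then yields $Z^\ell_{t_m}=Y_{t_m}$ together with, for $j=m-1,\dots,0$,
\begin{equation*}
  Z^\ell_{t_j}
  ~=~
  \max\Big\{\, Y_{t_j}, ~ \E\big[\, c^A_{j+1}\, f(t_{j+1},X_{t_{j+1}},\ell) + Z^\ell_{t_{j+1}} \,\big|\, \Fc_{t_j}\big]\,\Big\}.
\end{equation*}

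I would then prove by backward induction that $Z^\ell_{t_j}$ agrees $\P$-a.s. with $\phi_j(X_{t_j})$ for some Borel $\phi_j\in B(\R^d)$. The base case $j=m$ is immediate, since $Y_{t_m}=y_m(X_{t_m})$ with $y_m\in B(\R^d)$. For the inductive step, assume $Z^\ell_{t_{j+1}}=\phi_{j+1}(X_{t_{j+1}})$ a.s. By Assumption \ref{assum:Markovian} each $f_i(t_{j+1},\cdot)$ is a function of $X_{t_{j+1}}$ alone, hence by the interpolation \eqref{eq:f_interpolation} so is $f(t_{j+1},\cdot,\ell)$; thus the integrand equals $\psi_{j+1}(X_{t_{j+1}})$ a.s. for a Borel $\psi_{j+1}$. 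Using $\Fc_{t_j}=\sigma(X_{t_0},\dots,X_{t_j})$ and the Markov property of $\P$,
\begin{equation*}
  \E\big[\,\psi_{j+1}(X_{t_{j+1}})\,\big|\,\Fc_{t_j}\big]
  ~=~
  \E\big[\,\psi_{j+1}(X_{t_{j+1}})\,\big|\,X_{t_j}\big]
  ~=~
  \int_{\R^d} \psi_{j+1}(x')\,\P^j_{X_{t_j}}(dx'),
  ~~\mbox{a.s.},
\end{equation*}
where $\P^j_x$ is marginalized onto the $X_{t_{j+1}}$-coordinate; by Doob--Dynkin this is a Borel function of $X_{t_j}$. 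Taking the maximum with $Y_{t_j}=y_j(X_{t_j})$ gives $Z^\ell_{t_j}=\phi_j(X_{t_j})$ a.s. with $\phi_j$ Borel, which closes the induction and yields the asserted $\sigma(X_{t_j})$-measurability up to completion of the $\sigma$-field.

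The step requiring the most care is the passage from ``measurable with respect to the $\P$-completion of $\sigma(X_{t_j})$'' to a genuine Borel representation $\phi_j(X_{t_j})$ that can be fed back into the next induction step: this is exactly where the regular conditional kernels $(\P^j_x)_{x\in\R^d}$, together with the Doob--Dynkin lemma, are needed, rather than only the abstract Markov property. A secondary point is that the backward recursion and the reduction to $\pi$-valued stopping times must be carried out so that the essential supremum and its null-set exceptions remain consistent across the $m+1$ induction steps; the integrability furnished by Assumption \ref{assum:Markovian} ensures that each conditional expectation above is well defined.
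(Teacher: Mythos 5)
Your proposal is correct and follows essentially the same route as the paper: the backward dynamic programming recursion $Z^\ell_{t_j} = \max\{Y_{t_j},\, \E[Z^\ell_{t_{j+1}} + c^A_{j+1} f(t_{j+1},X_{t_{j+1}},\ell) \,|\, \Fc_{t_j}]\}$ combined with a backward induction powered by the Markov property. Your additional care about the reduction to $\pi$-valued stopping times and the Doob--Dynkin representation via the kernels $(\P^j_x)$ only makes explicit what the paper leaves implicit.
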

	\begin{proof}
	Recall that $\mu^{A, \pi}$ is sum of Dirac measures on $\pi$, then by the dynamic programming principle, one has $Z^\ell_{t_m} = Y_{t_m}$ a.s., and
	$$
		Z^\ell_{t_j}
		=
		\max \Big\{
			\E \big[Z^\ell_{t_{j + 1}} + c^{A}_{j + 1}f(t_{j + 1},X_{t_{j + 1}},\ell) \big|\Fc_{t_j} \big],
			~Y_{t_{j}}
		\Big\}
		~\mbox{a.s.}, ~j=0, \cdots, m-1.
	$$
	It is then enough to apply the induction argument.
	First, $Z^\ell_{t_m} = Y_{t_m}$ is $\sigma(X_{t_m})$-measurable as $t_m = T$.
	Next, assume that that $Z^\ell_{t_{j + 1}}$ is $\sigma(X_{t_{j + 1}})$ measurable,
	then by the Markov property of $X$, one has
	$$
		\E \big[ Z^\ell_{t_{j + 1}} + c^{A}_{j + 1}f(t_{j + 1},X_{t_{j + 1}},\ell) \big| \Fc_{t_{j}} \big]
		~\mbox{is}~
		\sigma(X_{t_j}) \mbox{-measurable}.
	$$
	Thus $Z^\ell_{t_{j}}$ is also $\sigma(X_{t_{j}})$-measurable.
	\end{proof}

	\vspace{0.5em}
	
	Next, given $Y \in \Yc^{\pi}_m$, let us define
	\begin{equation} \label{eq:def_LY_pi}
		L^Y_{t_j}
		~:=~
		\sup \big\{\ell:Z^\ell_{t_j} = Y_{t_j} \big\};
	\end{equation}
	and given $L \in  \Lc^{\pi,+}_m$, we define
	\begin{equation}\label{eq:LdefinedYdiscreteMarkov}
		Y^L_{s}
		~:=~
		\E \Big[\xi(X_T) + \int_{(s,T]}
		f\Big(t, X_t, L_{t-} \Big)\mu^{A, \pi} (dt) \Big| \Fc_s\Big],
		~\mbox{a.s.}
	\end{equation}
	
	\begin{lemma}\label{Equiv:LandYmeasurable}
		Let Assumption \ref{assum:Markovian} hold true.
		Then given $Y \in \Yc^{\pi}_{m}$, one has $L^Y \in \Lc^{\pi}_m$.
		Moreover, given $L \in \Lc^{\pi,+}_m$, one has $Y^L \in \Yc^{\pi}_{m}$.
	\end{lemma}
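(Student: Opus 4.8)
The plan is to establish the two assertions separately, in each case deriving the Markovian (i.e.\ measurability) structure from the dynamic programming recursion of Lemma \ref{SnellenvelopMarkovian} together with the measurable family of kernels $(\P^j_x)_{x \in \R^d}$. Throughout I use that $\mu^{A,\pi}$ charges only the grid points, so all path integrals reduce to finite sums over $\pi$.

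I would dispose of the second claim first, as it is the more direct one. The general discrete-time construction already guarantees that $Y^L \in \Yc^\pi$ (optionality, piecewise-constancy on the grid, integrability and $Y^L_T = \xi$), so only the Markovian form $Y^L_{t_j} = y_j(X_{t_j})$ remains. Since $L \in \Lc^{\pi,+}_m$ is piecewise constant with $L_{t_k-} = L_{t_{k-1}} = l_{k-1}(X_{t_{k-1}})$, the representation \eqref{eq:LdefinedYdiscreteMarkov} rewrites as
$$
	Y^L_{t_j} = \E\Big[ \xi(X_T) + \sum_{k=j+1}^m c^A_k\, f\big(t_k, X_{t_k}, l_{k-1}(X_{t_{k-1}})\big) \,\Big|\, \Fc_{t_j} \Big].
$$
The random variable inside the conditional expectation is a Borel function of the finite path $(X_{t_j}, \dots, X_{t_m})$, so by the Markov property of $X$ it equals $y_j(X_{t_j})$ with $y_j(x) := \E^{\P^j_x}\big[\,\cdots\big]$; measurability of $x \longmapsto y_j(x)$ follows from the measurability of the kernel $x \longmapsto \P^j_x$ and the integrability granted by Assumption \ref{assum:Markovian}. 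Hence $Y^L \in \Yc^\pi_m$.

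For the first claim I would upgrade Lemma \ref{SnellenvelopMarkovian} from measurability for each fixed $\ell$ to \emph{joint} Borel measurability of the value in $(x,\ell)$. Running the backward recursion of that lemma against the kernels, set $z_m(x,\ell) := y_m(x)$ and
$$
	z_j(x,\ell) := \max\Big\{ \int_{\R^d}\big[ z_{j+1}(x',\ell) + c^A_{j+1} f(t_{j+1}, x', \ell) \big]\, Q^j_x(dx'),\; y_j(x) \Big\},
$$
where $Q^j_x$ denotes the $X_{t_{j+1}}$-marginal of $\P^j_x$. By backward induction each $z_j$ is jointly Borel in $(x,\ell)$: the base case is clear, and in the inductive step $f(t_{j+1},\cdot,\cdot)$ is jointly Borel (Borel in $x$ by Assumption \ref{assum:Markovian}, continuous and piecewise-affine in $\ell$ by \eqref{eq:f_interpolation}), while integration of a jointly measurable, integrable integrand against a measurable kernel preserves joint measurability. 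Lemma \ref{SnellenvelopMarkovian} then gives $Z^\ell_{t_j} = z_j(X_{t_j},\ell)$, and the monotonicity of $f$ in $\ell$ propagates through the recursion so that $\ell \longmapsto z_j(x,\ell)$ is nondecreasing (and in fact continuous, by dominated convergence).

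Finally, since $Z^\ell_{t_j} \ge Y_{t_j}$ always, monotonicity forces $\{\ell : z_j(x,\ell) = y_j(x)\}$ to be an interval unbounded below, whence $l_j(x) := \sup\{\ell : z_j(x,\ell) = y_j(x)\}$ obeys, for every $q \in \Q$,
$$
	\{ x : l_j(x) > q \} = \bigcup_{q' \in \Q,\, q' > q} \{ x : z_j(x,q') = y_j(x) \},
$$
a countable union of Borel sets; hence $l_j$ is Borel, $L^Y_{t_j} = l_j(X_{t_j})$, and membership of $L^Y$ in the admissible class (optionality, piecewise-constancy and range) is inherited from the general discrete-time Bank--El Karoui representation, so $L^Y \in \Lc^\pi_m$. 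The main obstacle is precisely this passage from the fixed-$\ell$ measurability of Lemma \ref{SnellenvelopMarkovian} to the joint measurability of $z_j(x,\ell)$ and the ensuing Borel-measurability of the threshold $l_j$; once the recursion is carried out with the kernels, the remaining steps are routine bookkeeping with the Markov property.
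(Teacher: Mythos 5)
Your proof is correct, and the second half (showing $Y^L\in\Yc^\pi_m$) is essentially the paper's argument: reduce the integral to a finite sum over grid points, observe the integrand is a Borel functional of $(X_{t_j},\dots,X_{t_m})$, and invoke the Markov property. For the first half the paper takes a softer route than yours. Rather than upgrading Lemma \ref{SnellenvelopMarkovian} to joint Borel measurability of a deterministic value function $z_j(x,\ell)$, the paper stays at the level of random variables: it uses the a.s.\ continuity and monotonicity of $\ell\longmapsto Z^\ell_{t_j}$ to write, for each \emph{fixed} $\ell$, the event $\{\ell > L^Y_{t_j}\}=\{Z^\ell_{t_j}>Y_{t_j}\}$, which lies in $\sigma(X_{t_j})$ because $Z^\ell_{t_j}$ (Lemma \ref{SnellenvelopMarkovian}) and $Y_{t_j}$ both do; ranging over rational $\ell$ gives $\sigma(X_{t_j})$-measurability of $L^Y_{t_j}$, and the Borel representative $l_j$ then comes for free from the Doob--Dynkin lemma. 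This sidesteps entirely what you identify as the ``main obstacle'' (joint measurability in $(x,\ell)$ and measurable integration against the kernels $\P^j_x$), which is extra work you must carry out but which the statement does not require. Your route is nonetheless sound --- the backward recursion for $z_j$ preserves joint Borel measurability, monotonicity in $\ell$ makes $\{\ell: z_j(x,\ell)=y_j(x)\}$ a down-set, and your countable-union identity for $\{l_j>q\}$ needs only that down-set structure --- and it has the side benefit of producing the explicit kernel recursion that the paper only deploys later, in the continuous setting of Lemma \ref{Equiv:LandYcontinuous}. One small caveat: integrability of $f(t_{j+1},\cdot,\ell)$ under $Q^j_x$ for \emph{every} $x$ is not guaranteed by Assumption \ref{assum:Markovian} (which only gives integrability under $\P$), so your dominated-convergence and integration steps should be read for $\P\circ X_{t_j}^{-1}$-a.e.\ $x$; this is harmless since the conclusion is only needed up to null sets, but it is worth stating.
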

	\begin{proof}
	$\mathrm{(i)}$ Given $Y \in \Yc^{\pi}_{m}$, it is easy to observe (see also the proof of Theorem \ref{thm:BErepresentationTheorem}) that
	$\ell \longmapsto Z^\ell_{t_j}$ is continuous a.s.
	It follows that
	\begin{equation*}
		\{\ell > L^Y_{t_j}\}
		=
		\{Z^\ell_{t_j} > Y_{t_j}\}
		=
		\bigcup_{\substack{r_m,r_k \in \Q\\ r_k < r_m}}
		\{Z^\ell_{t_j} > r_m\} \cap \{r_k > Y_{t_j}\}.
	\end{equation*}
	This is enough to prove that $L^Y_{t_j}$ is $\sigma(X_{t_j})$-measurable as $Y_{t_j}$ and $Z^{\ell}_{t_j}$ (for all $\ell \in \R$) are all $\sigma(X_{t_j})$-measurable.
	Therefore, $L^Y \in \Lc^\pi$.	
	
	\vspace{0.5em}
	
	\noindent $\mathrm{(ii)}$ Given $L \in \Lc^{\pi,+}_m$, it is easy to check that $Y^L$ is piecewise constant on each interval $[t_j, t_{j+1})$.
	Moreover, for  $t_j = s < t \le t_{j+1}$, one has $\sup_{v \in [0,t)} L_v = L_{t_j}$.
	Then it is enough to apply the Markovian property of $X$ to prove that $Y^L_{t_j}$ is $\sigma(X_{t_j})$-measurable.
	\end{proof}

	\begin{lemma}\label{Lemma:ConditionalContinuity}
		Let Assumptions \ref{assum:Markovian} and \ref{Assume:ContinuousLaw} hold true.
		Let $h:\R^d \times \R \to \R$ be such that
		$x \longmapsto h(x, \ell)$ is bounded continuous for all $\ell \in \R$ and
		$\ell \longmapsto h(x, \ell)$ is Lipschitz continuous uniformly in $x \in \R^d$.
		For a fixed $ j \in \{0, \cdots, m-1\}$, let us define $\hat{h}:\R^d \times \R \to \R$ by
		$$
			\hat{h}(x,\ell) ~:=~ \E \big[ h(X_{{t_{j + 1}}}, \ell) \big | X_{t_j} = x \big].
		$$
		Then $\hat h(x, \ell)$ is also bounded continuous in $x$, and Lipschitz in $\ell$ uniformly in $x$.

		\vspace{0.5em}
		
		Assume in addition that $\ell \longmapsto h(x, \ell)$ is strictly increasing and $\lim_{\ell \to \pm \infty}h(x, \ell) = \pm \infty$,
		then $\ell \longmapsto \hat{h}(x, \ell)$ is also strictly increasing and $\lim_{\ell \to \pm \infty}\hat{h}(x, \ell)  = \pm \infty$.
	\end{lemma}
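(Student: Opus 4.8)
The plan is to express $\hat h(x,\ell)$ as an integral against the one-step conditional law and then read off each property from the corresponding property of $h$. Writing $\E^{\P^j_x}$ for the expectation under the conditional law $\P^j_x$ of the future path given $X_{t_j}=x$, one has $\hat h(x,\ell)=\E^{\P^j_x}\big[h(X_{t_{j+1}},\ell)\big]$. Boundedness in $x$ for fixed $\ell$ is then immediate from $|\hat h(x,\ell)|\le \sup_y|h(y,\ell)|<\infty$, the last quantity being finite since $h(\cdot,\ell)$ is bounded.

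For the Lipschitz property in $\ell$, I would move the pointwise bound inside the expectation: for any $\ell,\ell'$,
$$
|\hat h(x,\ell)-\hat h(x,\ell')|\le \E^{\P^j_x}\big[|h(X_{t_{j+1}},\ell)-h(X_{t_{j+1}},\ell')|\big]\le K|\ell-\ell'|,
$$
with $K$ the Lipschitz constant of $h$; the bound is uniform in $x$ because the pathwise estimate is. For the continuity in $x$, fix $\ell$ and take $x_k\to x$. By Assumption \ref{Assume:ContinuousLaw}, $\P^j_{x_k}\to\P^j_x$ weakly; since $y\mapsto h(y,\ell)$ is bounded continuous and $X_{t_{j+1}}$ is a coordinate projection on the path space (hence continuous), the map $\omega\mapsto h(X_{t_{j+1}}(\omega),\ell)$ is bounded continuous, and the defining property of weak convergence yields $\hat h(x_k,\ell)\to\hat h(x,\ell)$.

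Under the additional hypotheses, strict monotonicity is direct: for $\ell<\ell'$ the integrand $h(X_{t_{j+1}},\ell')-h(X_{t_{j+1}},\ell)$ is strictly positive pathwise, so its integral against the probability measure $\P^j_x$ is strictly positive, whence $\hat h(x,\ell)<\hat h(x,\ell')$. The one genuinely delicate point is the divergence $\lim_{\ell\to\pm\infty}\hat h(x,\ell)=\pm\infty$, since the pointwise limit $h(y,\ell)\to+\infty$ need not be uniform in $y$ and so cannot be passed through the expectation by a crude bound. I would resolve this with the monotone convergence theorem: fixing $\ell_0$ and letting $\ell\uparrow\infty$ along $\ell_0+k$, the nonnegative increments $h(y,\ell_0+k)-h(y,\ell_0)$ increase to $+\infty$ for every $y$, so
$$
\hat h(x,\ell_0+k)-\hat h(x,\ell_0)=\E^{\P^j_x}\big[h(X_{t_{j+1}},\ell_0+k)-h(X_{t_{j+1}},\ell_0)\big]\longrightarrow+\infty,
$$
and monotonicity in $\ell$ upgrades this to the limit along the continuum. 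The case $\ell\to-\infty$ is symmetric, applying the argument to $h(y,\ell_0)-h(y,\ell)$. The main obstacle is thus concentrated in this last divergence step, which the monotone convergence argument circumvents without needing uniform divergence.
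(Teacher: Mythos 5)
Your proof is correct and follows essentially the same route as the paper: continuity in $x$ via weak convergence of $\P^j_{x_k}\to\P^j_x$ tested against the bounded continuous function $h(\cdot,\ell)$, Lipschitz and strict monotonicity by passing the pointwise estimates through the expectation, and the divergence $\hat h(x,\ell)\to\pm\infty$ via the monotone convergence theorem. Your treatment of the last step is in fact slightly more detailed than the paper's one-line invocation of monotone convergence, correctly noting why a uniform bound is unavailable and applying MCT to the nonnegative increasing increments instead.
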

	\begin{proof}
	\noindent $\mathrm{(i)}$
	Let us fixe $\ell \in \R$, $x \in \R^d$ together with a sequence $\{x_k\}_{k \ge 1} \subset \R^d$ s.t. $\lim_{k \to \infty}x_k = x$.
	By the continuity of $x \longmapsto \P_x^j$, it follows that
	\begin{align*}
		&
		\lim_{k \to \infty} \big| \hat{h}(x, \ell) - \hat{h}(x_k, \ell) \big|
		~=~
		\lim_{k \to \infty} \Big| \E^{\P_{x}^j} \big[h(X_{t_{j + 1}},\ell) \big] - \E^{\P_{x_k}^j} \big[h(X_{t_{j + 1}},\ell) \big] \Big|
		~=~
		0.
	\end{align*}
	Therefore, the function $x \longmapsto \hat{h}(x, \ell)$ is bounded continuous.
	Moreover, it is clear that $\ell \longmapsto \hat h(x, \ell) = \E^{\P_{x}^j} \big[h(X_{t_{j + 1}},\ell) \big] $ is also Lipschitz with the same Lipschitz constant of $x \longmapsto h(x, \ell)$.
	
	\vspace{0.5em}

	\noindent $\mathrm{(ii)}$ Let $x \in \R^d$ and $\ell_2 >  \ell_1$, one has
	\begin{align*}
		\hat{h}(x, \ell_2) - \hat{h}(x, \ell_1)
		~=~
		\E^{\P_{x}^j}[h(X_{t_{j + 1}},\ell_2) - h(X_{t_{j + 1}},\ell_1)]
		~>~
		0,
	\end{align*}
	so that $\ell \longmapsto \hat{h}(x, \ell)$ is strictly increasing.
	Finally, by the monotone convergence theorem, one obtains  $\lim_{\ell \to \pm \infty}\hat{h}(x, \ell) = \pm \infty$.
	\end{proof}

	\begin{lemma}\label{Equiv:LandYcontinuous}
		Let Assumptions \ref{assum:Markovian} and \ref{Assume:ContinuousLaw} hold true.

		\vspace{0.5em}

		\noindent $\mathrm{(i)}$
		For any $Y \in \Yc^\pi_{m,c}$, there exist mappings $l_j (\cdot) \in C_b(\R^d)$ such that for $j = 0, 1, \cdots, m$,
		$$
			L^Y_{t_j}  ~=~ l_j( X_{t_j}),
		$$
		where $L^Y_{t_j}$ is defined by \eqref{eq:def_LY_pi}, so that $L^Y \in \Lc^\pi_{m, c}$.

		\vspace{0.5em}

		\noindent $\mathrm{(ii)}$
		For any $L \in \Lc^{\pi,+}_{m,c}$, there exist mappings $ y_j (\cdot)\in C_b(\R^d) $ such that for $j = 0, 1, \cdots, m$,
		$$
			Y^L_{t_j} ~=~ y_j(X_{t_j}),
		$$
		where $Y^L$ is defined by \eqref{eq:LdefinedYdiscreteMarkov}, so that $Y^L \in \Yc^\pi_{m, c}$.
	\end{lemma}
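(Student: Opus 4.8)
The plan is to prove both statements by backward induction on $j$ from $m$ down to $0$, using Lemma \ref{Lemma:ConditionalContinuity} as the engine that transports spatial continuity and $\ell$-regularity through the one-step conditional expectations $\E[\,\cdot\,|X_{t_j}=x]$. Throughout I write $Z^\ell_{t_j}=z_j(X_{t_j},\ell)$, which is legitimate by Lemma \ref{SnellenvelopMarkovian}, and I carry the induction hypothesis that $x\longmapsto z_j(x,\ell)$ is bounded continuous for each $\ell$, while $\ell\longmapsto z_j(x,\ell)$ is nondecreasing and Lipschitz uniformly in $x$ (the two together yielding joint continuity of $z_j$).

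For part (i) I would start from the dynamic programming recursion recalled in Lemma \ref{SnellenvelopMarkovian}, i.e. $z_m(\cdot,\ell)=y_m(\cdot)$ (which is $\ell$-independent and lies in $C_b(\R^d)$, giving the base case) and
\begin{equation*}
z_j(x,\ell)=\max\big\{ C_j(x,\ell),\, y_j(x)\big\},\qquad C_j(x,\ell):=\E\big[\,z_{j+1}(X_{t_{j+1}},\ell)+c^A_{j+1}f(t_{j+1},X_{t_{j+1}},\ell)\,\big|\,X_{t_j}=x\big].
\end{equation*}
Applying Lemma \ref{Lemma:ConditionalContinuity} once to $z_{j+1}$ and once to the interpolation $f(t_{j+1},\cdot,\cdot)$ (which, by \eqref{eq:f_interpolation} and Assumption \ref{Assume:ContinuousLaw}, is bounded continuous in $x$ and Lipschitz, strictly increasing in $\ell$ with limits $\pm\infty$), the continuation value $C_j$ is jointly continuous, and \emph{strictly} increasing in $\ell$ onto $\R$; the strictness is inherited solely from the term $c^A_{j+1}\E[f(t_{j+1},\cdot,\ell)|X_{t_j}=x]$ since $c^A_{j+1}>0$. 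Taking the maximum with the $\ell$-independent $y_j$ then preserves bounded continuity in $x$ and uniform Lipschitz monotonicity in $\ell$, closing the induction. Since $z_j(x,\ell)=y_j(x)$ holds exactly when $C_j(x,\ell)\le y_j(x)$, and $C_j(x,\cdot)$ is a continuous strictly increasing bijection of $\R$, the quantity $l_j(x)$ in \eqref{eq:def_LY_pi} is the \emph{unique root} of $C_j(x,\ell)=y_j(x)$. Continuity of $l_j$ then follows from a sign-change argument: if $x_k\to x$, then for any $\eps>0$ joint continuity gives $C_j(x_k,l_j(x)+\eps)-y_j(x_k)\to C_j(x,l_j(x)+\eps)-y_j(x)>0$ and likewise $C_j(x_k,l_j(x)-\eps)-y_j(x_k)<0$ for large $k$, so by strict monotonicity the root $l_j(x_k)$ lies in $(l_j(x)-\eps,l_j(x)+\eps)$ eventually. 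As Lemma \ref{Equiv:LandYmeasurable} already gives $L^Y\in\Lc^\pi$, so that $l_j$ takes values in $[0,n]$ and is bounded, this yields $l_j\in C_b(\R^d)$ and hence $L^Y\in\Lc^\pi_{m,c}$.

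For part (ii) I would run a second, simpler backward induction directly on the recursion underlying \eqref{eq:LdefinedYdiscreteMarkov}: writing $Y^L_{t_j}=y_j(X_{t_j})$ and using that $L\in\Lc^{\pi,+}_{m,c}$ is nondecreasing so $L_{t_{j+1}-}=L_{t_j}=l_j(X_{t_j})$ with $l_j\in C_b(\R^d)$, the tower property gives
\begin{equation*}
y_j(x)=\E\big[\,y_{j+1}(X_{t_{j+1}})\,\big|\,X_{t_j}=x\big]+c^A_{j+1}\,\widehat f\big(x,l_j(x)\big),\qquad \widehat f(x,\ell):=\E\big[f(t_{j+1},X_{t_{j+1}},\ell)\,\big|\,X_{t_j}=x\big].
\end{equation*}
Assuming $y_{j+1}\in C_b(\R^d)$, the first term lies in $C_b(\R^d)$ by Assumption \ref{Assume:ContinuousLaw}, while Lemma \ref{Lemma:ConditionalContinuity} makes $\widehat f$ jointly continuous and bounded on $\R^d\times[0,n]$; hence $x\longmapsto\widehat f(x,l_j(x))$ is bounded continuous as a composition, so $y_j\in C_b(\R^d)$. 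The base case requires $\xi\in C_b(\R^d)$ (the natural terminal requirement ensuring $Y^L_T=\xi\in\Yc^\pi_{m,c}$), which I would include among the standing hypotheses for the continuous-contract case.

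The main obstacle is the well-definedness and continuity of the level $l_j$ in part (i). This hinges on the continuation value $C_j$ being \emph{strictly} increasing in $\ell$ — a property that the value $z_j$ itself does not share, since $z_j$ is flat in $\ell$ on the stopping region where $C_j\le y_j$. The whole argument therefore rests on isolating the continuation value inside the DPP and exploiting the strict monotonicity of the interpolated running reward $f$ after it is transported through the Markov kernel by Lemma \ref{Lemma:ConditionalContinuity}. The accompanying regularity bookkeeping (bounded continuity in $x$ plus \emph{uniform} Lipschitz continuity in $\ell$, which together deliver the joint continuity needed for the sign-change argument) must be maintained consistently at every step of the induction.
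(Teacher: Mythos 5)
Your proof is correct and follows essentially the same route as the paper's: backward induction through the dynamic programming recursion, Lemma \ref{Lemma:ConditionalContinuity} to transport bounded continuity in $x$ and uniform Lipschitz regularity in $\ell$ through the Markov kernel, identification of $l_j(x)$ as the unique root of the continuation-value equation, and continuity of the root via strict monotonicity plus joint continuity (your sign-change argument replacing the paper's limsup/liminf argument, to the same effect). Two of your side remarks are in fact sharper than the paper's write-up: the strict monotonicity and the limits $\pm\infty$ in $\ell$ do come solely from the $c^A_{j+1}\hat f$ term (the paper loosely asserts both for $\hat z^{\ell}_{j+1}$ as well, which fails since $Z^{\ell}$ is flat in $\ell$ on the stopping region and bounded below by $Y$), and the base case of part $\mathrm{(ii)}$ does require $\xi \in C_b(\R^d)$, which the stated assumptions leave implicit.
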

	\proof
	$\mathrm{(i)}$ Let $Y \in \Yc^\pi_{m, c}$.
	By definition, there exist bounded continuous mappings $y_j (\cdot):\R^d \longrightarrow \R$, such that $Y(t_j) = y_j (X_{t_j})$ for all $j = 0, \cdots, m$.
	Recall that $Z^{\ell}_{t_j}$ is defined in \eqref{eq:def_Zltj} and satisfies
	$$
		Z^\ell_{t_j}
		=
		\max \Big\{
			\E \big[Z^\ell_{t_{j + 1}} + c^{A}_{j + 1}f(t_{j + 1},X_{t_{j + 1}},\ell) \big|\Fc_{t_j} \big],
			~Y_{t_{j}}
		\Big\}
		~\mbox{a.s.}, ~j=0, \cdots, m-1.
	$$
	Then by Lemma \ref{Lemma:ConditionalContinuity}, together with a backward induction argument,
	there exist bounded continuous functions $z^\ell_j : \R^d \longrightarrow \R$ such that
	$$
		Z^\ell_{t_j}
		=
		z^\ell_j(X_{t_j}),
		~\mbox{for all}~
		j = 0, \cdots, m.
	$$
	
	To conclude the proof of $\mathrm{(i)}$, it is enough to prove that
	$$
		x
		~\longmapsto~
		l^Y_j (x)
		:=
		\sup \big\{\ell ~:z^\ell_j (x) = y_j (\cdot) \big\}
		\in C_b(\R^d).
        $$
        Let us define $\hat z^{\ell}_{j+1}$ and $\hat f(t_{j+1}, \cdot)$ by
        $$
		\hat z^{\ell}_{j+1}(x) ~:=~ \E \big[ z^{\ell}_{j+1}(X_{t_{j+1}}) \big| X_{t_j} = x \big],
		~\mbox{and}~
		\hat f(t_{j+1}, x, \ell) ~:=~  \E \big[ f(t_{j+1}, X_{t_{j+1}}, \ell) \big| X_{t_j} = x \big].
        $$
	By Lemma \ref{Lemma:ConditionalContinuity} again, the maps $(x, \ell) \longmapsto \hat z^{\ell}_{j+1}(x)$ and $(x, \ell) \longmapsto \hat f(t_{j+1}, x, \ell))$ are continuous,
	and strictly increasing in $\ell$.
	Moreover, it is easy to check that
	$$
		\lim_{l \to \pm\infty}\hat{f}({t_{j + 1}}, x, \ell)
		=
		\lim_{l \to \pm\infty}\hat{z}^\ell_{j + 1} (x)
		= \pm \infty.
	$$
	Therefore, $l^Y_j(x)$ is the unique $\ell \in \R$ such that
	\begin{equation}\label{Equation:LMarkov}
		\hat{z}^\ell_{j + 1} (x) + \hat{f} (t_{j + 1}, x,\ell)
		~=~
		y_j (x).
	\end{equation}
	Recall that  $ \ell \longmapsto \hat z^{\ell}_{j+1}(x)$ and $ \ell \longmapsto \hat f(t_{j+1}, x, \ell))$ are Lipschitz, uniformly in $x$, and $x \longmapsto l^Y_j(x)$ is bounded continuous.
	Let $(x_k)_{k \ge 1} \subset \R^d$ be such that $\lim_{k \to \infty}x_k = x \in \R^d$,
	then the sequence of real numbers $\{{l^Y_j} (x_k)\}_{k = 1}^\infty$ is uniformly bounded and
	\begin{align*}
		y_j (x)
		~ = ~
		\hat{z}^{\limsup_{k \to \infty}{l^Y_j} (x_k)}_{j + 1} (x) + \hat{f} \big(t_{j + 1},x, \limsup_{k \to \infty} l^Y_j (x_k) \big).
	\end{align*}
	Therefore
	$$
		\limsup_{k \to \infty} l^Y_j (x_k)
		=
		l^Y_j (x).
	$$
	Similarly, we can prove
	$$
		\liminf_{k \to \infty} l^Y_j (x_k)
		=
		l^Y_j (x),
		~~\mbox{and hence}~
		\lim_{k \to \infty} l^Y_j (x_k)
		=
		l^Y_j (x).
	$$

	\noindent $\mathrm{(ii)}$
	Let $L \in \Lc^{\pi, +}_{m,c,}$ and $j = 0, 1, \cdots, m$, notice that $x \longmapsto f(t_k,x,\ell)$ is bounded continuous  for each $k > j$, and $x \longmapsto l({t_k},x)$ is bounded continuous for each $k \geq j$.
	It is enough to apply Lemma \ref{Lemma:ConditionalContinuity} to show that $Y^L$ is a bounded continuous function of $X_{t_j}$.
	\endproof

	\vspace{0.5em}

	\proof \textbf{of Theorem \ref{thm:MarkovContinuousContract}.}
	The idea and procedure are the same as Theorem \ref{ReductionByBErepresentation}. Only subtle details differ, so we only point out the differences in the proof and omit the others.

	\vspace{0.5em}

	\noindent $\mathrm{(i)}$
	Let us first claim that
	\begin{equation}\label{Inequality:reduction-leqMarkov}
		V^{P,\pi}_m
		\leq
		\sup_{L \in \Lc^{\pi,+}_m}
		\E\bigg[\sum_{i=1}^{n} \bigg(
		\int_{[0,T]}\Big(g_i(t, X_{t\wedge \cdot})\mathds{1}_{\{L_{t-} < i\}}\mu^{P, \pi} (dt) - f\big(t, X_t, L_{t-}\big)\mathds{1}_{\{L_{t-} \geq i\}}\mu^{A, \pi} (dt) \Big) -  \xi \bigg) \bigg].
	\end{equation}
	Given any $Y\in \Yc^\pi_m$, we define by $L^Y$ by \eqref{eq:def_LY_pi}, so that
	$$
		Y_\tau
        		=
        		\E\Big[\int_{(\tau,T]}
        		f\Big(t,X_t, \sup_{s \in [\tau,t)}L_s \Big)
        		\mu^{A, \pi}(dt)
        		+ \xi \Big| \Fc^{X}_\tau\Big],
		~\mbox{a.s., for all}~
		\tau \in \Tc.
	$$
	Let
	$$
		\Lh^{Y}_t
		~:=~
		0 \vee \sup_{s\in [0,t)}L^Y_s \wedge n,
		~~~ t\in [0,T].
	$$
	By similar procedure in Theorem \ref{ReductionByBErepresentation}, it follows that for each $i = 1, \cdots, n$
	\begin{equation*}
		J^P_i(Y)
		~ \leq ~
		\E \Big[
			\int_{[0,T]} \Big( g_i(t, X_{t\wedge \cdot}) \mathds{1}_{\{\Lh^{Y}_t < i\}}\mu^{P,\pi}(dt) - f\big(t, X_t, \Lh^{Y}_t \big)\mathds{1}_{\{\Lh^{Y}_t \geq i\}}  \mu^{A,\pi}(dt)\Big) - \xi
		\Big].
	\end{equation*}
	Notice that Lemma \ref{Equiv:LandYmeasurable} implies that $L^Y \in \Lc^\pi_m$, then it is easy to verify that $\Lh^Y \in \Lc^{\pi,+}_m$.
	Finally, taking the sum over $i=1, \cdots, n$,
	we prove the inequality \eqref{Inequality:reduction-leqMarkov}.
	
	\vspace{0.5em}
	
	\noindent $\mathrm{(ii)}$
	We next prove the reverse inequality:
	\begin{equation}\label{Inequality:reduction-geqMarkov}
		V^{P,\pi}_m
		~\geq
		\sup_{L \in \Lc^{\pi,+}_m}\!
		\E\bigg[ \sum_{i=1}^{n}  \!\bigg(\!
		\int_{[0,T]} \!\!\Big(g_i(t, X_{t\wedge \cdot})\mathds{1}_{\{L_{t-} < i\}}\mu^{P, \pi} (dt) - f\big(t, X_t, L_{t-}\big)\mathds{1}_{\{L_{t-} \geq i\}}\mu^{A, \pi} (dt) \Big) -  \xi \bigg) \bigg].
	\end{equation}

	For each $L \in \Lc^{\pi, +}_m$, let $Y^L$ be defined by \eqref{eq:LdefinedYdiscreteMarkov}.
	Then Lemma \ref{Equiv:LandYmeasurable} gives that $Y^{L} \in \Yc^{\pi}_m$.
	So we can derive as in Theorem \ref{ReductionByBErepresentation} similarly the inequality
	\begin{align*}
		V^P_m
		~ & \geq ~
		\sum_{i=1}^n J^P_i(Y^L)
		\\
		~ & = ~
		\E\bigg[\sum_{i=1}^{n} \bigg(
		\int_{[0,T]}\Big(g_i(t, X_{t\wedge \cdot})\mathds{1}_{\{L_{t-} < i\}}\mu^{P,\pi}(dt) - f\big(t, X_t,L_{t-}\big)\mathds{1}_{\{L_{t-} \geq i\}}\mu^{A,\pi}(dt) \Big) - \xi \bigg) \bigg],
	\end{align*}
	and therefore the reverse inequality \eqref{Inequality:reduction-geqMarkov} holds.

	\vspace{0.5em}

	\noindent $\mathrm{(iii)}$
	For any $L \in \Lc^{\pi, +}_m$, let us define $L^0_t := [L_t]$.
	Then $L^0 \in \Lc^{\pi,+}_{m, 0}$, $L^0 \leq L$ and $\{L_{t-} \geq i\} = \{L^0_{t-}\geq i\}$ for any $t\in [0,T]$, $i = 1, \cdots, n$.
	Hence we have
	\begin{align*}
		&\E\bigg[\sum_{i=1}^{n} \bigg(
		\int_{[0,T]}\Big(g_i(t, X_t)\mathds{1}_{\{L_{t-} < i\}}\mu^{P,\pi}(dt) - f\big(t, X_t, L_{t-}\big)\mathds{1}_{\{L_{t-} \geq i\}}\mu^{A,\pi}(dt)\Big) - \xi \bigg)\bigg] \\
		\leq~ &\E\bigg[\sum_{i=1}^{n} \bigg(
		\int_{[0,T]}\Big(g_i(t, X_t)\mathds{1}_{\{L^0_{t-} < i\}}\mu^{P,\pi}(dt) - f\big(t, X_t,L^0_{t-}\big)\mathds{1}_{\{L^0_{t-} \geq i\}}\mu^{A,\pi}(dt) \Big) - \xi \bigg)\bigg].
	\end{align*}
	So we complete the proof of  $\mathrm{(i)}$ in the statement.

	\vspace{0.5em}

	\noindent $\mathrm{(iv)}$
	The second part can be proved in the same way as the first part using Lemma \ref{Equiv:LandYcontinuous} instead of Lemma \ref{Equiv:LandYmeasurable}.
\endproof

\section{Conclusion}
	{We have introduced an exit contract design problem in this work}, where the principal provides a universal exit contract to multiple heterogeneous agents.
	Under a technical monotone condition, {we have developed
	a systematic technique to transform the exit contract design problem} into an optimal control problem,
	which can be formulated equivalently as a multiple optimal stopping problem, and it can be considered
	as a first best problem.
	This general method can be easily adapted to the discrete-time setting, and for some variations of the problem where the contract is required to be a Markovian and/or continuous functional of the underlying process.
	
	\vspace{0.5em}
	
	{An interesting future topic would be the mean-field extension of the exit contract design problem},
	where the agents interact between each other and the number of agents turns to infinite.
	To achieve this, one should rely on a mean-field extension of the Bank-El Karoui's representation theorem.
	{Another interesting topic would be the application of} our general approach to study more concrete economic problems,
	such as the optimal subsidy/tax policy problem for the government,
	where one looks for a universal subsidy/tax policy for different electricity companies in order to encourage them to replace the traditional fossil-fuel electricity generators by green energy ones.

\appendix

\section{Appendix}

\subsection{The Snell envelop approach to the optimal stopping problem}

	We recall here  some basic facts in the classical optimal stopping theory.
	We also refer to El Karoui \cite{ElKarouiSF}, Appendix D in Karatzas and Shreve \cite{KaratzasShreve}, Theorem 1.2 in Peskir and Shiryaev \cite{PeskirShiryaev} for a detailed presentation of the theory.
	
	\vspace{0.5em}
	
	Let $(\Om,\Fc,\P)$ be a completed probability space equipped with the filtration $\F = \{\Fc_t\}_{t\in [0,T]}$ satisfying the usual conditions.
	Let $G = (G_t)_{t \in [0,T]}$ be an optional process in class $(D)$, we first consider the following optimal stopping problem
	\begin{equation} \label{eq:optimal_stopping_recall}
		\sup_{\tau \in \Tc} \E \big[ G_{\tau} \big].
	\end{equation}
	Let $\pi = (t_j)_{j=0, \cdots, m}$ be a partition of $[0,T]$, i.e. $0 = t_0 < \cdots < t_m = T$.

	\begin{theorem} \label{thm:SnellEnv}
		$\mathrm{(i)}$
        There exists a (unique) strong supermartingale $S = (S_t)_{t \in [0, T]}$ (which is l\`adl\`ag) such that
		$$
			S_{\tau}
            ~=~
            \esssup_{\sigma \in \Tc_{\tau}}
            \E \big[ G_\sigma \big| \Fc_\tau \big],
			~\mbox{a.s.}
		$$
		Moreover, a stopping time $\tauh$ is an optimal solution to \eqref{eq:optimal_stopping_recall} if and only if
		$$
			(S_{\tauh \wedge t})_{t \in [0,T]} ~
            \mbox{is a martingale, and}~
			S_{\tauh} = G_{\tauh}, ~\mbox{a.s.}
		$$
		
		\noindent $\mathrm{(ii)}$
		Assume in addition that, either $G$ is USCE, or $G$ satisfies $G_t = G_{t_j}$ for $t \in [t_j, t_{j+1})$, $j=0, \cdots, m-1$.
		Then there exist optimal stopping times to \eqref{eq:optimal_stopping_recall}, and the smallest one is given by
		$$
			\tauh
			~:=~
			\inf \big\{ t \ge 0 ~: S_t = G_t \big\}
			~=~
			\essinf \big\{ \tau ~: S_{\tau} = G_{\tau} \big\}.
		$$
		Moreover, when $G$ satisfies $G_t = G_{t_j}$ for $t \in [t_j, t_{j+1})$, $j=0, \cdots, m-1$, one has $\tauh \in \{t_0, \cdots, t_m\}$, a.s.
		
	\end{theorem}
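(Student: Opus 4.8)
The plan is to follow the classical Snell-envelope construction, adapted to the optional (possibly non-\cadlag) setting, treating parts (i) and (ii) in turn. Since the statement merely records standard optimal-stopping theory, I would keep the argument at the level of a sketch and defer the hardest aggregation step to the cited references.

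For part (i), I would first define the family of random variables $S_\tau := \esssup_{\sigma \in \Tc_{\tau}} \E[ G_\sigma \mid \Fc_\tau]$ indexed by $\tau \in \Tc$, and verify that for fixed $\tau$ the family $\{ \E[G_\sigma \mid \Fc_\tau] : \sigma \in \Tc_{\tau} \}$ is upward directed: given $\sigma_1, \sigma_2 \in \Tc_{\tau}$, setting $A := \{ \E[G_{\sigma_1}\mid\Fc_\tau] \ge \E[G_{\sigma_2}\mid\Fc_\tau]\}$ the stopping time $\sigma_1 \mathds{1}_A + \sigma_2 \mathds{1}_{A^c}$ dominates both. Hence the essential supremum is attained along an increasing sequence, and the class-$(D)$ hypothesis supplies the uniform integrability needed to pass to the limit and obtain the supermartingale identity $S_\tau = \esssup_{\sigma \in \Tc_{\tau}} \E[ S_\sigma \mid \Fc_\tau]$. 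The delicate point is to aggregate the consistent family $(S_\tau)_{\tau \in \Tc}$ into a single \emph{làdlàg} process; this is exactly where I would invoke the theory of strong supermartingales (the optional section theorem together with Mertens' regularization), which produces the unique \cadlag-type strong supermartingale whose values at stopping times coincide with $S_\tau$. This aggregation is the main obstacle, and it is the step carried out in full in El Karoui, Karatzas--Shreve and Peskir--Shiryaev.

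For the optimality characterization, the easy direction uses the supermartingale property: by optional sampling, $\E[S_0] \ge \E[S_{\tauh}] \ge \E[G_{\tauh}]$ for every $\tauh \in \Tc$, with equality throughout precisely when $(S_{\tauh \wedge t})_{t \in [0,T]}$ is a martingale (forcing the first inequality to be an equality) and $S_{\tauh} = G_{\tauh}$ (forcing the second). Since $\E[S_0] = \sup_{\tau} \E[G_\tau]$ by definition of $S_0$, these two conditions together are equivalent to optimality of $\tauh$.

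For part (ii), I would set $\tauh := \inf\{ t \ge 0 : S_t = G_t\}$ and check that it satisfies the two conditions above. The martingale property of $S$ on $[0,\tauh]$ follows from the minimality of $S$ as the smallest strong supermartingale dominating $G$: on $\{t < \tauh\}$ one has $S_t > G_t$, so the nondecreasing part in the Mertens decomposition of $S$ cannot charge $[0,\tauh)$, whence $(S_{\tauh \wedge t})_t$ is a martingale. The remaining identity $S_{\tauh} = G_{\tauh}$ is where the USCE hypothesis is essential: approximating $\tauh$ by the left and right sequences of Definition \ref{def:USC} and combining $\limsup_n \E[G_{\tau_n}] \le \E[G_{\tauh}]$ with the regularity of $S$ forces $S_{\tauh} = G_{\tauh}$ a.s. In the piecewise-constant case the whole argument collapses to the elementary backward recursion $S_{t_m} = G_{t_m}$ and $S_{t_j} = \max\{ G_{t_j}, \E[ S_{t_{j+1}} \mid \Fc_{t_j}]\}$, which manifestly yields a process constant on each $[t_j, t_{j+1})$ and a minimal optimal time valued in the grid, so no semicontinuity is needed. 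Finally, $\tauh = \essinf\{ \tau : S_\tau = G_\tau\}$ because any $\tau$ with $S_\tau = G_\tau$ dominates $\tauh$ by definition, while $\tauh$ itself lies in that set.
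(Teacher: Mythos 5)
The paper offers no proof of this theorem: it is recalled as classical optimal-stopping theory and deferred to El Karoui, Appendix D of Karatzas--Shreve, and Peskir--Shiryaev, and your sketch reproduces exactly the standard Snell-envelope argument those references carry out (the lattice/upward-directed property and Mertens aggregation into a l\`adl\`ag strong supermartingale for (i), the martingale-plus-contact characterization and the debut of $\{S = G\}$ under USCE, with the elementary backward recursion in the piecewise-constant case, for (ii)). Your proposal is correct and takes essentially the same route as the proofs the paper points to.
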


	Given a sequence  $\{G^i \}_{i=1}^n$ of optional process on $[0,T]$ in class $(D)$,
	we next consider the following multiple optimal stopping problem:
	\begin{equation}\label{eq:OptimalMultiStoppingRecall}
		Z_0
		~:=
		\sup \Big\{  \E \Big[\sum_{i = 1}^{n}G^i_{\tau_i} \Big] ~:  \{\tau_i\}_{i=1}^n \subset \Tc, ~ \tau_1 \leq \cdots \leq \tau_n \Big\}.
	\end{equation}

	\begin{theorem}\label{thm:OptimalMultiStoppingRecall}
		Assume that  the map $t \longmapsto \E \big[ G^i_t\big]$ is continuous for each $i=1, \cdots,n $.
		Then there exists an optimal solution $\{\tauh_i\}_{i=1}^n$ to the multiple stopping problem \eqref{eq:OptimalMultiStoppingRecall}.
	\end{theorem}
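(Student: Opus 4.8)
The plan is to solve the multiple stopping problem \eqref{eq:OptimalMultiStoppingRecall} by a backward recursion that reduces it to a nested family of single optimal stopping problems, each of which is handled by Theorem \ref{thm:SnellEnv}. I would set $R^{n+1} \equiv 0$ and, for $i = n, n-1, \ldots, 1$, define recursively the reduced reward process $\tilde G^i := G^i + R^{i+1}$ together with its Snell envelope
$$
	R^i_\tau ~:=~ \esssup_{\sigma \in \Tc_\tau} \E\big[ \tilde G^i_\sigma \big| \Fc_\tau \big], \qquad \tau \in \Tc.
$$
The point of the aggregation $\tilde G^i = G^i + R^{i+1}$ is that once the $i$-th stopping time is fixed at some $\tau$, the optimal remaining payoff of the later indices, all constrained to stop after $\tau$, is exactly $R^{i+1}_\tau$; each $\tilde G^i$ is again of class $(D)$ under the integrability hypotheses.

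First I would establish the reduction identity $Z_0 = \E[R^1_0]$. The inequality $Z_0 \le \E[R^1_0]$ is the easy direction: given any ordered family $\tau_1 \le \cdots \le \tau_n$, I would condition on $\Fc_{\tau_{n-1}}$ and bound $\E[G^n_{\tau_n} | \Fc_{\tau_{n-1}}] \le R^n_{\tau_{n-1}}$ (legitimate since $\tau_n \ge \tau_{n-1}$), fold $G^{n-1} + R^n$ into $\tilde G^{n-1}$, and iterate downward to reach $\E[\tilde G^1_{\tau_1}] \le \E[R^1_0]$. Taking the supremum over admissible families gives the bound. The reverse inequality will be produced simultaneously with the optimizer in the final step.

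The core of the proof, and the main obstacle, is to show that each reduced reward $\tilde G^i$ admits an optimal stopping time. For this I would verify that the regularity required by Theorem \ref{thm:SnellEnv} propagates through the Snell-envelope operation. Concretely, the key technical lemma is that if $t \mapsto \E[G^j_t]$ is continuous for every $j$, then $t \mapsto \E[R^{i+1}_t]$ is continuous as well, whence $t \mapsto \E[\tilde G^i_t]$ is continuous. This is the delicate point: taking a Snell envelope can in principle destroy the left/right regularity of a process, so one must argue that continuity of the reward in expectation forces the value process to be regular in expectation, so that the first-hitting time of its obstacle is genuinely optimal and not merely a candidate. Granting this lemma, Theorem \ref{thm:SnellEnv} yields at each level $i$ an optimal (indeed a smallest optimal) stopping time.

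Finally, I would assemble the optimizer. With $\hat\tau_0 := 0$, set $\hat\tau_i := \inf\{ t \ge \hat\tau_{i-1} : R^i_t = \tilde G^i_t\}$ for $i = 1, \ldots, n$. By construction these are ordered $\F$-stopping times, so admissibility is automatic. The martingale/optimality characterization in Theorem \ref{thm:SnellEnv}, applied at each level, gives $R^i_{\hat\tau_{i-1}} = \E[\tilde G^i_{\hat\tau_i} | \Fc_{\hat\tau_{i-1}}]$; chaining these identities turns every inequality in the reduction argument into an equality and yields $\E\big[\sum_{i=1}^n G^i_{\hat\tau_i}\big] = \E[R^1_0] = Z_0$. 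Hence $\{\hat\tau_i\}_{i=1}^n$ is an optimal solution, which also supplies the still-missing reverse inequality $Z_0 \ge \E[R^1_0]$ and thus closes the argument.
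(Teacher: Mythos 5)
Your proposal is correct and follows essentially the same route as the paper: the paper (treating $n=2$ and noting the general case is identical) defines the aggregated reward $J_{\tau_1} = \esssup_{\tau_2 \in \Tc_{\tau_1}}\E\big[G^2_{\tau_2}\big|\Fc_{\tau_1}\big] + G^1_{\tau_1}$, which is exactly your $\tilde G^1 = G^1 + R^2$, and then reduces the problem to a single optimal stopping problem by dynamic programming, just as in your backward recursion. The one step you explicitly grant without proof --- that continuity in expectation propagates through the Snell envelope operation --- is precisely the point where the paper invokes Propositions 1.5 and 1.6 of \cite{KobylanskiQuenezRouy-Mironescu2011}, so your identification of the delicate technical lemma coincides with the paper's treatment.
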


	\begin{proof}
        We will only consider the case $n = 2$, while the arguments for the general case are almost the same.

	\vspace{0.5em}

        Let $J$ be the optional process (see Theorem \ref{thm:SnellEnv} for its existence) such that
        $$
		J_{\tau_1} = \esssup_{\tau_2 \in \Tc_{\tau_1}}
		\E[G^2_{\tau_2}|\Fc_{\tau_1}]
		+ G^1_{\tau_1},
		~\mbox{a.s.}
	$$
	By Theorem \ref{thm:SnellEnv}, the above problem has an optimal solution $\tauh_2 = \tauh_2(\tau_1) \in \Tc_{\tau_1}$ for each $\tau_1 \in \Tc$.
	
	\vspace{0.5em}
	
	Further, by the dynamic programming principle, it is easy to check that
	$Z_0 = \sup_{\tau_1 \in \Tc}\E[J_{\tau_1}]$.
	At the same time, as $t \longmapsto \E[G^i_{\tau_i}] ~i = 1, 2$ are continuous,
	it follows by e.g. Proposition 1.6 and 1.5 in \cite{KobylanskiQuenezRouy-Mironescu2011} that the mapping $t \longmapsto \E[J_t]$ is also continuous.
	Therefore, there exists an optimal stopping time $\tauh_1 \in \Tc$ for the problem
	\begin{equation}\label{eq:n=2OptimalStoppingExistence}
		\E[J_{\tauh_1}] = \sup_{\tau_1 \in \Tc}\E[J_{\tau_1}],
	\end{equation}
	so that $(\tauh_1, \tauh_2(\tauh_1))$ is an optimal solution to \eqref{eq:OptimalMultiStoppingRecall}.
	\end{proof}

\subsection{Bank-El Karoui's representation of stochastic processes}

	We recall here the Bank-El Karoui's representation theorem for stochastic processes, with some slight modifications, together with some trivial extensions.
	Let us stay in the context with a completed probability space $(\Om,\Fc,\P)$,
	equipped with the filtration $\F = \{\Fc_t\}_{t\in [0,T]}$ satisfying the usual conditions.
	Let $\mu$ be a finite measure on $[0,T]$ and $h: [0,T] \x \Om \x \R \longrightarrow \R$.

	\begin{assumption}\label{Assumption:BErepresentation}
		For all $(t, \om) \in [0,T] \x \Om$, the map $\ell \longmapsto h(t,\om, \ell)$ is continuous and strictly increasing from $-\infty$ to $+\infty$.
		Moreover, for each $\ell \in \R$, the process $(t, \om) \longmapsto h(t, \om, \ell)$ is progressively measurable, and satisfies
		$$
			\E \bigg[\int_{[0,T]} \big| h(t, \ell) \big| \mu(dt) \bigg] < \infty.
		$$
	\end{assumption}

	\begin{theorem}[Bank-El Karoui's representation, \cite{BankKaroui2004, BankFollmer, Bank2018}] \label{thm:BErepresentationTheorem}
		Let Assumption \ref{Assumption:BErepresentation} hold true, $Y$ be an optional process of class (D) with $Y_T = \xi$ for some random variable $\xi$,
		and one of the following conditions holds true:
		\begin{itemize}
			\item either $\mu$ is atomless, and  $Y$ is USCE;
			
			\item or, for some $0 =t_0 < \cdots < t_m = T$ and $(c_j)_{j=1, \cdots, m}$ with $c_j > 0$, one has $\mu(dt) = \sum_{j=0}^m c_j \delta_{t_j}(dt)$,
			and $Y_t = Y_{t_j}$, $\Fc_t = \Fc_{t_j}$ for all $t \in [t_j, t_{j+1})$, $j=0, \cdots, m-1$.
		\end{itemize}
		Then,

		\vspace{0.5em}	
		
		\noindent $\mathrm{(i)}$ there exists an optional process $L : [0,T] \x \Om \longrightarrow \R$, such that
		\begin{equation*}
			\E\bigg[\int_{(\tau,T]} \Big|h\Big( t, \sup_{v\in [\tau,t)}L(v)\Big)\Big| \mu(dt) \bigg] < \infty,
			~~\mbox{for all}~ \tau \in \Tc,
		\end{equation*}
		and
		\begin{equation}\label{eq:BErepresentation}
			Y_\tau = \E\bigg[\int_{(\tau,T]} h\Big(t, \sup_{s \in [\tau,t)}L_s \Big) \mu(dt) + \xi \Big| \Fc_\tau\bigg],
			~\mbox{a.s., for all}~
			\tau \in \Tc.
		\end{equation}
		
		\noindent $\mathrm{(ii)}$  for any optional process $L$ gives the representation \eqref{eq:BErepresentation}, and any $\ell \in \R$,
		the stopping time
		\begin{equation} \label{eq:def_tau_l}
			\tau_{\ell} ~:=~  \inf \big\{ t\geq 0 ~: L_t \geq \ell \big\}
		\end{equation}
		is the smallest solution of the optimal stopping problem:
		\begin{equation}\label{OptimalStopping:BErepresentation}
			\sup_{\tau \in \Tc}
			~\E\bigg[Y_\tau + \int_{[0, \tau]}h(t, \ell) \mu(dt) \bigg].
		\end{equation}
	\end{theorem}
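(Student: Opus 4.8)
The plan is to treat the two standing cases in parallel through a family of optimal stopping problems indexed by the level $\ell$, reducing the atomless case to the original result of Bank and El Karoui \cite{BankKaroui2004} and giving a direct finite backward construction in the discrete case. For each fixed $\ell \in \R$, introduce the reward process $G^\ell_t := Y_t + \int_{[0,t]} h(s,\ell)\,\mu(ds)$ and consider the optimal stopping problem \eqref{OptimalStopping:BErepresentation}, whose conditional value is the Snell envelope $S^\ell$ of $G^\ell$. Under either hypothesis $G^\ell$ satisfies the assumptions of Theorem \ref{thm:SnellEnv}: in the atomless case it is USCE because $Y$ is and $t \mapsto \int_{[0,t]} h(s,\ell)\,\mu(ds)$ is continuous; in the discrete case it is piecewise constant on the grid. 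Hence Theorem \ref{thm:SnellEnv} supplies a smallest optimal stopping time $\tau_\ell := \inf\{ t \ge 0 : S^\ell_t = G^\ell_t\}$.

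The key construction is then to set
$$L_t := \sup\{\ell \in \R : S^\ell_t = G^\ell_t\},$$
i.e. the largest level for which stopping immediately at $t$ is optimal. I would first check the monotonicity that raising $\ell$ enlarges the running reward $h(\cdot,\ell)$, hence enlarges the continuation region, so that the stop-optimal set is a down-set $(-\infty, L_t]$; this gives $\{S^\ell_t = G^\ell_t\} = \{L_t \ge \ell\}$ and therefore $\tau_\ell = \inf\{t : L_t \ge \ell\}$, which is already the stopping-time formula \eqref{eq:def_tau_l} asserted in part (ii). To obtain the representation \eqref{eq:BErepresentation} I would exploit the martingale optimality conditions of Theorem \ref{thm:SnellEnv}: on $[0,\tau_\ell]$ the process $S^\ell$ is a martingale with $S^\ell_{\tau_\ell} = G^\ell_{\tau_\ell}$. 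Writing this martingale identity at a general $\tau \in \Tc$ for each level and integrating the level-indexed family over $\ell$ (a layer-cake/Stieltjes argument in $\ell$, using that $\sup_{s\in[\tau,t)}L_s \ge \ell$ precisely when $L$ crosses level $\ell$ somewhere in $[\tau,t)$, i.e. when the post-$\tau$ hitting time $\inf\{u \ge \tau : L_u \ge \ell\}$ is $<t$), and then taking conditional expectations given $\Fc_\tau$, recovers $Y_\tau$. The integrability requirement follows from the uniform bound controlling $h(t,\sup_{s}L_s)$ by $h$ evaluated at the extreme levels together with the integrability in Assumption \ref{Assumption:BErepresentation}. In the discrete case this is most transparent as a backward induction along the grid: one computes the value $Z^\ell_{t_j}$ one step at a time through the dynamic programming recursion and sets $L_{t_j} := \sup\{\ell : Z^\ell_{t_j} = Y_{t_j}\}$, exactly as in \eqref{eq:def_LY_pi}, so that the representation is verified grid point by grid point.

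For part (ii), given any optional $L$ realising \eqref{eq:BErepresentation} and any $\ell$, I would verify the two optimality criteria of Theorem \ref{thm:SnellEnv} for $\tau_\ell := \inf\{t : L_t \ge \ell\}$. Substituting the representation into the value process and using that $\sup_{s\in[\tau_\ell,t)}L_s \ge \ell$ on the relevant event (so that the sign of $h(t,\sup_s L_s) - h(t,\ell)$ is pinned down by the monotonicity of $h$ in its last argument) shows that $(S^\ell_{t\wedge \tau_\ell})_{t}$ is a martingale and that $S^\ell_{\tau_\ell} = G^\ell_{\tau_\ell}$, while the strict supermartingale behaviour before $\tau_\ell$ forces $\tau_\ell$ to be the \emph{smallest} optimal solution.

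The hard part is the representation \eqref{eq:BErepresentation} itself: justifying the interchange of the level integration with the conditional expectation and establishing the pathwise link between the running supremum $\sup_{s\in[\tau,t)}L_s$ and the level-crossing times $\tau_\ell$, after selecting the appropriate (right-continuous in $t$, suitably regular in $\ell$) version of $L$. This is exactly where the right-continuity of $\F$ and the USCE or piecewise-constant structure of $Y$ are needed, and it is precisely the content of the original Bank--El Karoui theorem \cite{BankKaroui2004} in the atomless case; I would cite it there and carry out the finite backward induction in the discrete case.
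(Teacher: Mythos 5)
Your proposal is correct and follows essentially the same route as the paper: the level-indexed Snell envelopes with $L_t:=\sup\{\ell: Z^\ell_t=Y_t\}$, delegation of the atomless representation to the original Bank--El Karoui theorem (with the cosmetic change of integration intervals justified by atomlessness), a backward induction along the grid in the discrete case, and a direct verification of the two Snell-envelope optimality criteria for $\tau_\ell$ in part (ii) using the monotonicity of $h$ in $\ell$. The paper's discrete induction carries out in detail the decomposition over the events $\{\tau^\ell_{t_j}=t_k\}$ and the continuity of $\ell\mapsto Z^\ell_{t_j}$ that your sketch invokes, but the underlying argument is the same.
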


	\begin{remark}
		$\mathrm{(i)}$
		The above statement differs slightly with the original theorem in \cite{BankKaroui2004}.
		First, we let $Y_T = \xi$ in place of $Y_T = 0$ in  \cite{BankKaroui2004}.
		More importantly, we change the interval of integration from $[\cdot,\cdot)$ to $(\cdot,\cdot]$, the term $\sup_{v \in [\cdot,\cdot]}L_v$ to $\sup_{v \in [\cdot,\cdot)}L_v$.
		The main motivation is to unify the presentation when we embed the discrete-time process in the continuous-time setting as a right-continuous and left-limit piecewise constant process.
		The proof stays almost the same as that in \cite{BankKaroui2004}, and for completeness, we provide a sketch of proof.

		\vspace{0.5em}

		\noindent $\mathrm{(ii)}$
		In \cite{BankKaroui2004}, it is proved that there exists an optional process $L$ providing the representation \eqref{eq:BErepresentation}, and the associated stopping time $\tau_{\ell}$ is the smallest solution to the optimal stopping problem \eqref{OptimalStopping:BErepresentation}.
		Such a process $L$ may not be unique.
		By (trivially) extending Theorem 2 of Bank and F\"ollmer \cite{BankFollmer}, we provide the additional fact that for any optional process $L$ giving \eqref{eq:BErepresentation}, the associated stopping time $\tau_{\ell}$ is the smallest solution to \eqref{OptimalStopping:BErepresentation}.
		In particular, the stopping time $\tau_{\ell}$ does not depend on the choice of $L$.

		\vspace{0.5em}

		\noindent $\mathrm{(iii)}$
		Although the process $L$ giving the representation \eqref{eq:BErepresentation} is not unique,
		it has a unique maximum solution in the following sense (see Theorem 2.16 of \cite{Bank2018}).
		For all $\tau \in \Tc$ and $\sigma \in \Tc_{\tau}$, let us denote by $l_{\tau,\sigma}$ the unique $\Fc_\tau$-measurable random variable satisfying
		$$
			\E \big[ Y_\tau - Y_\sigma \big| \Fc_\tau \big]
			~=~
			\E \Big[\int_{(\tau,\sigma]}h(t, \ell_{\tau,\sigma})\mu(dt) \Big| \Fc_\tau \Big],
			~\mbox{on}~\{\P[\mu([\sigma,\tau)|\Fc_\tau]>0\},
		$$
		and $\ell_{\tau,\sigma} = \infty$ on $\{\P[\mu([\sigma,\tau)|\Fc_\tau]=0\}$,
		and then define
		$$
			\widetilde L(\tau)
			~:=~
			\essinf_{\sigma \in \Tc_\tau}\ell_{\tau,\sigma}.
		$$
		Then $(\widetilde L( \tau))_{\tau \in \Tc}$ can be aggregated into an optional process $\widetilde L$,
		and it is the maximum solution to \eqref{eq:BErepresentation} in the sense that for any optional process $L$ satisfying the representation \eqref{eq:BErepresentation},
		one has $L_\tau \leq \widetilde L_\tau$ for all $\tau \in \Tc$.
	\end{remark}

	\proof{\bf of Theorem \ref{thm:BErepresentationTheorem}.}
	We will prove the theorem separately for the continuous setting (i.e. $\mu$ is atomless and $Y$ is USCE) and the discrete-time setting (i.e. $\mu$ is sum of Dirac measures).
	
	\vspace{0.5em}

	\noindent $\mathrm{(i.a)}$
	In the continuous-time setting where $\mu$ is atomless and $Y$ is USCE,
	one can assume $Y_T = 0$ by considering  the process $(Y_t - \E[Y_T|\Fc_t])_{t \in [0,T]}$ in place of $Y$.
	Then by \cite{Bank2018}, there exists an optional process $L : [0,T] \x \Om \longrightarrow \R$, such that, for all $\tau \in \Tc$,
	\begin{equation*}
		\E\bigg[\int_{[\tau,T)} \Big|h\Big( t, \sup_{v\in [\tau,t]}L(v)\Big)\Big| \mu(dt) \bigg] < \infty,
		~\mbox{and}~
		Y_\tau = \E\bigg[\int_{[\tau,T)} h\Big(t, \sup_{s \in [\tau,t]}L_s \Big) \mu(dt) \Big| \Fc_\tau\bigg].
	\end{equation*}

	As $\mu$ is atomless, one can change the interval of integration from $[\tau, T)$ to $(\tau, T]$.
	Moreover, for the term $\sup_{s \in [\tau,t]}L_s$, the fact that $t \mapsto \sup_{s \in [\tau,t]} L_s$ is increasing implies that
	$\sup_{s \in [\tau,t]}L_s = \sup_{s \in [\tau,t)}L_s$, $\mu(dt)$-a.s.
	We hence obtain \eqref{eq:BErepresentation}.

	\vspace{0.5em}

	\noindent $\mathrm{(i.b)}$
	In the discrete-time setting, the representation \eqref{eq:BErepresentation} can be proved easily by a backward induction argument.
	However, to unify the proof of Item $\mathrm{(ii)}$ in both continuous-time and discrete-time setting,
	we provide the proof of \eqref{eq:BErepresentation} as in the continuous-time setting.
	 When $\mu$ is sum of Dirac measures, we denote $\pi := (t_j)_{0 \leq j \leq m}$ of $[0,T]$ and write $\mu^\pi(dt) := \sum_{j = 1}^mc_j\delta_{t_j}(dt)$ instead of $\mu$ to remind the difference of the context.
	Let $Y$ be an optional process such that $Y_T = \xi$.
	We define the optional process $Z^\ell$ such that
	\begin{equation}\label{def:SnellEnvelop}
		Z^\ell_{\tau}
		~ := ~
		\esssup_{\sigma \in \Tc_{\tau}}
		\E\bigg[Y_\sigma  + \int_{(\tau,\sigma]}
		h(t, \ell)
		\mu^\pi(dt) \bigg| \Fc_{\tau}\bigg],
		~\forall~\tau \in \Tc,
	\end{equation}
	and further a family of stopping times $\tau^\ell$ as well as a process $L$:
	\begin{align*}
		\tau^\ell_{t}
		~ := ~
		\min\{s \geq t: Z^\ell_{s} = Y_{s}\},
		~
		L_t
		~ := ~
		\sup\{l \in \R:Z^\ell_{t} = Y_t\}.
	\end{align*}
	We will then prove that $L$ is the required optional process by a backward induction argument.

 	\vspace{0.5em}

	For a fixed $j \in \{0, 1, \cdots, m-1\}$, we assume that $L$ satisfies
	\begin{equation*}
		Y_{t_k} = \E\bigg[\xi + \int_{(t_k,T]}h(t,\sup_{v \in [t_k,t)}L_v)\mu^{\pi}(dt) ~\bigg|~ \Fc_{t_k}\bigg],
		~\mbox{for}~k = j + 1, \cdots, m.
	\end{equation*}
	We will prove that $Y_{t_j}$ can also be represented by $L$ as above.

 	\vspace{0.5em}

	Let us observe that
	$
		\{\tau^\ell_{t_j} < t_i\}
		~=~
		\big\{ \ell \leq \sup_{v \in [t_j, t_i)}L_v \big \},
	$
	which gives the following properties:
    \begin{align*}
		&
        \{\tau^\ell_{t_j} = {t_j}\}
		~ = ~
        \{\tau^\ell_{t_j} < t_{j + 1}\}
        ~ = ~
		\{ \ell \leq L_{t_j} \}
		,~
        \tau^{L_{t_j}+}_{t_j} = \tau^{L_{t_j}}_{t_j} = t_j,
        \\ &
        \{\tau^\ell_{t_j} = t_k\}
        ~ = ~
        \{t_k \leq \tau^\ell_{t_j} < {t_{k + 1}}\}
        ~ = ~
        \{\sup_{v\in [t_j,t_k)}L_v
            <
        \ell
            \leq
          \sup_{v \in [t_j,t_{k + 1})}L_v \}.
    \end{align*}
    and that on $
            H_{j,k} : =
            \{\sup_{v \in [t_j,t_k) }L_v
                        \leq
                L_{t_j}
                        <
            \sup_{v \in [t_j,t_{k + 1}) }L_v  \}$,
    \begin{align*}
        L_{t_j}
        ~ = ~
        \sup_{v \in [t_j,t) }L_v~\mbox{for}~t < t_{k+1},
        ~
        \sup_{v \in [t_k,t) }L_v
        ~ = ~
        \sup_{v \in [t_j,t) }L_v~\mbox{for}~t \geq t_{k + 1}.
    \end{align*}
    Now we claim that $\ell \longmapsto Z^\ell_i $ is continuous a.s. for each $i$.
    In fact, for any $\ell_1$, $\ell_2$ with $\ell_1 \leq \ell_2$,
    \begin{align*}
        Z^{\ell_2}_{t_j}
        ~ \geq ~
        Z^{\ell_1}_{t_j}
        ~ & \geq ~
        \E\bigg[Y_{\tau^{\ell_2}_{t_j}}  + \int_{(t_j,\tau^{\ell_2}_{t_j}]}
            h(t,\ell_1)
        \mu^\pi(dt)
        \Big| \Fc_{t_j}\bigg]
        \\
        ~ & \geq ~
        Z^{\ell_2}_{t_j}
        -
        \E\bigg[\int_{({t_j},T]}
            \Big|
                h(t,\ell_1)
                -
                h(t,\ell_2)
            \Big|\mu^\pi(dt)
        \bigg| \Fc_{t_j}\bigg].
    \end{align*}
    Then by the dominated convergence theorem, we have
    $$
        \lim_{\ell \to \ell_1}Z^{\ell}_{t_j} = Z^{\ell_1}_{t_j},
        ~\mbox{a.s.}
    $$

	On the other hand, it follows by the definition of $\tau^{\ell}_t$ and and the representation for $Y_{t_k}$, $k = j+1, \cdots, m$,
	together with the tower property that
    \begin{align*}
        Z^\ell_{t_j}
        ~ = & ~
        \E\bigg[
            Y_{t_j} \mathds{1}_{\{\tau^\ell_{t_j} = t_j\}}
                +
            \bigg(Y_{\tau^\ell_{t_j}} + \int_{(t_{j},\tau^\ell_{t_j}]}
                h(t,\ell)
            \mu^\pi(dt)\bigg)
            \mathds{1}_{\{\tau^\ell_{t_j} > t_j\}}
        \Big| \Fc_{t_j}\bigg]
        \\
        ~ = & ~
        \E\bigg[
            Y_{t_j}  \mathds{1}_{\{\tau^\ell_{t_j} = t_j\}}
                 +
            \bigg(
                \xi + \int_{(\tau^\ell_{t_j},T]}
                    h(t,\sup_{v \in [\tau^\ell_{t_j}, t)}L_v)
                    \mu^\pi(dt)
                +
                \int_{(t_{j},\tau^\ell_{t_j}]}
                    h(t,\ell)\mu^\pi(dt)
            \bigg)
            \mathds{1}_{\{\tau^\ell_{t_j} > t_j\}}
        \Big| \Fc_{t_j}\bigg]
        \\
        ~ = & ~
        \E\bigg[
            Y_{t_j} \mathds{1}_{\{L_{t_j} \geq \ell\}}
                  +
           \sum_{k = j + 1}^{m}
            \bigg(
                \xi + \int_{(t_k,T]}
                    h(t,\sup_{v \in [t_k, t)}L_v)
                    \mu^\pi(dt)
                \\ & \qquad \qquad \qquad \qquad \qquad \quad +
                \int_{(t_{j},t_k]}
                    h(t,\ell)\mu^\pi(dt)
            \bigg)
                \mathds{1}_{\{\tau^\ell_{t_j} = t_k\}}
        \Big| \Fc_{t_j}\bigg].
    \end{align*}
    Since $Z^{L_{t_j}}_{t_j} = Z^{L_{t_j}+}_{t_j}$ holds true by the continuity of $\ell \longmapsto Z^\ell_{t_j} $, we have that
    \begin{align*}
        Y_{t_j}
        ~ &  =  ~
        \lim_{\ell \to L_{t_j}+}
            \E\bigg[
                      \sum_{k = j + 1}^{m}
            \bigg(
                \xi + \int_{(t_k,T]}
                    h(t,\sup_{v \in [t_k, t)}L_v)
                    \mu^\pi(dt)
                +
                \int_{(t_{j},t_k]}
                    h(t,\ell)\mu^\pi(dt)
            \bigg)
                \mathds{1}_{\{\tau^\ell_{t_j} = t_k\}}
        \Big| \Fc_{t_j}\bigg]
        \\
        ~ & =   ~
        \E\bigg[
           \sum_{k = j + 1}^{m}
            \bigg(
                \xi + \int_{(t_k,T]}
                    h(t,\sup_{v \in [t_k, t)}L_v)
                    \mu^\pi(dt)
                +
                \int_{(t_{j},t_k]}
                    h(t,L_{t_j})\mu^\pi(dt)
            \bigg)
                \mathds{1}_{H_{j,k}}
        \Big| \Fc_{t_j}\bigg]
        \\
        ~  & = ~
        \E\bigg[
           \sum_{k = j + 1}^{m}
            \bigg(
                \xi + \int_{(t_k,T]}
                    h(t,\sup_{v \in [t_j, t)}L_v)
                    \mu^\pi(dt)
                +
                \int_{(t_{j},t_k]}
                    h(t,\sup_{v \in [t_j, t)}L_v)\mu^\pi(dt)
            \bigg)
                \mathds{1}_{H_{j,k}}
        \Big| \Fc_{t_j}\bigg]
        \\
        ~ & = ~
        \E\bigg[
           \sum_{k = j + 1}^{m}
            \bigg(
                \xi +
                \int_{(t_j,T]}
                    h(t,\sup_{v \in [t_j,t)}L_v)
                \mu^{\pi}(dt)
            \bigg)
                \mathds{1}_{H_{j,k}}
        \Big| \Fc_{t_j}\bigg]
        \\
        ~ & = ~
        \E\bigg[
            \xi +
            \int_{(t_j,T]}
            h(t,\sup_{v \in [t_j,t)}L_v)
            \mu^{\pi}(dt)
            \Big| \Fc_{t_j}\bigg].
    \end{align*}

\noindent $\mathrm{(ii)}$
    The optimal stopping theory implies that
    it is sufficient to prove
        $Z^{\ell}_{\tau_\ell} = Y_{\tau_\ell}$ a.s.
    and for any $0 \leq t < \tau_\ell$, $Z^{A,\ell}_{t} > Y_t$ a.s.,
    where $Z^{\ell}$ is defined as \eqref{def:SnellEnvelop}.
    We observe that, for each $\ell \in \R$, one has
	$$
		\{\tau_\ell < t\}
		~\subset~
		\Big\{\sup_{v\in [0,t)}L_v \geq \ell\Big\}
        ~=~
		\Big\{\sup_{v\in [\tau_i,t)}L_v \geq \ell\Big\}.
	$$
      While $Z^{\ell}_{\tau_\ell} \geq Y_{\tau_\ell}$ a.s. is trivial, to prove the reverse inequality, we have that
      \begin{align*}
        Z^{\ell}_{\tau_\ell}
        ~ & = ~
        \esssup_{\sigma \in \Tc_{\tau_\ell}}
            \E\bigg[Y_\sigma + \int_{(\tau_\ell,\sigma]}
                h(s,\ell)
            \mu(ds)\Big|\Fc_{\tau_\ell}\bigg] \\
        ~ & = ~
        \esssup_{\sigma \in \Tc_{\tau_\ell}}
            \E\bigg[ \xi + \int_{(\sigma,T]}
                h\Big(s,\sup_{v\in [\sigma,s)}L_v\Big)
            \mu(ds)
            +
            \int_{(\tau_\ell,\sigma]}
                h(s,\ell)
            \mu(ds)\Big|\Fc_{\tau_\ell}\bigg] \\
        ~ & \leq ~
        \esssup_{\sigma \in \Tc_{\tau_\ell}}
            \E\bigg[ \xi + \int_{(\sigma,T]}
                h\Big(s,\sup_{v\in [\tau_\ell,s)}L_v\Big)
            \mu(ds)
            +
            \int_{(\tau_\ell,\sigma]}
                h\Big(s,\sup_{v\in [\tau_\ell,s)}L_v\Big)
            \mu(ds)\Big|\Fc_{\tau_\ell}\bigg] \\
        ~ & = ~
            Y_{\tau_\ell}
        ~\mbox{a.s.}
      \end{align*}
      Then for any $0 \leq t < \tau_\ell$, we observe that
	  $$
      \Big\{\sup_{v\in[t,s)}L_v < \ell\Big\} \supset \{\tau_\ell > s\},
      $$
      which implies that on $\{t < \tau_\ell\} $,
      \begin{align*}
        Z^{\ell}_t
        ~ & = ~
        \esssup_{\sigma \in \Tc_t}
            \E\bigg[Y_\sigma + \int_{(t,\sigma]}
                 h(s,\ell)
           \mu(ds)\Big|\Fc_{t}\bigg]  \\
        ~ & \geq ~
            \E\bigg[ \xi + \int_{(\tau_\ell,T]}
                h\Big(s,\sup_{v\in [\sigma,s)}L_v\Big)
            \mu(ds)
            +
            \int_{(t,\tau_\ell]}
                h(s,\ell)
            \mu(ds)\Big|\Fc_{\tau_\ell}\bigg]  \\
        ~ & > ~
            \E\bigg[ \xi +  \int_{(t,T]}
                h\Big(s,\sup_{v\in [t,s)}L_v\Big)
           \mu(ds)\Big|\Fc_{t}\bigg] \\
        ~ & = ~
            Y_t,
        ~\mbox{a.s.}
      \end{align*}
      We hence conclude the proof.
	\endproof

\end{document}